\newcommand{\mclass}{\mathscr{M}_{holo}^{m,\alpha } (n,Q,r)}
\newcommand{\partialbar}{\bar{\partial}}
\newcommand{\zbar}{\bar{z}}
\newcommand{\Vol}{\mathrm{Vol}}
\newcommand{\sq}{\backslash}
\newtheorem{thm}{Theorem}[section]
\newtheorem{definition}[thm]{Definition}
\newtheorem{prop}[thm]{Proposition}
\newtheorem{coro}[thm]{Corollary}
\newtheorem{lmm}[thm]{Lemma}
\newtheorem{exap}[thm]{Example}
\theoremstyle{remark} 
\newtheorem*{rmk}{Remark}
\title{On the convergence rate of Bergman metrics}
\author{Shengxuan Zhou}
\address{Beijing International Center for Mathematical Research\\
Peking University\\
Beijing\\ 100871\\ China}
\email{zhoushx19@pku.edu.cn}
\begin{document}

\begin{abstract}
We study the convergence rate of Bergman metrics on the class of polarized pointed K\"ahler $n$-manifolds $(M,L,g,x)$ with $\Vol\left( B_1 (x) \right) >v $ and $|\sec |\leq K $ on $M$. Relying on Tian's peak section method \cite{tg1}, we show that the $C^{1,\alpha }$ convergence of Bergman metrics is uniform. In the end, we discuss the sharpness of our estimates.
\end{abstract}

\maketitle

\tableofcontents

\section{Introduction}

Let $(M,g )$ be an $n$-dimensional complete K\"ahler manifold, $L$ be a positive on $M$ equipped with a hermitian metric $h$ whose curvature form is $2\pi\omega $. The $L^2$ orthonormal basis of $H^0\left( M,L^m \right)$ will induce canonical embeddings $\varphi_m $ of $M$ into $\mathbb{C}P^{N_m -1} $, where $N_m = \mathrm{dim} H^0\left( M,L^m \right) $. The pullbacks of the $\frac{1}{m} -$multiple of Fubini-Study metrics $g_m = \frac{1}{m} \varphi_m^* g_{FS}$ are usually called Bergman metrics.

A natural question is to compare the Bergman metrics with the original K\"ahler metrics. In the pioneering work \cite{tg1}, Tian used his peak section method to prove that Bergman metrics converge to the original polarized metric in the $C^2$-topology. By the similar method, Ruan \cite{wdr1} proved that this convergence is $C^\infty $. Later, Zelditch \cite{sz1}, also Catlin \cite{cat1} independently, used the Szegö kernel to obtain an alternative proof of the $C^\infty $-convergence of Bergman metrics and they gave the asymptotic expansion of Bergman kernel, which is the potential of Bergman metric. This expansion can be also obtained by Tian's peak section method (see \cite{liuzql1}) and is often called Tian-Yau-Zelditch expansion. By using the heat kernel, Dai-Liu-Ma \cite{dailiuma1} gave another proof of the Tian–Yau–Zelditch expansion, and moreover, they also considered the asymptotic behavior of Bergman kernels on symplectic manifolds and Kähler orbifolds (see also Ma-Marinescu’s book \cite{mamari1}). There are many important applications of using Bergman metrics to approximate a given Kähler metric, for example, see \cite{don1}. 
      
In this paper, we study the problem on dependence of the convergence rate. We focus on Bergman metrics in this paper. For Bergman kernel, please view the discussion in \cite{sxz1}. Our first result is an estimate of the $C^1$-convergence rate of Bergman metrics stated as follows.

\begin{thm}
\label{thm1}
Let $(M,g)$ be a polarized K\"ahler manifold. Assume that there are constants $K,v >0$ such that $\left| \sec \right|\leq K$ on $M$ and $\Vol \left( B_{1}(x_0 ) \right) >v ,$ for $ x_0 \in M$. Then we have constants $m_0 =m_0 (K,v)\in\mathbb{N}$ and $C=C(K,v ) >0$ such that
$$ \sqrt{m} \left\Vert \nabla g_{m} (x_0 )  \right\Vert +m  \left\Vert g_{m} (x_0 ) -g(x_0 ) \right\Vert   \leq C ,\; \forall m>m_0 ,$$
where $||\cdot ||$ is the norm of tensors which induced by $g$.
\end{thm}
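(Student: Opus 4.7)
My plan is to adapt Tian's peak section method \cite{tg1}, carefully tracking the dependence of every constant so that only $K$ and $v$ appear in the final estimates. First, the two-sided sectional curvature bound together with $\Vol(B_1(x_0))>v$ yields, by Cheeger's injectivity radius estimate, a positive lower bound $\iota=\iota(K,v)$ on the injectivity radius at $x_0$. On $B_\iota(x_0)$ I work in K\"ahler normal coordinates $(z_1,\dots,z_n)$, in which the local K\"ahler potential has the expansion
$$
\psi(z)=|z|^2+\mathcal P(z,\bar z)+\mathcal R(z),
$$
where $\mathcal P$ is a quartic polynomial whose coefficients come from the Riemann tensor of $g$ at $x_0$ (hence bounded in terms of $K$), and $|\mathcal R(z)|\leq C(K)|z|^5$. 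That the remainder enters only at order $|z|^5$, with no pointwise control on higher derivatives, is precisely what will ultimately limit the convergence rate.

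For each multi-index $p$ with $|p|\leq 2$, I form an approximate peak section $\tilde S_p(z)=\chi(|z|/r_m)\,z^p\otimes e_L^{\otimes m}$ on $B_\iota(x_0)$, with $r_m=\sqrt{(\log m)/m}$ and $\chi$ a standard cutoff, and solve $\partialbar u_p=\partialbar\tilde S_p$ by H\"ormander's $L^2$ estimate (whose constant depends only on the curvature of $(L,h)$ and on $\iota$). The correction $u_p$ has size smaller than any negative power of $m$, so $S_p:=\tilde S_p-u_p\in H^0(M,L^m)$ is holomorphic and very close to the model. The Gram matrix $(\langle S_p,S_q\rangle)$ is computed by Gaussian integrals: under the rescaling $w=\sqrt m\,z$, the weight $e^{-m\psi(z)}$ becomes $e^{-|w|^2}(1+O(1/m))$ uniformly on $|w|\leq\sqrt{\log m}$, the $O(1/m)$ term coming from $\mathcal P$ and a further $O(m^{-3/2})$ correction coming from $\mathcal R$. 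Hence the Gram matrix equals its Hermite-integral model plus an $O(1/m)$ perturbation with constant depending only on $K$ and $v$, and its inversion produces a near-orthonormal family $\{\hat S_p\}$.

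Completing $\{\hat S_p\}$ to an $L^2$-orthonormal basis $\{\sigma_\alpha\}$ of $H^0(M,L^m)$ and using $z^p(x_0)=\delta_{p,0}$ together with $\partial_i z^p(x_0)=\delta_{p,e_i}$, one sees that only the sections with $|p|\leq 2$ contribute to $B_m(x_0)$, $\partial B_m(x_0)$, and $\partial\partialbar B_m(x_0)$ up to terms smaller than any negative power of $m$. The identity
$$
g_m-g=\frac{\sqrt{-1}}{2\pi m}\,\partial\partialbar\log B_m
$$
then gives $m\|g_m(x_0)-g(x_0)\|\leq C$ directly. For the gradient, one further derivative in $z$ brings an extra factor $\sqrt m$ out of the rescaling $w=\sqrt m\,z$ (the concentration scale of the peak sections), yielding $\sqrt m\,\|\nabla g_m(x_0)\|\leq C$. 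The main obstacle is the uniformity of the Gram-matrix step: with only $|\sec|\leq K$, the remainder $\mathcal R$ cannot be refined, and every error arising from H\"ormander, the cutoff tail, the Hermite-integral inversion, and the off-support contribution of the remaining basis elements must be shown to depend only on $K$ and $v$; the absence of a bound on $\nabla\mathrm{Rm}$ is exactly what forces the $C^1$ rate to be $1/\sqrt m$ rather than $1/m$.
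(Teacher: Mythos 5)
Your overall architecture (Cheeger injectivity bound, peak sections via H\"ormander, Gram matrix, adapted basis) is the same as the paper's, but there is a genuine gap at the very first analytic step, and it is exactly the step where the whole difficulty of the theorem lives. You assume K\"ahler normal coordinates in which the potential is $|z|^2+\mathcal P(z,\bar z)+\mathcal R(z)$ with $\mathcal P$ quartic, coefficients bounded by $K$, and $|\mathcal R(z)|\leq C(K)|z|^5$. Under the hypotheses $|\sec|\leq K$ and $\Vol(B_1(x_0))>v$ alone this expansion is not uniformly available: the $|z|^5$ remainder coefficient involves $\nabla\mathrm{Rm}$ (and a uniform $o(|z|^4)$ remainder would already require a uniform modulus of continuity for the curvature), neither of which is controlled — see the oscillating-curvature metrics of Example \ref{oscillation}, where $|\sec|$ stays bounded but the second-order jet of the metric has no uniform modulus. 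What the hypotheses actually give, via the holomorphic Cheeger--Gromov theory and Schauder interior estimates (Corollary \ref{secvol}, Proposition \ref{holochart}), is only $C^{1,\alpha}$ control of $g_{i\bar j}$ and $C^{3,\alpha}$ control of the frame potential $a$, i.e.\ $|\log a+\pi|z|^2|\leq C|z|^3$. With only that, your key quantitative claim — that on $|w|\leq\sqrt{\log m}$ one has $e^{-m\psi}=e^{-|w|^2}\bigl(1+O(1/m)\bigr)$ uniformly — fails: pointwise one only gets an error of size $m\cdot C|z|^3=O\bigl((\log m)^{3/2}m^{-1/2}\bigr)$, which is far too weak to conclude $m\|g_m(x_0)-g(x_0)\|\leq C$, and likewise too weak for the $O(1/m)$ off-diagonal Gram entries you need.

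The missing idea is how to recover $O(1/m)$ errors without a uniform second-order expansion of the metric. In the paper this is done by observing that every error term enters through integrals of $\log a+\pi|z|^2$ and $\det(g_{i\bar j})-1$ against rotationally structured weights, so only certain Fourier coefficients on the distinguished boundaries of polydiscs matter; these are estimated in Lemma \ref{ode} by an ODE argument whose input is the bound on the relevant Laplacians, i.e.\ on the curvature (via $R_{k\bar k i\bar j}=-\partial^2 g_{i\bar j}/\partial z_k\partial\bar z_k-\cdots$), not on pointwise Taylor remainders. This cancellation is what powers Lemmas \ref{ldlm}, \ref{inprodlmm} and \ref{adpeaksec}, and hence both the $1/m$ bound on $g_m-g$ and the $1/\sqrt m$ bound on $\nabla g_m$ (your closing scaling heuristic for the gradient also needs the adjacent-peak-section estimate of Lemma \ref{adpeaksec} to see the required cancellation; differentiation alone does not give it). If you add a hypothesis such as $|\nabla\mathrm{Rm}|\leq K$ your expansion and the rest of your outline become essentially Tian's argument with constants tracked, but as stated the proposal assumes away precisely the uniformity issue the theorem is about.
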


\begin{rmk}
We emphasize that the constants in the estimate are uniform for all $(M,g)$ satisfying the assumptions there. For a fixed $(M,g)$, Ruan (\cite{wdr1}) proved that $ \left\Vert g_{m} -g \right\Vert_{C^\infty } = O\left( \frac{1}{m} \right)  $. But in the uniform sense, even if we consider the $C^1 $ case, we cannot prove that the difference between metrics is $O\left( \frac{1}{m} \right)$. In fact, the $O\left(\frac{1}{\sqrt{m}} \right)$ rate in Theorem \ref{thm1} is sharp. See example \ref{sharp}.
\end{rmk}

Our second main result is about $C^{1,\alpha }$-convergence of Bergman metrics:

\begin{thm}
\label{thm2}
Let $(M,g)$ be a polarized K\"ahler manifold. Assume that there are constants $K,v >0$ such that $\left| \sec \right|\leq K$ on $M$, and $\Vol \left( B_{1}(x_0 ) \right) >v ,$ for $ x_0 \in M$. Then there are constants $r$ and $C$ depend only on $K$ and $v$, such that there exists a holomorphic chart $(z_1 ,\cdots ,z_n )$ containing $B_r (x_0 )$ with
$$ e^{-C} g_{\mathbb{C}^n } \leq g \leq e^{C} g_{\mathbb{C}^n } ,$$
and for each $\alpha \in (0,1)$, we have a constant $m_0 = m_0 (K,v ,\alpha )$, such that
$$  \left\Vert  g_{i\bar{j}, m} - g_{i\bar{j}}  \right\Vert_{C^{1,\alpha}}  \leq Cm^{\frac{-1+\alpha }{2}}  \left| \log(m) \right|^{\alpha} ,\; \forall m>m_0 ,$$
for $i,j=1,\cdots ,n$. Here $g_{\mathbb{C}^n } $ is the normal flat metric on $\mathbb{C}^n $, $\left\Vert  \cdot  \right\Vert_{C^{1,\alpha}}$ is the $C^{1,\alpha}$-norm on the chart, $g_{i\bar{j} ,m} = g_m \left( \frac{\partial }{\partial z_i} ,\frac{\partial }{\partial \zbar_j }  \right) $, and $g_{i\bar{j} } = g \left( \frac{\partial }{\partial z_i} ,\frac{\partial }{\partial \zbar_j }  \right) $.
\end{thm}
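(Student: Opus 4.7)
\noindent
The plan is to combine the pointwise $C^1$-estimate of Theorem~\ref{thm1}, applied at every point of a good holomorphic chart, with a second-order estimate carrying a logarithmic loss, and then conclude by H\"older interpolation.

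First I would produce the chart. The hypotheses $|\sec|\leq K$ and $\Vol(B_1(x_0))>v$ give an injectivity radius lower bound at $x_0$; standard bounded-geometry constructions (in the spirit of \cite{tg1}) then yield a holomorphic chart $(z_1,\ldots,z_n)$ whose image contains $B_r(x_0)$ for some $r=r(K,v)$, in which $e^{-C}g_{\mathbb{C}^n}\leq g\leq e^C g_{\mathbb{C}^n}$ and the Christoffel symbols of $g$ are uniformly bounded. Bishop--Gromov volume comparison propagates non-collapsing: $\Vol(B_1(y))\geq v'(K,v)>0$ for every $y\in B_r(x_0)$. Therefore Theorem~\ref{thm1} applies at every such $y$ and gives $|g_{i\bar j,m}(y)-g_{i\bar j}(y)|\leq C/m$ and $\|\nabla g_m\|(y)\leq C/\sqrt{m}$. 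Using $\partial(g_m-g)=\nabla g_m+\Gamma*(g_m-g)$ with bounded $\Gamma$, the covariant bound converts into a pointwise coordinate estimate $|\partial_k(g_{i\bar j,m}-g_{i\bar j})|(y)\leq C/\sqrt{m}$ throughout $B_r(x_0)$.

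The decisive step is a second-order bound of the form
\[
\sup_{B_{r/2}(x_0)}\big|\partial^2(g_{i\bar j,m}-g_{i\bar j})\big|\leq C\log m,
\]
which I would obtain by extending Tian's peak section analysis one derivative further: differentiating the Bergman-kernel representation $g_m-g=\frac{1}{m}i\partial\bar\partial\log B_m$ and controlling the result via second derivatives of peak sections, or equivalently by a rescaling argument on balls of radius $\sqrt{\log m/m}$, the concentration scale of the peak sections. The borderline integration at that scale is precisely what produces the logarithm. Granted this bound, with $u=\partial_k(g_{i\bar j,m}-g_{i\bar j})$, for $y_1,y_2\in B_{r/2}(x_0)$ at distance $d$ one has
\[
|u(y_1)-u(y_2)|\leq \min\!\Bigl(\tfrac{2C}{\sqrt{m}},\;Cd\log m\Bigr),
\]
and balancing at $d\sim(\sqrt{m}\log m)^{-1}$ yields
\[
[u]_{C^\alpha}\leq C\,m^{(-1+\alpha)/2}(\log m)^\alpha.
\]
Together with the $C^0$ bound $C/m$ and the $C^1$ bound $C/\sqrt{m}$ from the previous paragraph, this gives the asserted $C^{1,\alpha}$ estimate.

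The main obstacle is the second-order bound on $\partial^2(g_m-g)$. Theorem~\ref{thm1} supplies only a first-order pointwise control, so the $\log m$-type bound requires a genuinely new peak section computation (or an equivalent rescaling argument) that was not needed in Theorem~\ref{thm1}; tracking the logarithm through the borderline integrals at the scale $\sqrt{\log m/m}$ is the delicate point. The chart construction via bounded geometry, the propagation of the hypothesis by volume comparison, and the H\"older interpolation are essentially routine once that estimate is in hand.
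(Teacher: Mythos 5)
Your overall architecture (uniform chart, pointwise $C^1$ input from Theorem \ref{thm1} at every point of the chart, a second-order estimate with a logarithmic loss, then a two-scale H\"older interpolation at $d\sim (\sqrt{m}\log m)^{-1}$) is the same as the paper's, and your interpolation arithmetic is correct. But your ``decisive step'' is stated in a form that cannot be true uniformly: you claim $\sup_{B_{r/2}}\bigl|\partial^2\bigl(g_{i\bar j,m}-g_{i\bar j}\bigr)\bigr|\leq C\log m$ with $C=C(K,v)$ and $m_0=m_0(K,v,\alpha)$. The Bergman part of this is fine (and is what the paper proves: Lemma \ref{pppp} gives $|\partial_z\partial_z g_{k\bar l,m}|\leq C\log m$ and Lemma \ref{pppb} gives $|\partial_z\partial_{\bar z} g_{k\bar l,m}|\leq C$, via the adjacent peak-section inner products of Lemma \ref{adpeaksec}, which is where the logarithm really comes from, as you anticipate). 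The problem is the background metric: under only $|\sec|\leq K$ and a $C^{1,\alpha}$-bounded chart, the mixed derivatives $\partial_k\partial_{\bar l}g_{i\bar j}$ are controlled through the curvature formula, but the pure derivatives $\partial_k\partial_l g_{i\bar j}$ admit no uniform pointwise bound at all over the class. Since this quantity does not depend on $m$, and $m_0$ is uniform, for a manifold in the class with $\sup|\partial^2 g|$ huge your claimed bound fails already at $m$ slightly above $m_0$; there is no cancellation against $\partial^2 g_m$ to rescue it. Consequently the difference-quotient bound $\min\bigl(2Cm^{-1/2},\,Cd\log m\bigr)$ that your interpolation rests on is not available for $u=\partial_k(g_{i\bar j,m}-g_{i\bar j})$.

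The paper's fix, which your argument is missing, is to treat the two metrics asymmetrically: for $g_m$ use the pointwise second-order bounds above, while for $g$ one proves only a log-Lipschitz modulus of continuity for the gradient, $\bigl|\nabla g_{i\bar j}(x)-\nabla g_{i\bar j}(y)\bigr|\leq C|x-y|\bigl|\log|x-y|\bigr|$ (Lemma \ref{oric1a}), obtained from the Newtonian-potential/Poisson estimates of Lemmas \ref{newtpot} and \ref{poissonc1alpha} applied to $g_{i\bar j}$, whose Laplacian in the chart is bounded by the curvature hypothesis. Your balancing then goes through verbatim with the replacement
\begin{equation*}
|u(y_1)-u(y_2)|\;\leq\;\min\Bigl(2C m^{-1/2},\;Cd\log m + Cd\bigl|\log d\bigr|\Bigr),\qquad d=|y_1-y_2|,
\end{equation*}
since at the threshold $\delta=m^{-1/2}(\log m)^{-1}$ one has $d^{1-\alpha}|\log d|\leq 2m^{-(1-\alpha)/2}\log m$, so the extra term is absorbed into $Cm^{(-1+\alpha)/2}(\log m)^{\alpha}$. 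So the gap is not the peak-section computation you flag (that is indeed the paper's Lemmas \ref{adpeaksec}, \ref{fm}, \ref{pppp}, and you correctly identify it as new work), but the unjustified, and in fact unavailable, uniform second-order pointwise control of the original metric; replacing it by the log-Lipschitz gradient estimate for $g$ is a necessary additional step, not a routine one.
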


As a byproduct, we also obtain an estimate for the $W^{2,p}$ convergence. In addition, uniform $W^{2,1}$ convergence is not generally true (see Section \ref{exp}).

For the proof of Theorem \ref{thm1} and \ref{thm2}, we will use the holomorphic version of the Cheeger-Gromov convergence theory and Tian's peak section method. By Taylor expansion, we use the peak sections to approximate the holomorphic section with the largest norm at a given point. Since the second-order expansion is not uniform, we also need to estimate some special Fourier coefficients on distinguish boundaries of polydiscs.

This paper is organized as follows. In Section \ref{preliminaries}, we collect some preliminary results. We will give the method of constructing local coordinates in Section \ref{cuhc}. Then we will make some estimates about Tian's peak sections in Section \ref{tps}. We prove Theorem \ref{thm1} and \ref{thm2} in Section \ref{pe} and Section \ref{lhestm}. In Section \ref{exp}, we will give some examples about surfaces to illustrate what happens if some conditions are removed or if we consider faster convergence. For convenience, we save some details of it in Appendix \ref{HNoKM}. We also put some ODE estimates in Appendix \ref{pode}.

\vspace{0.2cm}

\textbf{Acknowledgement.} The author wants to express his deep gratitude to Professor Gang Tian for suggesting this problem and constant encouragement. He also thanks Zexing Li for reading the first version carefully, pointing out mistakes and typos, and giving many suggestions about writing the article.

\section{Preliminaries}
\label{preliminaries}
First, recall some basic notations in K\"ahler geometry. Let $\pi :L\to M$ be a holomorphic line bundle on the $n$-dimensional K\"ahler manifold $M$, and let $h$ be a hermitian metric on $L$. Then the curvature of $(L,h)$ is the $(1,1)$ form $Ric(h)= -\sqrt{-1} \partial\bar{\partial} \log h$. For each ample line bundle $L$ on $M$, we can find a hermitian metric $h$ on $L$ such that $\omega = \frac{1}{2\pi } Ric(h)>0$. Then we say that $(M,\omega )$ is a polarized K\"ahler manifold with polarized K\"ahler metric $\omega $. The $L^2$ orthonormal basis of $H^0\left( M,L^m \right)$ will induce canonical embeddings $\varphi_m $ of $M$ into $\mathbb{C}P^{N_m -1} $, where $N_m = \mathrm{dim} H^0\left( M,L^m \right) $. The pullbacks of the $\frac{1}{m} -$multiple of Fubini-Study metrics $\omega_m = \frac{1}{m} \varphi_m^* \omega_{FS}$ are usually called Bergman metrics. Let $\left\lbrace S^m_j \right\rbrace_{j=0}^{N_m -1} $ be a $L^2 $ orthonormal basis of $H^0 \left( M,L^m \right) $. Then it's obvious that
\begin{displaymath}
\omega_m = \frac{\sqrt{-1}}{2\pi m} \partial\partialbar \log\left( \sum_{j=0}^{N_m -1} \left| S^m_j (x) \right|^2 \right) .
\end{displaymath} 
We introduce the holomorphic version of Cheeger-Gromov $C^{m,\alpha }$-norm for K\"ahler manifolds now. 
\begin{definition}[Holomorphic Norms]
\label{holonorm}
Let $(M,g,x)$ be a pointed K\"ahler manifold. We say that the holomorphic $C^{m,\alpha}$-$norm\; on\; the\; scale\; of\; r$ at $x$: 
$$\left\Vert (M,g,x) \right\Vert^{holo}_{C^{m,\alpha} ,r} \leq Q,$$
provided there exists a biholomorphic chart $\phi : \left( B_r (0),0 \right) \subset \mathbb{C}^{n} \to   (U,x)\subset M$ such that

\begin{eqnarray}
& & |D\phi |\leq e^Q \textrm{ on } B_r (0) \textrm{ and } \left| D\phi^{-1} \right| \leq e^Q \textrm{ on } U. \\
& & \textrm{For all multi-indices } I\textrm{ with }0\leq |I|\leq m,\\
& & \quad\quad\quad\quad\quad r^{|I|+\alpha} \left\Vert D^{I}g_{i\bar{j}} \right\Vert_{\alpha} \leq Q.\nonumber
\end{eqnarray}

Globally we define
$$ \left\Vert (M,g) \right\Vert^{holo}_{C^{m,\alpha} ,r}=\sup_{x\in M} \left\Vert (M,g,x) \right\Vert^{holo}_{C^{m,\alpha} ,r}.$$

\end{definition}

Then we state the H\"omander's $L^2$ theory:
\begin{prop}
\label{l2m}
Let $(M,\omega )$ be a connected but not necessarily complete K\"ahler manifold with dim$M=n$. Assume that $M$ is Stein if it isn't compact. Let $(L,h)$ be a hermitian holomorphic line bundle, and let $\psi\in L^{1}_{loc}(M)$ be a weight function on $M$. Suppose that $$\sqrt{-1}\partial\bar{\partial} \psi +  Ric(\omega )+Ric(h) \geq \gamma\omega_{g} $$
for some positive continuous function $\gamma $ on $M$. Then for any $L$-valued $(0,1)$-form $\zeta\in L^{2}$ on $M$ with $\bar{\partial} \zeta =0$ and $\int_{M} ||\zeta ||^{2} e^{-\psi} dV_g $ finite, then there exists an $L$-valued function $u\in L^{2}$ such that $\partialbar u=\zeta$ and $$\int_{M} ||u||^{2} e^{-\psi } dV_{g} \leq \int_{M} \gamma^{-1} ||\zeta ||^{2} e^{-\psi} dV_{g} .$$ 
\end{prop}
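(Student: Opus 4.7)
The plan is to prove this by the standard Hörmander $L^2$-duality method. Set up the Hilbert spaces $H_0 = L^2(M,L,e^{-\psi}dV_g)$ of $L$-valued functions and $H_1,H_2$ of $L$-valued $(0,1)$- and $(0,2)$-forms with the same weight. Let $T\colon H_0 \to H_1$ and $S\colon H_1\to H_2$ be the densely defined closed extensions of $\bar\partial$. Since $\bar\partial\zeta=0$ we have $\zeta\in\ker S$. By the Riesz representation / Hahn–Banach argument, the existence of $u\in H_0$ solving $Tu=\zeta$ with the stated bound follows from the \emph{a priori} estimate
\begin{equation*}
\left|\langle\zeta,v\rangle_{H_1}\right|^2 \;\leq\; \left(\int_M \gamma^{-1}\|\zeta\|^2 e^{-\psi}dV_g\right)\cdot \|T^*v\|_{H_0}^2
\end{equation*}
for every $v\in\mathrm{Dom}(T^*)\cap\ker S$, together with a Cauchy–Schwarz splitting on $\mathrm{Dom}(T^*)\cap(\ker S)^\perp$ (where the pairing vanishes automatically).

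The heart of the matter is the Bochner–Kodaira–Nakano identity applied to the twisted bundle $L$ with the weight $e^{-\psi}$: for compactly supported smooth $L$-valued $(0,1)$-forms $v$ one has
\begin{equation*}
\|T^*v\|^2 + \|Sv\|^2 \;\geq\; \int_M \bigl\langle \bigl(\sqrt{-1}\partial\bar\partial\psi + \Ric(h)+\Ric(\omega)\bigr)\Lambda v,\,v\bigr\rangle\, e^{-\psi}dV_g,
\end{equation*}
where $\Lambda$ denotes contraction with $\omega$. The curvature hypothesis then gives the lower bound $\int_M \gamma\|v\|^2 e^{-\psi}dV_g$ on the right, and Cauchy–Schwarz yields $|\langle\zeta,v\rangle|^2 \leq (\int\gamma^{-1}\|\zeta\|^2 e^{-\psi})\cdot(\|T^*v\|^2+\|Sv\|^2)$, which on $\ker S$ reduces to the required \emph{a priori} bound.

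To pass from smooth compactly supported $v$ to all $v\in\mathrm{Dom}(T^*)\cap\mathrm{Dom}(S)$, one would ordinarily need geodesic completeness; since $M$ is only assumed Stein (when non-compact), I would instead exhaust $M$ by relatively compact strictly pseudoconvex sublevel sets $M_k=\{\rho<k\}$ of a smooth strictly plurisubharmonic exhaustion $\rho$. On each $M_k$ one can conformally modify the weight by $\psi+\chi_k(\rho)$ with $\chi_k$ a rapidly growing convex function so that the modified metric becomes complete near $\partial M_k$ and the positivity is preserved (even strengthened). Solve on each $M_k$ to obtain $u_k$ with a uniform $L^2$ bound, and extract a weak limit $u\in L^2(M,L,e^{-\psi}dV_g)$ with $\bar\partial u=\zeta$ and the claimed estimate.

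The main technical obstacle is precisely this completeness/domain issue: the formal identity gives the estimate only for test forms, and justifying it on the full Hilbert-space domain of $T^*\oplus S$ requires a careful density argument, which on a general Stein manifold is delivered by the exhaustion plus weight-modification trick above. Compact $M$ is simpler: no exhaustion is needed, and the density is automatic since smooth sections are dense in every Sobolev completion.
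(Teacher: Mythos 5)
The paper gives no proof of this proposition at all — it simply cites Demailly's book, and the argument there is exactly the standard H\"ormander--Andreotti--Vesentini $L^2$ scheme you outline: the functional-analytic a priori estimate on $\mathrm{Dom}(T^*)\cap\ker S$ with the weighted Cauchy--Schwarz against $\gamma$, the Bochner--Kodaira--Nakano inequality after twisting by $K_M^{-1}$ (which is what produces the $\mathrm{Ric}(\omega)$ term), and an exhaustion/limiting argument in the Stein case; so your proposal is essentially the same proof as the cited source. The one imprecision is your claim that replacing the weight $\psi$ by $\psi+\chi_k(\rho)$ ``makes the modified metric complete'': changing the weight never changes the metric, so at that step you must either add $\sqrt{-1}\partial\bar{\partial}\chi_k(\rho)$ to $\omega$ itself (the complete-K\"ahler-metric route, removing the modification at the end by monotonicity) or, alternatively, keep $\omega$ and use H\"ormander's original device of choosing the weights so that $|\bar{\partial}\eta_\nu|$ of a sequence of cutoffs $\eta_\nu=\eta_\nu(\rho)$ is absorbed, which yields density of compactly supported forms in the graph norm without any completeness.
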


The proof can be found in \cite{dm1}. By the theory of elliptic equations, we can choose the solution $u\in C^{k+2,\alpha}$(resp. $W^{k+2,p}$), if $\zeta\in C^{k+1,\alpha}$(resp. $W^{k+1,p}$), for $k\geq 0$.

In this paper, the notation $(M,g,J,x)$ means that a pointed K\"ahler manifold with K\"ahler metric $g$, complex structure $J$, and $x\in M$. For simplicity, we denote it by $(M,g,x)$ if we don't emphasize the complex structure.

\section{Construct Uniform Holomorphic Charts}
\label{cuhc}

Let $M$ be an $n$-dimensional algebraic manifold with a polarization $L$, and let $g$ be a polarized K\"ahler metric with respect to $L$, i.e., $\omega_g\in c_1 (L)\in H^2 (M,\mathbb{Z})$. Then there exists a hermitian metric $h$ on $L$ such that $ \frac{1}{2\pi} Ric(h)=\omega_g$. In order to use the Schauder interior estimates, we introduce some norms here.

\begin{definition}[Interior norms]
Let $U\subset \mathbb{R}^n $ be a domain, for each function $f:U\to \mathbb{C} $, we define
\begin{eqnarray*}
[f]^*_{k,0;U} & = & \sup_{x\in U} dist(x,\partial U)^{k} \left| \nabla^k f(x) \right| ,\\
{[f]}^*_{k,\alpha;U} & = & \sup_{x\neq y\in U} \min\left\lbrace dist(x,\partial U) ,dist(y,\partial U)  \right\rbrace^{k} \frac{ \left| \nabla^k f(x) - \nabla^k f(y) \right| }{|x-y|^\alpha } ,\\
\left\Vert f \right\Vert^{*}_{k,\alpha;U} & = & \sum_{j=0}^{k} [f]^*_{j,0;U} + {[f]}^*_{k,\alpha;U} ,
\end{eqnarray*}
where $k\in\mathbb{Z}_+ $, and $\alpha\in (0,1)$.
\end{definition}

We construct holomorphic charts with uniform size now. 

\begin{prop}
\label{holochart}
Let $(M,g,L,h)$ be given as at above, $x \in M$. Suppose that $r\in \left( 0,1 \right]$, $Q>0$ and $\left\Vert (M,g,x) \right\Vert^{holo}_{C^{1,\alpha} ,r} \leq Q $. Then there exists a holomorphic chart $\phi : B_{\delta r} (0 )\to B_r (x )\subset M$, which satisfies the conditions in Definition~\ref{holonorm} with constant $2Q$, and we can find a holomorphic frame $e_L$ of $L$ on $\phi \left( B_{\delta r}(0) \right)$ satisfying that
\begin{eqnarray*}
g_{i\bar{j}} (0) & = & \delta_{ij} ,\\
dg_{i\bar{j}} (0) & = & 0,\\
a(0) & = & 1, \\
 \frac{\partial^{|I|} a}{\partial z^I }  (0) & = & 0,\\
\left\Vert a \right\Vert^{* }_{3,\alpha ;B_{\delta  r}(0 )} & \leq & Cr^{2 } ,
\end{eqnarray*}
for each milti-index $I$ with $|I|\leq 3$, where $g_{i\bar{j}} =g\left( d\phi\left( \frac{\partial}{\partial z_i}\right),d\phi\left(\frac{\partial}{\partial \bar{z}_i}\right)\right)$,  $a=h(e_L ,e_L)$, $\delta$ and $C$ are positive constants depend only on $n,Q$, and $ \left\Vert \cdot \right\Vert^{*}_{k,\alpha;U } $ is the interior norm on the domain $U$.
\end{prop}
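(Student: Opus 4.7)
The plan is to first modify the chart by holomorphic changes of variable to normalize $g_{i\bar{j}}$ at the origin, and then to choose a holomorphic frame of $L$ which, after correction by the exponential of a holomorphic polynomial, yields the vanishing conditions on $a$. Starting from the chart $\phi_0\colon B_r(0)\to (U,x)$ supplied by the hypothesis $\left\Vert (M,g,x)\right\Vert^{holo}_{C^{1,\alpha},r}\le Q$, I would first apply a linear map $A\in GL_n(\mathbb{C})$ to obtain $g_{i\bar{j}}(0)=\delta_{ij}$; since $g_{i\bar{j}}(0)$ is positive-definite Hermitian with spectrum bounded between $e^{-2Q}$ and $e^{2Q}$, the map $A$ is bilipschitz with constants depending only on $Q$. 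Next I would apply a quadratic holomorphic change of variables $w^p=z^p+\tfrac{1}{2}\sum_{j,k}c^p_{jk}\,z^j z^k$ whose coefficients $c^p_{jk}$ are chosen so that the pushed-forward metric satisfies $\partial_k g_{l\bar{m}}(0)=0$; such coefficients exist and are symmetric in $(j,k)$ by the K\"ahler identity $\partial_k g_{l\bar{m}}=\partial_l g_{k\bar{m}}$, and after this change $\partialbar_k g_{l\bar{m}}(0)$ also vanishes by reality. Using the initial $C^{1,\alpha}$ bound, on $B_{\delta r}(0)$ with $\delta=\delta(n,Q)$ sufficiently small the composite is still a biholomorphism and the holomorphic $C^{1,\alpha}$-norm condition holds with constant $2Q$.

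For the frame, since $L$ restricted to the new (contractible) chart is holomorphically trivial, I would pick any holomorphic frame $e_0$ and set $a_0=h(e_0,e_0)$. From $\partial\partialbar\log a_0=-2\pi\omega_g$ with $C^{1,\alpha}$ right-hand side, Schauder interior estimates give $\log a_0\in C^{3,\alpha}$, so that $\partial^I\log a_0(0)$ is well-defined for $|I|\le 3$. Define the holomorphic polynomial
$$ f(z)=c_0+\sum_{1\le|I|\le 3}c_I z^I,\qquad c_I=-\tfrac{1}{I!}\partial^I\log a_0(0),\quad 2\,\mathrm{Re}(c_0)=-\log a_0(0), $$
and set $e_L:=e^f e_0$, so that $a=h(e_L,e_L)=e^{f+\bar{f}}a_0$ and $\log a=2\,\mathrm{Re}(f)+\log a_0$. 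Since pure $z$-derivatives annihilate $\bar{f}$, evaluating at the origin gives $a(0)=1$ and $\partial^I a(0)=0$ for $|I|\le 3$.

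Finally, for the interior $C^{3,\alpha}$ estimate I would analyze $\varphi:=\log a+2\pi|z|^2$. The identity $\partial_i\partialbar_j\log a=-2\pi g_{i\bar{j}}$ together with the normalizations $g_{i\bar{j}}(0)=\delta_{ij}$ and $dg_{i\bar{j}}(0)=0$ implies every partial derivative of $\varphi$ of order $\le 3$ vanishes at the origin: the mixed ones because $g-\delta$ vanishes to first order there, the pure holomorphic (or antiholomorphic) ones by the choice of $f$. Moreover $\partial\partialbar\varphi=-2\pi(g_{i\bar{j}}-\delta_{ij})\,dz^i\wedge d\bar{z}^j$ is pointwise $O(|z|^{1+\alpha})$ and $C^\alpha$-bounded, so Schauder estimates produce a uniform bound on $\Vert\varphi\Vert_{C^{3,\alpha}(B_{\delta r/2})}$. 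Combining this uniform bound with the order-$3$ vanishing at the origin via Taylor's theorem yields $|\nabla^k\varphi(z)|\le C|z|^{3-k+\alpha}$ for $k\le 3$, which once weighted by $\mathrm{dist}(\cdot,\partial B_{\delta r})^k$ gives $[\varphi]^*_{k,0}=O(r^{3+\alpha})$ and $[\varphi]^*_{3,\alpha}=O(r^3)$. Adding the explicit $-2\pi|z|^2$ term (whose interior norm on $B_{\delta r}$ is $O(r^2)$) and exponentiating produces the desired $\left\Vert a\right\Vert^*_{3,\alpha;B_{\delta r}(0)}\le Cr^2$. The main obstacle I anticipate is keeping the Schauder constants uniform under the sequence of coordinate and frame modifications so that the final constants depend only on $n$ and $Q$, and the $r^2$ scaling drops out cleanly from the Taylor-plus-elliptic argument.
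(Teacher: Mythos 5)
There is a genuine gap in the frame/potential step. Your coordinate normalization (linear map plus a quadratic holomorphic change of variables to get $g_{i\bar{j}}(0)=\delta_{ij}$, $dg_{i\bar{j}}(0)=0$) is fine and matches what the paper does tersely at the end of its proof. The problem is the claimed uniform bound on $\varphi=\log a+2\pi|z|^2$: you invoke Schauder interior estimates with only $\partial\bar{\partial}\varphi$ (equivalently $\Delta\varphi$) under control. Interior Schauder estimates have the form $\left\Vert \varphi \right\Vert^{*}_{3,\alpha}\leq C\bigl( \sup|\varphi| + \left\Vert \Delta\varphi \right\Vert^{*}_{1,\alpha}\bigr)$; they do not bound $\varphi$ by $\Delta\varphi$ alone, and nothing in your construction controls $\sup|\varphi|$. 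Indeed $\log a$ is only determined by the metric up to a pluriharmonic function, i.e.\ up to $2\,\mathrm{Re}(\psi)$ with $\psi$ holomorphic, coming from the arbitrary initial frame $e_0$. Your correction $e_L=e^{f}e_0$ with $f$ a holomorphic polynomial of degree $3$ kills the value and the pure derivatives at the origin up to order $3$, but it does nothing to pluriharmonic components of degree $\geq 4$ (e.g.\ $N\,\mathrm{Re}(z_1^4)$ with $N$ huge), which are invisible both to the origin normalization and to $\partial\bar{\partial}\varphi$. Hence the "uniform bound on $\Vert\varphi\Vert_{C^{3,\alpha}(B_{\delta r/2})}$ depending only on $n,Q$" does not follow, and the subsequent Taylor argument collapses. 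This is exactly the obstacle you flag at the end but do not resolve.

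The paper closes this gap before ever applying Schauder: it first constructs a \emph{particular} potential $f$ of $\omega_g$ with quantitative control, using the H\"ormander $L^2$-estimate (with Gaussian weight) together with a zig-zag argument, which yields $\int |f|^2 e^{-|z|^2}\, dV \leq C\epsilon^4 \int e^{-|z|^2}\, dV$ and then a sup bound $\sup|f|\leq C\epsilon^{3+\alpha}$; only then is the frame chosen (via a holomorphic function $\psi$ with $\mathrm{Re}(\psi)=f-\log a_0$, so that $\log h(e_1,e_1)=f$), and the Schauder interior estimate $\left\Vert f\right\Vert^{*}_{3,\alpha}\leq C\epsilon^2\left\Vert \Delta f\right\Vert^{*}_{1,\alpha}+C\sup|f|$ is applied with the sup term under control. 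To repair your argument you would need an analogous quantitative selection of the frame (equivalently of the potential), e.g.\ the H\"ormander-weighted construction of the paper or a minimal-$L^2$/mean-value normalization of the pluriharmonic ambiguity with bounds depending only on $n,Q$; the pointwise normalization at a single point up to finite order is not enough.
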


\begin{proof}
Without loss of generality, we can assume that $r=1$. By the definition of $\left\Vert (M,g) \right\Vert^{holo}_{C^{1,\alpha} ,r}$, we can find a holomorphic chart $$\phi_{0} :B_{\epsilon } (0 )\to B_r (x) $$ which satisfies the conditions in Definition~\ref{holonorm} with constant $2Q>0$. Replacing the ball $B_{\epsilon } (0 )$ by a polydisc $$U=D_{n^{-1} \epsilon } \left( 0 \right) \times ...\times D_{n^{-1} \epsilon } \left( 0 \right) =D_{n^{-1} \epsilon } \left( 0 \right)^{n},$$ then $H^{1}(U,\mathcal{O}^* ) = 0$ shows that $L\big|_U$ is a trivial bundle, and hence we can choose a holomorphic frame $e_0$ on a smaller ball $ B_{n^{-2} \epsilon} \left( 0 \right) \subset U$. Without loss of generality, we can assume that $L\big|_{B_\epsilon (0 )} $ is trivial, and $e_0 \in H^{0} \left( B_\epsilon (0 ) ) , L \right) $. If necessary, we will appropriately shrink $\epsilon $.

Combining the H\"ormander $L^2$-estimate and the zig-zag argument(see Proposition 8.5 in \cite{bot1}), we can find a real function $f\in C^{\infty} \left( B_\epsilon (0 )  \right) $ such that $$ \int_{B_\epsilon (0 ) } |f|^2 e^{-|z|^2 } dV_{\mathbb{C}^n } \leq C_0 \epsilon^4 \int_{B_\epsilon (0 ) } e^{-|z|^2 } dV_{\mathbb{C}^n } ,$$ where $C_0=C_0 (n)$ is a constant, and $\frac{-\sqrt{-1} }{2\pi } \partial\partialbar f = \omega_{g} $. By the $C^0 $ estimate of solution of Poisson's equation, we can assume that $$\sup_{ B_\epsilon (0 ) } \left| f \right| \leq C_1 \epsilon^{3+\alpha } ,$$ where $C_1 =C_1 (n) $ is a constant.

It is clear that $\partial\partialbar f = \partial\partialbar \log\left( a_0  \right) $, where $a_0 = h\left( e_0 ,e_0 \right) $. Then $ f - \log\left( a_0  \right) $ is a pluriharmonic function, and hence we have a holomorphic function $\psi $ which satisfies that $ Re(\psi ) =  f - \log\left( a_0  \right) $. Let $e_1 = e^{\frac{\psi}{2}} e_0 $, we have $\log\left( h(e_1 ,e_1) \right) = f $.

Next, Schauder's interior estimate(\cite{gilt1}, Theorem 6.2) implies that 
\begin{eqnarray*}
\left\Vert f \right\Vert^{*}_{3,\alpha ; B_\epsilon (0 ) } & \leq & C_{2} \epsilon^2 \left\Vert \Delta f  \right\Vert^{*}_{1, \alpha ; B_\epsilon (0 )} +C_3 \sup_{B\left( 0,\epsilon \right)}\left| f \right| \; \leq \;  C_{4}  \epsilon^{2 } ,
\end{eqnarray*}
where $C_2 ,C_3 , C_4 >0$ are constants depending only on $n $, $Q$.

By the K\"ahler conditions $d\omega_g =0$, we can make $B_{\delta r} (0)$ satisfy the equation $dg_{i\bar{j}} (0)=0$ through a biholomorphic mapping. Likewise, we can assume that $a$ satisfies the vanishing properties at $0\in B_{\delta r} (0) $.
\end{proof}

\begin{rmk}
If $\left\Vert (M,g,x) \right\Vert^{holo}_{C^{k,\alpha} ,r} \leq Q $ for some $k\in\mathbb{N} $, then we can assume that $\left\Vert a \right\Vert^{* }_{k+2,\alpha ;B_\delta (0 )}  \leq  C\delta^2$ and $\frac{\partial^{|I|} a}{\partial z^I }  (0) =0 $, for each multi-index $I$ with $|I|\leq k+2$.
\end{rmk}

\section{Estimate Tian's peak sections}
\label{tps}
In this section, we will make some estimates about Tian's peak sections.

Let $(M,g)$ be an $n$-dimensional algebraic manifold with K\"ahler metric $g$ and a polarization $(L,h)$ such that $\frac{1}{2\pi} Ric(h) = \omega_g$. Fix a local coordinate $\left( z_1 ,...,z_n \right)$ defined on an open neighborhood $U$ around $x_0 \in M$. Define $|z|=\sqrt{\sum_{j=1}^{n} |z_j|^2}$ for $z\in U$.

We assume that $\left\Vert (M,g,x_0 ) \right\Vert^{holo}_{C^{1,\alpha} ,r} \leq Q $ for some $r,Q>0$, $Ric(g)\geq tg$ for some $t\leq 0$, and the local coordinate is the one we constructed in Proposition \ref{holochart}. Now we can construct the peak sections.

\begin{lmm}[\cite{tg1}, Lemma 1.2]
\label{peaksec}
For an $n$-tuple of integers $P=\left( p_1 ,p_2 ,...,p_n \right)\in \mathbb{Z}_{+}^{n}$ and an integer $p'> p=\sum_{j=1}^{n} p_j $, then we can find constant $m_0$ which depends on $t$, $n$, $p$, $p'$, $Q$, and there exists another constant $C_0 $ which depends on $n$, $p$, $p'$, $Q$, such that for each $m>\max \left\lbrace m_0 , \frac{|\log(r)|^2 }{r^2} \right\rbrace $, there are sequences $a_m$ and $b_m $, smooth $L$-valued sections $\varphi_m $, and holomorphic global sections $S_m $ in $H^{0} \left( M,L^m \right)$ satisfying
\begin{eqnarray}
\int_M \left\Vert \varphi_{m} \right\Vert_{h^m}^{2} dV_g &\leq & \frac{C_0}{m^{8p' +2n}} ,\\
\int_{M} \left\Vert S_m \right\Vert_{h^m}^{2} dV_g & = & 1, \\
\int_{M\big\sq\left\lbrace |z|\leq \frac{\log(m) }{\sqrt{m}} \right\rbrace }\left\Vert S_m \right\Vert_{h^m}^{2} dV_g  & \leq &   \frac{C_0}{m^{2p'}}  ,
\end{eqnarray}
and locally at $x_0$,
\begin{align}
S_m (z)  =  \lambda_{\left( p_1 ,p_2 ,...,p_n \right)} \left( 1+ \frac{a_m}{m^{2p'}} \right) \left( z_1^{p_1} \cdots z_n^{p_n} +\varphi_m \right) e_L^m ,
\end{align}
where $||\cdot ||_{h^m}$ is the norm on $L^m$ given by $h^m$, $|a_m |\leq C_0 $, $\varphi_m $ is holomorphic on $\left\lbrace |z|\leq \frac{\log(m) }{\sqrt{m}} \right\rbrace$, and $||\varphi_{m} ||_{h^m} \leq b_m |z|^{2p'} $ on $U$, moreover
\begin{align}
\lambda_{\left( p_1 ,p_2 ,...,p_n \right)}^{-2} = \int_{\left\lbrace |z|\leq \frac{\log(m) }{\sqrt{m}} \right\rbrace } \left| z_1^{p_1} \cdots z_n^{p_n} \right|^2 a^m dV_g ,
\end{align}
where $dV_g = \left( \frac{\sqrt{-1}}{2}\right)^n \det\left( g_{i\bar{j}}\right) dz_1 \wedge d\zbar_1 \wedge \cdots \wedge dz_n \wedge d\zbar_n $ is the volume form.
\end{lmm}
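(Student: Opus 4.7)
The plan is to follow Tian's original construction \cite{tg1}. Let $\eta:\mathbb{R}_{\geq 0}\to[0,1]$ be smooth with $\eta\equiv 1$ on $[0,\tfrac12]$ and $\eta\equiv 0$ on $[1,\infty)$, put $\rho_m(z)=\eta(|z|\sqrt m/\log m)$, and define the smooth local section
$$
\tilde S_m(z)\;:=\;\rho_m(z)\,z_1^{p_1}\cdots z_n^{p_n}\,e_L^m,
$$
extended by zero to $M$. Then $\bar\partial \tilde S_m$ is supported in the annulus $A_m=\{\log m/(2\sqrt m)\leq |z|\leq \log m/\sqrt m\}$. The normalization of the holomorphic frame from Proposition \ref{holochart}, for which $\log a$ vanishes to order $\geq 3$ at $0$, gives $a^m\leq e^{-\pi m|z|^2(1-C|z|)}$ near $0$, so the pointwise norm $\|\bar\partial \tilde S_m\|_{h^m}$ on $A_m$ is bounded by a fixed power of $\log m/\sqrt m$ times the super-polynomially small factor $e^{-c(\log m)^2}$.

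The next step is to convert $\tilde S_m$ into a global holomorphic section by solving $\bar\partial u_m=\bar\partial\tilde S_m$ via Proposition \ref{l2m}, with a weight that forces $u_m$ to vanish to order $\geq p'+1$ at $x_0$. Let $\chi\in C_c^\infty$ equal $1$ on a fixed ball $B_{\delta r}(x_0)$ and be supported in the chart, and set $\psi_m(z):=2(n+p')\,\chi(z)\log|z|$. Away from $x_0$, $\sqrt{-1}\partial\bar\partial\psi_m\geq -C_1\omega_g$ with $C_1=C_1(n,Q)$, while $\Ric(h^m)=2\pi m\,\omega_g$ and $\Ric(g)\geq tg$ by hypothesis. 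For $m$ large enough (depending on $t,n,p',Q$), the positivity hypothesis of Proposition \ref{l2m} holds with $\gamma\geq\pi m$, yielding $u_m$ with
$$
\int_M\|u_m\|_{h^m}^2 e^{-\psi_m}\,dV_g\;\leq\;\frac{1}{\pi m}\int_{A_m}\|\bar\partial \tilde S_m\|_{h^m}^2 e^{-\psi_m}\,dV_g.
$$
Since $e^{-\psi_m}=|z|^{-2(n+p')}$ where $\chi\equiv 1$, finiteness of the left-hand side forces $u_m$ to vanish to order $\geq p'+1$ at $x_0$; combined with the super-polynomial smallness of $\bar\partial\tilde S_m$ on $A_m$, the right-hand side is dominated by any fixed negative power of $m$, in particular by $C_0 m^{-(8p'+2n)}$, giving the first displayed inequality.

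Setting $T_m:=\tilde S_m-u_m\in H^0(M,L^m)$ produces a holomorphic section which on $\{|z|<\log m/(2\sqrt m)\}$ has the form $(z^P+\varphi_m')e_L^m$ with $\varphi_m'$ holomorphic vanishing to order $\geq p'+1$; the pointwise bound $\|\varphi_m'\|_{h^m}\leq b_m|z|^{2p'}$ then follows by expanding $\varphi_m'$ in a convergent power series and applying Cauchy estimates to the weighted $L^2$ bound. The definition $\lambda_P^{-2}=\int_{|z|\leq \log m/\sqrt m}|z^P|^2 a^m\,dV_g$, together with Gaussian decay of $a^m$, gives a Laplace-type asymptotic $\int_M\|T_m\|_{h^m}^2 dV_g=\lambda_P^{-2}(1+O(m^{-2p'}))$, with the cross terms involving $u_m$ absorbed by the previous weighted estimate. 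Normalizing $S_m:=T_m/\|T_m\|_{L^2}$ yields the stated decomposition $S_m=\lambda_P(1+a_m/m^{2p'})(z^P+\varphi_m)e_L^m$ with $|a_m|\leq C_0$, and the concentration estimate (third displayed inequality) again comes from the Gaussian decay of $a^m$ outside $|z|\leq \log m/\sqrt m$.

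The main technical obstacle is the calibration of the singular weight $\psi_m$: it must be singular enough at $x_0$ to force vanishing to order $\geq p'+1$ of the correction $u_m$, yet not so singular that its negative distributional $\sqrt{-1}\partial\bar\partial$-contribution swamps the positivity $2\pi m\,\omega_g$ coming from $L^m$. Once $\psi_m=2(n+p')\chi\log|z|$ is in place, all polynomial-in-$m$ bounds in the lemma reduce to routine Gaussian integral estimates and Cauchy estimates on the holomorphic remainder.
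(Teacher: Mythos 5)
Your overall route is the same as the paper's: the paper proves Lemma \ref{peaksec} simply by running Tian's construction from \cite{tg1}, Lemma 1.2 inside the uniform chart and frame supplied by Proposition \ref{holochart}, which is exactly your scheme (cut-off times $z_1^{p_1}\cdots z_n^{p_n}e_L^m$, a $\bar{\partial}$-correction via Proposition \ref{l2m} with a weight singular at $x_0$, then normalization), so there is no divergence of method to report.

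There is, however, a genuine calibration error at precisely the point you single out as the main technical obstacle. With your weight $\psi_m=2(n+p')\chi\log|z|$, i.e. $e^{-\psi_m}=|z|^{-2(n+p')}$ near $x_0$, finiteness of $\int_M\|u_m\|^2_{h^m}e^{-\psi_m}dV_g$ only forces the holomorphic function $u_m/e_L^m$ to vanish at $x_0$ to order greater than $p'$: in $\mathbb{C}^n$ the relevant radial integral is $\int_0 \rho^{2k-2p'-1}d\rho$, finite exactly when $k>p'$. But the lemma asserts $\|\varphi_m\|_{h^m}\leq b_m|z|^{2p'}$, and if $\varphi_m$ has a nonzero Taylor coefficient of some degree $d$ with $p'+1\leq d<2p'$ (which your estimate does not exclude whenever $p'\geq 2$), then $\|\varphi_m\|_{h^m}/|z|^{2p'}\to\infty$ as $z\to 0$ because $a^{m/2}\to 1$ there, so no choice of the sequence $b_m$ can save the stated bound; the Cauchy-estimate step you invoke cannot manufacture vanishing that the weight did not enforce, and this order of vanishing is what the subsequent lemmas (e.g. Lemma \ref{inprodlmm} and Lemma \ref{adpeaksec}) actually use. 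The repair is exactly Tian's calibration: choose the weight so that $e^{-\psi_m}\sim|z|^{-2(n+2p')}$ near $x_0$, e.g. $\psi_m=2(n+2p')\chi\log|z|$ or Tian's $(n+2p')\,\eta\,\log\left(m|z|^2/\log^2 m\right)$, which forces $u_m$ to vanish to order at least $2p'+1$. Since $\log|z|$ is plurisubharmonic where $\chi\equiv 1$, the larger coefficient costs nothing: the hypothesis of Proposition \ref{l2m} still holds with $\gamma\geq\pi m$ for $m$ large depending on $t,n,p',Q$, and the remainder of your argument (super-polynomial smallness of $\bar{\partial}\tilde S_m$ on the annulus, the Laplace-type computation of $\int_M\|T_m\|^2_{h^m}dV_g$, and the concentration estimate) goes through unchanged.
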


\begin{proof}
Through the proof of Lemma 1.2 in \cite{tg1}, combined with Proposition \ref{holochart}, this lemma can be proved.
\end{proof}

So the estimation of Tian's Peak sections is reduced to the estimation of $\lambda_{\left( p_1 ,p_2 ,...,p_n \right)}$. Now we need to control some special Fourier coefficients on distinguish boundaries of polydiscs.

\begin{lmm}
\label{ode}
Let $f$ be a smooth real function on a domain that contains the closed polydisc $\bar{U}=\bar{D}(0,\delta )^n $, where $\delta >0$ be a constant.

$(i).$ If $k\in\mathbb{Z}_+ $, $f(0)= \nabla f(0)=0$, $|f|\leq K_1 $ for some $K_1 >0$, and $\left|\partial\partialbar f\right| \leq K_2 $ for some $K_2 >0$, then we have a constant $C>0$ which depends only on $\delta $, $n$, $k$, $K_1$ and $K_2$, such that
$$\int_{\prod_{j=1}^{n}\partial D(0,r_j )} f(z) \cos k\theta_1 d\theta_1 \wedge\cdots\wedge d\theta_n \leq Cr^{2} |\log(r)|\delta_{k,2} + Cr^2 ,$$
when $r=\sqrt{\sum_{j=1}^{n} r_j^2 }  < \frac{\delta }{2n} $, where $z_j = r_j e^{\theta_j \sqrt{-1}}$, and $\delta_{i,j}$ is the Kronecker symbol. In addition, when $k=0 $, we can assume that $C=C(n,K_2 )$.

$(ii).$ If $k\in\mathbb{Z}_+ $, $f(0)= \nabla f(0)= \nabla^2 f(0) =\nabla^3 f(0)=0$, $|f| \leq K_1 $ for some $K_1 >0$, and $\left|\frac{\partial^4 f}{\partial z_i\partial z_j\partial \zbar_s \partial \zbar_t} \right| \leq K_2 $ for some $K_2 >0$, $\forall i,j,s,t$, then we have a constant $C>0$ which depends only on $\delta $, $n$, $k$, and $K$, such that
$$\int_{\prod_{j=1}^{n}\partial D(0,r_j )} f(z) \cos k\theta_1 d\theta_1 \wedge\cdots\wedge d\theta_n \leq C \left( \delta_{k,2} +\delta_{k,4} \right) r^{4} |\log(r)| +Cr^4 ,$$
when $r=\sqrt{\sum_{j=1}^{n} r_j^2 }  < \frac{\delta }{2n} $, where $z_j = r_j e^{\theta_j \sqrt{-1}}$, and $\delta_{i,j}$ is the Kronecker symbol. In addition, when $k=0 $, we can assume that $C=C(n,K_2 )$.
\end{lmm}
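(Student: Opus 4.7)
The plan is to pass to Fourier coefficients in the angular variables, reduce to a one-variable ODE analysis at $r_2 = \cdots = r_n = 0$, and then propagate the resulting bound to arbitrary $r$ via the ODE structure in the remaining radial variables. Expand $f(r e^{i\theta}) = \sum_{K \in \mathbb{Z}^n} \hat a_K(r) e^{iK \cdot \theta}$, and set $K_0 := (k, 0, \ldots, 0)$. Since $f$ is real, $\hat a_{-K} = \overline{\hat a_K}$, and the integral in the lemma equals $(2\pi)^n \operatorname{Re} \hat a_{K_0}(r)$, so it suffices to estimate $|\hat a_{K_0}(r)|$. The Laplacian splits as $\Delta_{z_j} = \partial_{r_j}^2 + r_j^{-1} \partial_{r_j} + r_j^{-2} \partial_{\theta_j}^2$, acting on the $K_0$-mode as the Bessel-type operator $L_{K_{0,j}}$ where $L_m := \partial_r^2 + r^{-1}\partial_r - m^2/r^2$. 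Thus $\hat a_{K_0}$ satisfies $L_k \hat a_{K_0} = $ (Fourier coefficient of $\Delta_{z_1} f$) in $r_1$ for part (i), and the 4th-order analog $L_k^2 \hat a_{K_0} = $ (Fourier coefficient of $\Delta_{z_1}^2 f$) for part (ii); the right-hand sides are bounded by $4K_2$ and $16K_2$ respectively. In each $r_j$ with $j \geq 2$, the same analysis holds with $L_0$ (resp.\ $L_0^2$).

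Next, restrict to $r' := (r_2,\ldots,r_n) = 0$. The function $\tilde f(z_1) := f(z_1, 0, \ldots, 0)$ inherits the vanishing conditions and the pointwise bounds. Solve the ODE via Green's function: the bounded-at-$0$ homogeneous family is spanned by $\{r_1^{|k|}\}$ in (i) and by $\{r_1^{|k|}, r_1^{|k|+2}\}$ in (ii), plus a particular solution $u_p$. The identity $L_k(r^m) = (m^2 - k^2) r^{m-2}$ produces logarithmic resonances precisely when $m - 2$ coincides with a kernel exponent: for (i) this occurs at $|k| = 2$, giving $|u_p(r_1)| \leq C K_2 r_1^2 |\log r_1|$; for (ii) it occurs at $|k| = 2$ (via $L_2(r^2) = 0$) and at $|k| = 4$ (via $L_4(r^4) = 0$), giving $|u_p(r_1)| \leq C K_2 r_1^4 |\log r_1|$. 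For other $k$, the particular solution satisfies $C K_2 r_1^2$ (resp.\ $C K_2 r_1^4$) without the $\log$. The homogeneous coefficients are pinned down by two mechanisms: matching the vanishing conditions at $0$ (Taylor expansion of $\tilde f$) for small $|k|$, or the pointwise bound $|\tilde f(R)| \leq K_1$ at a fixed $R \sim \delta/(2n)$ for large $|k|$. For $k = 0$ the first mechanism suffices and avoids $K_1$, matching the remark in the lemma that $C = C(n, K_2)$ in this case.

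Finally, propagate from $r' = 0$ to general $r'$ using the ODE in each $r_j$, $j \geq 2$. For (i), multiply $L_0 \hat a_{K_0} = $ (bounded) by $r_j$ to obtain $(r_j \partial_{r_j} \hat a_{K_0})' = r_j \cdot (\text{bounded})$; integrating twice from $0$ yields $|\hat a_{K_0}(r) - \hat a_{K_0}(r_1, 0, \ldots, 0)| \leq C K_2 \sum_{j \geq 2} r_j^2 \leq C K_2 r^2$. Combining with the one-variable bound and the elementary estimate $r_1^2 |\log r_1| \leq r^2 |\log r| + C r^2$ for $r_1 \leq r < e^{-1/2}$ (obtained by maximizing $t^2 |\log(tr)|$ over $t \in (0, 1]$) finishes (i). For (ii) the analogous step uses the 2-parameter bounded-at-$0$ family $\{1, r_j^2\}$ for $L_0^2$: the $r_j^2$-coefficient is controlled by $|f| \leq K_1$ at $r_j \sim \delta/(2n)$, and the particular solution is $O(K_2 r_j^4)$, producing the claimed $r^4 |\log r|$ bound.

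The main obstacle is the careful handling of the 4th-order ODE in part (ii): the smooth-at-$0$ kernel has jumped from one to two dimensions, and the two resonant values $|k| \in \{2, 4\}$ must both be tracked. The bookkeeping around which vanishing conditions determine which coefficient (and with what dependence on $K_1$ versus $K_2$) is the most delicate piece; once this is organized, the Green's function estimates and the $n$-variable propagation are routine.
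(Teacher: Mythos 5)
Your part (i) is sound and follows essentially the same route as the paper: isolate the relevant angular mode, observe it satisfies the Euler/Bessel equation $L_k$ in $r_1$ driven by the (bounded) mode of $\Delta_{z_1}f$, identify the logarithmic resonance at $|k|=2$, pin the homogeneous coefficients by the Taylor vanishing at $0$ (small $|k|$) or by $|f|\leq K_1$ at radius comparable to $\delta$ (large $|k|$), and then propagate in the remaining radii through the $L_0$ equation, which costs only $O(K_2 r_j^2)$ per variable. This matches the paper's proof (which uses the substitution $\varphi(t)=h(e^t)$ and first-order factorizations instead of Green's functions, and induction on $n$ instead of one-radius-at-a-time propagation), and your handling of the large-$|k|$ coefficients via $K_1$ is in fact cleaner than the paper's blanket claim that they vanish.

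The gap is in the several-variable propagation for part (ii). There you expand, in each $r_j$ with $j\geq 2$, $\hat a_{K_0}=A+Br_j^2+O(K_2 r_j^4)$ and control the coefficient $B$ only by the crude bound $|f|\leq K_1$ at $r_j\sim\delta/(2n)$, i.e. $|B|\leq C(K_1,K_2,\delta)$. That yields a propagation error of size $Cr_j^2$, not $Cr^4$, and the final estimate degenerates to $O(r^2)$, which is far weaker than the claimed $C(\delta_{k,2}+\delta_{k,4})r^4|\log r|+Cr^4$. A concrete test: $f=\mathrm{Re}(z_1^2)\,|z_2|^2$ (cut off), $k=2$; the true integral is $2\pi^2 r_1^2r_2^2=O(r^4)$, while your step only certifies $O(r_2^2)$ because the $r_2^2$-coefficient, which is genuinely of size $r_1^2$, is estimated merely as $O(1)$. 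The missing idea — and the point of the extra middle term in the paper's induction — is that $B$ is (up to a constant) the $K_0$-mode integral of $\partial^2 f/\partial z_j\partial\bar z_j$ restricted to $z_j=0$, and this function inherits the hypotheses of part (i): it vanishes together with its gradient at the origin because $\nabla^2 f(0)=\nabla^3 f(0)=0$, and its complex Hessian is bounded by the fourth-derivative hypothesis. Applying part (i) to it gives $|B|\leq C\bigl(r'^2|\log r'|\,\delta_{k,2}+r'^2\bigr)$, so $Br_j^2=O(r^4|\log r|\delta_{k,2}+r^4)$, which restores the claimed rate; without this step the proof of (ii) for $n>1$ does not go through. (A minor further imprecision: the bounded-at-$0$ kernel of $L_k^2$ is not always $\{r^{|k|},r^{|k|+2}\}$ — e.g. it contains $r\log r$ for $k=1$ and the constants for $k=2$ — but these extra members are annihilated by the fourth-order Taylor vanishing, so this does not affect the argument.)
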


\begin{proof}
See Appendix \ref{pode}.
\end{proof}

Assume that $m>\max \left\lbrace m_0 , \frac{ |\log(r)|^2}{r^2} \right\rbrace $ from now. We begin to estimate $\lambda^{-2}_{(p_1 ,\cdots ,p_n )}$ on $M$. 
\begin{lmm}
\label{ldlm}
Under the notations and assumptions of Lemma \ref{peaksec} with the additional condition $|\sec |\leq K$ on $B_1 (x_0 )$, we have the following estimates:
\begin{eqnarray}
\left| \frac{m^{n+p}}{P!}\lambda^{-2}_{(p_1 ,\cdots ,p_n )} -\frac{ 1 }{ \pi^{p} } \right| & \leq & Cr^{-2}m^{-1} , \label{ld1}
\end{eqnarray}
where $C=C\left( Q,n,p,K,\alpha ,C_0 \right) >0$ is a constant.
\end{lmm}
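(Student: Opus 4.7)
The plan is to view $\lambda^{-2}_{(p_{1},\cdots,p_{n})}$ as a perturbation of an explicit Gaussian integral. By the normalization of Proposition~\ref{holochart} together with the polarization relation (which in the paper's convention yields $\partial_{i}\partial_{\bar j}\log a(0)=-\pi\delta_{ij}$), and using that pure holomorphic derivatives of $a$ vanish through order three, I would decompose
\[
\log a(z)=-\pi|z|^{2}+\tilde\phi(z),\qquad \det g(z)=1+\rho(z),
\]
where $\tilde\phi$ vanishes to order three at $0$ (its second derivatives cancel between the pure holomorphic vanishing and the mixed piece $-\pi|z|^{2}$) and $\rho(0)=\nabla\rho(0)=0$. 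This rewrites the integrand as $|z^{P}|^{2}e^{-m\pi|z|^{2}}\Phi(z)$ with $\Phi(z):=e^{m\tilde\phi(z)}(1+\rho(z))$, $\Phi(0)=1$, $\nabla\Phi(0)=0$.

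Setting aside an exponentially small tail $|z|>\log m/\sqrt m$, the rescaling $w=\sqrt{m\pi}\,z$ identifies the main term as $(m\pi)^{-n-p}\int_{\mathbb{C}^{n}}|w^{P}|^{2}e^{-|w|^{2}}dV=P!/(\pi^{p}m^{n+p})$, which contributes exactly $\pi^{-p}$. It then remains to bound
\[
E:=\frac{m^{n+p}}{P!}\int_{|z|\leq\log m/\sqrt m}|z^{P}|^{2}e^{-m\pi|z|^{2}}(\Phi(z)-1)\,dV_{\mathbb{C}^{n}}
\]
by $Cr^{-2}m^{-1}$. For this I would pass to polar coordinates $z_{j}=\rho_{j}e^{\sqrt{-1}\theta_{j}}$ and integrate over $\theta$ first, since $|z^{P}|^{2}e^{-m\pi|z|^{2}}$ depends only on the radii.

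The contribution of $\rho(z)$ is controlled by Lemma~\ref{ode}(i) with $k=0$: the angular integral satisfies $|\int_{T^{n}}\rho\,d\theta|\leq C(n,K_{2})|\vec\rho|^{2}$, where $K_{2}$ bounds $|\partial\bar\partial\det g|$. Using the identity $\partial\bar\partial\log\det g=-\Ric(g)$, the bounded sectional curvature hypothesis, and the interior gradient bound $|\nabla g|\leq Cr^{-1}$ (coming from $\|g\|^{holo}_{C^{1,\alpha},r}\leq Q$ and $\nabla g(0)=0$), one gets $K_{2}\leq Cr^{-2}$. For $e^{m\tilde\phi}-1$ I would expand $e^{m\tilde\phi}=1+m\tilde\phi+\frac{1}{2}(m\tilde\phi)^{2}+\cdots$; the angular integral of the cubic piece $m\tilde\phi_{3}$ vanishes identically, because every monomial $z^{I}\bar z^{J}$ in $\tilde\phi_{3}$ has $|I|+|J|=3$ and hence $I\neq J$. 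The Taylor remainder $mR_{a}=m(\tilde\phi-\tilde\phi_{3})$ vanishes to order four at $0$, so its angular integral is bounded by Lemma~\ref{ode}(ii) with $k=0$.

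The decisive contribution to $E$ arises from $\frac{1}{2}(m\tilde\phi_{3})^{2}$: a polynomial of degree six in $(z,\bar z)$ whose coefficients are bounded by $|\nabla^{3}\tilde\phi(0)|^{2}\leq Cr^{-2}$. Here the cubic-coefficient estimate $|\nabla^{3}a(0)|\leq Cr^{-1}$ comes from $\|a\|^{*}_{3,\alpha;B_{\delta r}(0)}\leq Cr^{2}$ in Proposition~\ref{holochart} together with the definition of the interior norm. After the outer integration in $\rho_{j}$ against the Gaussian weight, the contribution of this term to $E$ has size precisely $Cr^{-2}m^{-1}$, matching the claim. The main obstacle is to verify that all other contributions---higher powers $(m\tilde\phi)^{k}$ with $k\geq 3$, the cross term $m\tilde\phi_{3}\cdot mR_{a}$, the interaction of $\rho$ with $e^{m\tilde\phi}-1$, and the Lemma~\ref{ode}(ii) remainder---are strictly smaller than $Cr^{-2}m^{-1}$. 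Each of them carries an additional factor $(r\sqrt m)^{-\alpha}$ or $(r\sqrt m)^{-1}$, and the scale constraint $m>|\log r|^{2}/r^{2}$ (together with the H\"older bound $[\nabla^{3}a]_{\alpha}\leq Cr^{-1-\alpha}$) ensures $r\sqrt m\gg 1$, so the rescaled domain stays inside the chart and these subleading terms genuinely drop.
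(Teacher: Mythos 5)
Your proposal is correct and follows essentially the same route as the paper's proof: both compare $\lambda^{-2}_{(p_1,\cdots,p_n)}$ with the model Gaussian integral $\int |z^P|^2 e^{-\pi m|z|^2}dV_{\mathbb{C}^n}=P!/(\pi^p m^{n+p})$, control $\det(g_{i\bar j})-1$ and $\log a+\pi|z|^2$ through the angular-average estimates of Lemma \ref{ode} (with $k=0$, the curvature identities supplying the $K_2\leq Cr^{-2}$ bound), exploit that the cubic part has vanishing angular average, and finish with Gaussian moment bounds under the constraint $m>|\log r|^2/r^2$. Your multiplicative splitting $e^{m\tilde\phi}(1+\rho)$ versus the paper's three-term difference decomposition, and your explicit isolation of the $\tfrac12(m\tilde\phi_3)^2$ term versus the paper's lumped remainder bounds (as in Lemma \ref{adpeaksec}), are only presentational differences.
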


\begin{proof}

Recall the definition $$\lambda_{(p_1 ,\cdots ,p_n)}^{-2} = \int_{\left\lbrace |z|\leq \frac{ \log(m)}{\sqrt{m}} \right\rbrace }  \left| z_1^{p_1} \cdots z_n^{p_n} \right|^2 a^m  dV_g ,$$
we have
\begin{eqnarray*}
& & \bigg| \int_{\left\lbrace |z|\leq \frac{\log(m) }{\sqrt{m}} \right\rbrace }  \left| z_1^{p_1} \cdots z_n^{p_n} \right|^2 a^m  dV_g \\
& & - \int_{\left\lbrace |z|\leq \frac{\log(m)}{\sqrt{m}} \right\rbrace } \left| z_1^{p_1} \cdots z_n^{p_n} \right|^2 e^{-\pi m|z|^2 } dV_{\mathbb{C}^n } \bigg| \\
& \leq &  \int_{\left\lbrace |z|\leq \frac{\log(m) }{\sqrt{m}} \right\rbrace }  \left| z_1^{p_1} \cdots z_n^{p_n} \right|^2 \left| a^m  -e^{-\pi m|z|^2 } \right| \left| \det\left( g_{i\bar{j}}\right) -1 \right| dV_{\mathbb{C}^n } \\
& & + \left| \int_{\left\lbrace |z|\leq \frac{\log(m) }{\sqrt{m}} \right\rbrace }  \left| z_1^{p_1} \cdots z_n^{p_n} \right|^2 a^m  -\left| z_1^{p_1} \cdots z_n^{p_n} \right|^2 e^{-\pi m|z|^2 } dV_{\mathbb{C}^n } \right| \\
& & + \left| \int_{\left\lbrace |z|\leq \frac{\log(m) }{\sqrt{m}} \right\rbrace }  \left| z_1^{p_1} \cdots z_n^{p_n} \right|^2 \left[ \det\left( g_{i\bar{j}}\right) -1  \right] e^{-\pi m|z|^2 }  dV_{\mathbb{C}^n } \right| .
\end{eqnarray*}

Proposition \ref{holochart} shows that:
\begin{eqnarray*}
& & \int_{\left\lbrace |z|\leq \frac{\log(m) }{\sqrt{m}} \right\rbrace }  \left| z_1^{p_1} \cdots z_n^{p_n} \right|^2  \left| a^m  -e^{-\pi m|z|^2 } \right| \left| \det\left( g_{i\bar{j}}\right) -1 \right| dV_{\mathbb{C}^n } \\ 
& \leq & C_1 r^{-4-2\alpha } m\int_{\left\lbrace |z|\leq \frac{\log(m) }{\sqrt{m}} \right\rbrace }  |z|^{2p+4+2\alpha } e^{-\pi m|z|^2 } dV_{\mathbb{C}^n } ,
\end{eqnarray*}
where $C_1 $ is a positive constant depending only on $Q$, $n$, $\alpha $, and $p$.

Since  $$R_{k\bar{k} i\bar{j}} = -\frac{\partial^2 g_{i\bar{j}}}{\partial z_k \partial \zbar_k} - g^{s\bar{t}} \frac{\partial g_{s\bar{j}}}{\partial z_k} \frac{\partial g_{i\bar{t}}}{\partial \zbar_k} ,$$
Lemma \ref{ode} now implies that
\begin{eqnarray*}
& & \left| \int_{\left\lbrace |z|\leq \frac{\log(m) }{\sqrt{m}} \right\rbrace } \left| z_1^{p_1} \cdots z_n^{p_n} \right|^2 \left[ \det\left( g_{i\bar{j}}\right) -1  \right] e^{-\pi m|z|^2 }  dV_{\mathbb{C}^n } \right| \\
& \leq & C_2 r^{-2} \int_{\left\lbrace |z|\leq \frac{\log(m) }{\sqrt{m}} \right\rbrace }  |z|^{2p+2} e^{-\pi m|z|^2 } dV_{\mathbb{C}^n },
\end{eqnarray*}
where $C_2 $ is a positive constant depending only on $Q$, $n$, $p$, and $K$.

By the definition of $a$, we have
$$\frac{1}{2\pi } \frac{\partial^2 }{\partial z_i \partial \zbar_j } \log(a) = -g_{i\bar{j}} ,$$
then we can apply Lemma \ref{ode} to $a-e^{-\pi |z|^2} $. It gives a constant $C_3 $ depending only on $Q$, $n$, $p$, and $K$, such that
\begin{eqnarray*}
& & \left| \int_{\left\lbrace |z|\leq \frac{\log(m) }{\sqrt{m}} \right\rbrace } \left| z_1^{p_1} \cdots z_n^{p_n} \right|^2 \left[ a^m -e^{-\pi m|z|^2 } \right]  dV_{\mathbb{C}^n }  \right| \\
& \leq & mC_3 r^{-2} \int_{\left\lbrace |z|\leq \frac{\log(m) }{\sqrt{m}} \right\rbrace }  |z|^{2p+4} e^{-\pi m|z|^2 } dV_{\mathbb{C}^n } \\
&   & + m^2C_3 r^{-3} \int_{\left\lbrace |z|\leq \frac{\log(m) }{\sqrt{m}} \right\rbrace } |z|^{2p+7} e^{-\pi m|z|^2 }  dV_{\mathbb{C}^n } .
\end{eqnarray*}

Then a straightforward calculation shows that
\begin{eqnarray*}
\int_{\mathbb{C}^n } |z|^{2p+4} e^{-\pi m|z|^2 } + m^{\frac{3}{2}} |z|^{2p+7 } e^{-\pi m|z|^2} dV_{\mathbb{C}^n} & \leq & C_4 m^{-n-p-2} ,
\end{eqnarray*}
where $C_4 =C_4 (n) $ is a constant.

By direct computation, we have
\begin{eqnarray*}
 \left| \int_{\left\lbrace |z|\leq \frac{\log(m) }{\sqrt{m}} \right\rbrace } \left| z^{p_1}_1 \cdots z^{p_n}_n \right|^2 e^{-\pi m|z|^2} dV_{\mathbb{C}^n} - \frac{ P! }{ \pi^{p} m^{n+p}} \right| \leq C_5 m^{-n-p-2}  ,
\end{eqnarray*}
where $C_5 =C_5 (p,n)>0 $ is a constant. We thus get the estimate (\ref{ld1}).
\end{proof}

\begin{rmk}
When $P=(0,\cdots ,0)$, we can replace the condition $|\sec |\leq K$ by $|Ric|\leq K$, and the proof is similar to the above.
\end{rmk}

We now estimate the inner product between peak sections with some other sections.

\begin{lmm}
\label{inprodlmm}
Let $S_m $ be the sections we have constructed in Lemma \ref{peaksec}, and let $T$ be another section of $L^m $ with $\int_M ||T||^2_{h^m} dV_g =1$, which contain no term $z_1^{p_1} \cdots z_n^{p_n} $ in its Taylor expansion at $x_0 $. Then
\begin{eqnarray*}
\left| \int_{M}  \left\langle S_m ,T \right\rangle_{h^m}  dV_g \right| & \leq & C r^{-1-\alpha } m^{-\frac{1+\alpha}{2} } ,
\end{eqnarray*}
where $\left\langle \; ,\; \right\rangle $ is the inner product on the linear space $H^0 (M,L^m )$ induced by the metric $h^m $, and $C=C(Q,n,p ,\alpha )$ is a constant.
\end{lmm}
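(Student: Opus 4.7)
The plan is to leverage the orthogonality of distinct monomials with respect to a rotationally symmetric weight, combined with the vanishing conditions on $a$ and $g_{i\bar{j}}$ at $0$ supplied by Proposition~\ref{holochart}. First I will split the integration into the near region $\Omega := \{|z|\leq \log(m)/\sqrt{m}\}$ of the holomorphic chart and its complement. On the complement, Cauchy--Schwarz together with the mass--concentration estimate $\int_{M\setminus\Omega}\|S_m\|_{h^m}^2\,dV_g \leq C_0/m^{2p'}$ from Lemma~\ref{peaksec} and $\|T\|_{L^2}=1$ gives a contribution bounded by $\sqrt{C_0}\,m^{-p'}$, which is harmless once $p'$ is chosen sufficiently large relative to $\alpha$.

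On $\Omega$, in the local trivialization I write $T = f(z)\,e_L^m$ with $f$ holomorphic and lacking the $z^P$ term in its Taylor expansion at $0$, and $S_m = \lambda_P(1+a_m/m^{2p'})(z^P+\varphi_m)\,e_L^m$. The contributions from $\varphi_m$ and the $a_m/m^{2p'}$ correction are absorbed by Cauchy--Schwarz using $\|\varphi_m\|_{L^2}\leq \sqrt{C_0}\,m^{-(4p'+n)}$, reducing the task to estimating
\begin{displaymath}
A := \lambda_P \int_\Omega z^P \,\overline{f(z)}\, a(z)^m \det(g_{i\bar{j}}(z))\,dV_{\mathbb{C}^n}.
\end{displaymath}
Passing to polar coordinates $z_j = r_j e^{\sqrt{-1}\theta_j}$ and integrating first over $\theta\in [0,2\pi]^n$, orthogonality against the rotationally symmetric model weight $e^{-\pi m|z|^2}$ yields
\begin{displaymath}
\int_{[0,2\pi]^n} e^{\sqrt{-1}P\cdot\theta}\,\overline{f(re^{\sqrt{-1}\theta})}\,d\theta_1\cdots d\theta_n = 0,
\end{displaymath}
since the Taylor coefficient of $z^P$ in $f$ vanishes by hypothesis and its series converges uniformly on $\Omega$ for large $m$. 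Hence $A = \lambda_P \int_\Omega z^P\, \overline{f}\,[a^m\det(g_{i\bar{j}}) - e^{-\pi m|z|^2}]\,dV_{\mathbb{C}^n}$, so only the non--rotationally--symmetric deviation of the weight contributes.

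To finish, I will bound this remaining integral by Cauchy--Schwarz against $\int_\Omega |f|^2 a^m\det(g_{i\bar{j}})\,dV \leq 1$. Writing $a^m\det(g_{i\bar{j}}) = e^{-\pi m|z|^2}(1+\epsilon)$, the vanishing conditions on $a$ and $g_{i\bar{j}}$ at the origin from Proposition~\ref{holochart}, together with the $C^{3,\alpha}$ and $C^{1,\alpha}$ interior bounds there, yield a pointwise estimate of the form $|\epsilon(z)|\leq Cr^{-1-\alpha}(m|z|^{3+\alpha}+|z|^{1+\alpha})$ on $\Omega$. Combining with $\lambda_P^2 \sim m^{n+p}$ from Lemma~\ref{ldlm} and the Gaussian moments $\int |z|^{2p+k}e^{-\pi m|z|^2}\,dV \sim m^{-(n+p+k/2)}$, this produces the desired bound $|A|\leq Cr^{-1-\alpha}m^{-(1+\alpha)/2}$ with no logarithmic losses. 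The main obstacle is precisely this orthogonality reduction: without exploiting the hypothesis on $T$, Cauchy--Schwarz alone would give only the trivial bound $\left|\int\langle S_m,T\rangle_{h^m}\,dV_g\right|\leq 1$. Alternatively, Lemma~\ref{ode} may be applied directly to the Fourier modes of $\epsilon$ on the distinguished boundaries $\prod_j\partial D(0,r_j)$ to reach the same conclusion.
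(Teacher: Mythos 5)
Your proposal is correct and follows essentially the same route as the paper's proof: split off the region $\left\lbrace |z|>\log(m)/\sqrt{m}\right\rbrace$ and the $\varphi_m$ and $a_m m^{-2p'}$ corrections by Cauchy--Schwarz, use the vanishing of the $z^P$-Taylor coefficient of $f_T$ to subtract the rotationally symmetric Gaussian weight, then Cauchy--Schwarz against $\|T\|_{L^2}\leq 1$ and bound the deviation $a^m\det(g_{i\bar{j}})-e^{-\pi m|z|^2}$ pointwise via Proposition \ref{holochart}, finishing with Gaussian moments and $\lambda_P\lesssim m^{(n+p)/2}$ from Lemma \ref{ldlm}. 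The only cosmetic difference is your closing remark about invoking Lemma \ref{ode}, which is not needed here and is not used in the paper's argument for this lemma either.
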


\begin{proof}
We divide the integral into three parts:
\begin{eqnarray}
 \int_{M}  \left\langle S_m ,T \right\rangle_{h^m}  dV_g & = & \int_{M\big\sq\left\lbrace |z| \leq \frac{\log(m) }{\sqrt{m}} \right\rbrace }  \left\langle S_m ,T \right\rangle_{h^m}  dV_g \nonumber\\
&  & + \left( 1+ \frac{a_m}{m^{2p'}} \right) \lambda_{\left( p_1 ,p_2 ,...,p_n \right) } \int_{\left\lbrace |z| \leq \frac{\log(m) }{\sqrt{m}} \right\rbrace }  \left\langle    \varphi_m   ,T \right\rangle_{h^m}  dV_g \label{inprod}\\
&  & + \left( 1+ \frac{a_m}{m^{2p'}} \right) \lambda_{\left( p_1 ,p_2 ,...,p_n \right)}  \int_{\left\lbrace |z| \leq \frac{\log(m) }{\sqrt{m}} \right\rbrace }  \left\langle    z_1^{p_1} \cdots z_n^{p_n}  e_L^m ,T \right\rangle_{h^m}   dV_g . \nonumber
\end{eqnarray}

Lemma \ref{peaksec} shows that 
\begin{eqnarray*}
& & \left| \int_{M\big\sq\left\lbrace |z| \leq \frac{\log(m) }{\sqrt{m}} \right\rbrace }  \left\langle S_m ,T \right\rangle_{h^m}   dV_g \right| \\ 
& \leq & \left(  \int_{M\big\sq\left\lbrace |z| \leq \frac{\log(m) }{\sqrt{m}} \right\rbrace }  \left\Vert S_m  \right\Vert^2_{h^m}    dV_g  \right)^{\frac{1}{2}} \left(  \int_{M\big\sq\left\lbrace |z| \leq \frac{\log(m) }{\sqrt{m}} \right\rbrace }    \left\Vert T  \right\Vert^2_{h^m}  dV_g  \right)^{\frac{1}{2}} \\
& \leq & C_1 m^{-p' } ,
\end{eqnarray*}
and the similar argument gives
\begin{eqnarray*}
 \left| \left( 1+ \frac{a_m}{m^{2p'}} \right) \lambda_{\left( p_1 ,p_2 ,...,p_n \right) } \int_{\left\lbrace |z| \leq \frac{\log(m)}{\sqrt{m}} \right\rbrace }  \left\langle    \varphi_m   ,T \right\rangle_{h^m}  dV_g \right| & \leq & C_2 m^{-p'} ,
\end{eqnarray*}
where $C_1 $, $C_2 $ are constants depending only on $Q$, $n$, $p$, $\alpha $.

It is sufficient to estimate the last term of (\ref{inprod}) now. 

We assume that $T = f_{T} e_L$ on ${\left\lbrace |z| \leq \frac{\log(m) }{\sqrt{m}} \right\rbrace} $, then $f_T$ is holomorphic on $\left\lbrace |z| \leq \frac{\log(m) }{\sqrt{m}} \right\rbrace $ and contains no term $z^{p_1}_1 \cdots z^{p_n}_n $ in the Taylor expansion at $z=0$. It follows that
$$\int_{ \left\lbrace |z| \leq \frac{\log(m)}{\sqrt{m}} \right\rbrace } z^{p_1}_1 \cdots z^{p_n}_n \bar{f}_T e^{-\pi m|z|^2}  dV_{\mathbb{C}^n} = 0 ,$$
because for each $P\neq 0$, $$\int_{ \prod_{j=1}^{n} \partial D(0,r_j ) } z^{p_1}_1 \cdots z^{p_n}_n d\theta_1 \wedge\cdots\wedge d\theta_{n} =0 .$$

By Schwarz inequality we have
\begin{eqnarray*}
& & \left| \int_{\left\lbrace |z| \leq \frac{\log(m) }{\sqrt{m}} \right\rbrace }  \left\langle    z_1^{p_1} \cdots z_n^{p_n}  e_L^m ,T \right\rangle_{h^m}   dV_g \right|  \\
& \leq &  \left( \int_{\left\lbrace |z| \leq \frac{\log(m) }{\sqrt{m}} \right\rbrace }      |z|^{2p}  \left[ a^m \det\left( g_{i\bar{j}} \right) -e^{-\pi m|z|^2} \right]^{2} a^{-m} \det\left( g_{i\bar{j}} \right)^{-1} dV_{\mathbb{C}^n} \right)^{\frac{1}{2}}           .
\end{eqnarray*}

It follows from Lemma \ref{holochart} that 
\begin{eqnarray*}
& & \left[ a^m \det\left( g_{i\bar{j}} \right) -e^{-\pi m|z|^2} \right]^{2} a^{-m} \det\left( g_{i\bar{j}} \right)^{-1} \\
& \leq & C_{3} \left[ a^m \left( \det\left( g_{i\bar{j}} \right) -1\right) + \left( a^m -e^{-\pi m|z|^2} \right) \right]^{2} a^{-m} \\
& \leq & C_{4} \left[ r^{-2-2\alpha } |z|^{2+2\alpha} e^{- m|z|^2}  + mr^{-1-\alpha }|z|^{3+\alpha } e^{- m|z|^2} \left( 1 -\left( \frac{e^{-\pi |z|^2}  }{a} \right)^m \right) \right] \\
& \leq & C_{5} e^{- m|z|^2} \left( r^{-2-2\alpha } |z|^{2+2\alpha}  + m^2 r^{-2-2\alpha } |z|^{6+2\alpha }  \right),
\end{eqnarray*}
where $C_3 $, $C_4 $, $C_5 $ are positive constants depend only on $Q$, $n$, $p$, $\alpha $.
Then we can conclude that
\begin{eqnarray*}
& & \left| \int_{\left\lbrace |z| \leq \frac{\log(m) }{\sqrt{m}} \right\rbrace }  \left\langle    z_1^{p_1} \cdots z_n^{p_n}  e_L^m ,T \right\rangle_{h^m}   dV_g \right| \\
& \leq &  \left( \int_{\left\lbrace |z| \leq \frac{\log(m) }{\sqrt{m}} \right\rbrace }      |z|^{2p}  \left[ a^m \det\left( g_{i\bar{j}} \right) -e^{-2\pi m|z|^2} \right]^{2} a^{-m} \det\left( g_{i\bar{j}} \right)^{-1} dV_{\mathbb{C}^n} \right)^{\frac{1}{2}}   \\
& \leq & \left( C_{5} r^{-2-2\alpha } \int_{\left\lbrace |z| \leq \frac{\log(m) }{\sqrt{m}} \right\rbrace }      |z|^{2p+2+2\alpha }   e^{- m|z|^2} \left( 1+   m^2  |z|^{4 }  \right) dV_{\mathbb{C}^n}  \right)^{\frac{1}{2}} \\
& \leq & C_6 r^{-1-\alpha } m^{-\frac{n+p+1+\alpha }{2} }
\end{eqnarray*}
for some constant $C_6 =C_6 (Q,n,p,\alpha )>0$.

From (\ref{ld1}), we see that $\lambda_{\left( p_1 ,p_2 ,...,p_n \right) } \leq C_{7} m^\frac{p+n}{2}$ for some constant $C_7 =C_7 (Q,p,n,\alpha ) .$ This gives
\begin{align*}
\left| \int_{M}  \left\langle S_m ,T \right\rangle_{h^m}  dV_g \right|  \leq  C r^{-1-\alpha } m^{-\frac{ 1+\alpha }{2} } ,
\end{align*}
where $C=C(Q,n,p ,\alpha )$ is a constant.
\end{proof}

Now we focus on the adjacent peak sections.

\begin{lmm}
\label{adpeaksec}
Let $S_m $ be the peak sections we have constructed in Lemma \ref{peaksec} for $P=\left( p_1 ,\cdots ,p_n \right) $, and let $T_m $ be the peak sections for $P'=\left( p_1 +k  ,\cdots ,p_n \right) $ for some $k\in\mathbb{N}$. We assume that $|\sec |\leq K$ for some $K>0 $, then we can find a constant $C=C(n,p ,Q,k,K)$, s.t.
$$ \left| \int_{M}  \left\langle S_m ,T_m \right\rangle_{h^m}  dV_g \right|  \leq  Cm^{-1 } +Cm^{-1}\log(m)\left( \delta_{k,2}+\delta_{k,4} \right) , $$
for each $m>m_0 $.
\end{lmm}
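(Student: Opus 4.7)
The plan is to follow the splitting of Lemma~\ref{inprodlmm}, but to exploit the additional structure that $T_m$ is itself a peak section with a definite monomial model near $x_0$. Substituting the Lemma~\ref{peaksec} expressions for $S_m$ and $T_m$ and breaking the integral over $M$ into the piece on $\{|z|\leq\log(m)/\sqrt{m}\}$ and its complement, the complement is absorbed into $O(m^{-p'})$ by Cauchy--Schwarz together with the tail bound in Lemma~\ref{peaksec}, while the cross terms involving $\varphi_m$ or the analogous error $\varphi_m'$ for $T_m$ are bounded by Cauchy--Schwarz using $\int\|\varphi_m\|^2_{h^m}dV_g\leq Cm^{-8p'-2n}$; choosing $p'$ large enough makes all of these contributions $o(m^{-1})$. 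What remains is the principal term
\begin{equation*}
(1+O(m^{-2p'}))\,\lambda_P\,\lambda_{P'}\int_{\{|z|\leq\log(m)/\sqrt{m}\}} z_1^{p_1}\bar z_1^{p_1+k}\prod_{j\geq 2}|z_j|^{2p_j}\,a^m\det(g_{i\bar j})\,dV_{\mathbb{C}^n}.
\end{equation*}

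Pass to polar coordinates $z_j=r_j e^{i\theta_j}$ so that $z_1^{p_1}\bar z_1^{p_1+k}=r_1^{2p_1+k}e^{-ik\theta_1}$; since $k\geq 1$ and $e^{-\pi m|z|^2}$ is radial in each $\theta_j$, the identity $\int_0^{2\pi}e^{-ik\theta_1}e^{-\pi m|z|^2}d\theta_1=0$ lets me replace $a^m\det(g_{i\bar j})$ in the $\theta_1$-integral by $a^m\det(g_{i\bar j})-e^{-\pi m|z|^2}$. Setting $\rho(z):=-\log a-\pi|z|^2$, the normalizations of Proposition~\ref{holochart} ensure $\rho$ vanishes to order $4$ at $0$ and $\det(g_{i\bar j})-1$ vanishes to order $2$, both with $C^4$-norms bounded in terms of $Q$ and $K$. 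On the relevant region $m|\rho|\leq C\log^4(m)/m\ll 1$, so one may expand
\begin{equation*}
a^m\det(g_{i\bar j}) - e^{-\pi m|z|^2}
= e^{-\pi m|z|^2}\bigl[(\det(g_{i\bar j})-1) - m\rho + O(m|z|^6) + O(m^2|z|^8)\bigr].
\end{equation*}

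Now apply Lemma~\ref{ode}(ii) to $\rho$ (whose $C^4$-norm is independent of $m$) to bound the $\theta_1$-Fourier coefficient of $-m\rho$ at mode $k$ by $Cmr^4$ in general, with an extra factor of $|\log r|$ when $k\in\{2,4\}$; apply Lemma~\ref{ode}(i) to $\det(g_{i\bar j})-1$ (whose $C^2$-norm is controlled by the sectional curvature bound via $R_{k\bar k i\bar j}=-\partial_k\bar\partial_k g_{i\bar j}-g^{s\bar t}\partial_k g_{s\bar j}\bar\partial_k g_{i\bar t}$) to obtain $Cr^2$, with an extra $|\log r|$ only when $k=2$; and estimate the higher-order remainders pointwise by $O(m|z|^6)+O(m^2|z|^8)$. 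Multiplying by the radial weight $r_1^{2p_1+k+1}\prod_{j\geq 2}r_j^{2p_j+1}e^{-\pi m|z|^2}$ and integrating in $r$, each term contributes $O(m^{-(n+p+k/2+1)})$ (with an additional $\log m$ in the resonant cases, coming from $|\log r|$ at the effective scale $r\sim m^{-1/2}$). Combined with $\lambda_P\lambda_{P'}=O(m^{n+p+k/2})$ from Lemma~\ref{ldlm}, this yields $O(m^{-1})$ in general and $O(m^{-1}\log m)$ for $k\in\{2,4\}$, matching the statement.

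The main obstacle will be the bookkeeping after the bracket is multiplied out: one must verify that each summand either has vanishing order $\geq 4$ with $C^4$-norm uniformly bounded in $m$ (so Lemma~\ref{ode}(ii) supplies an $m$-independent constant, and the factor of $m$ is inserted by hand from the $-m\rho$ coefficient) or has vanishing order $\geq 2$ with $C^2$-norm controlled by the curvature (so Lemma~\ref{ode}(i) applies), and that the $\delta_{k,2}$ arising from the Ricci contribution combines with the $\delta_{k,2}+\delta_{k,4}$ arising from the Kähler curvature contribution into the single resonance $\delta_{k,2}+\delta_{k,4}$ asserted in the lemma. After this bookkeeping, the final radial integration is a routine Gaussian moment computation.
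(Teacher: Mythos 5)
Your proposal is correct and follows essentially the same route as the paper's proof: after the same reduction to the principal integral, the paper also subtracts the Gaussian (using the vanishing angular integral of $z_1^k$ against radial weights), writes $a^m\det(g_{i\bar j})-e^{-\pi m|z|^2}$ in terms of $\varphi=\det(g_{i\bar j})-1$ and $\psi=\log a+\pi|z|^2$ (your $-\rho$) plus pointwise-small remainders, applies Lemma \ref{ode}(i)--(ii) to these two functions to produce exactly the $\delta_{k,2}$ and $\delta_{k,2}+\delta_{k,4}$ resonances, and finishes with the Gaussian moment computation (splitting at $|z|=m^{-2}$ to convert $|\log|z||$ into $\log m$) together with $\lambda_P\lambda_{P'}=O(m^{n+p+k/2})$ from Lemma \ref{ldlm}. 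No substantive difference beyond bookkeeping conventions.
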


\begin{proof}
It is sufficient to show that
$$ \left| \int_{\left\lbrace |z|  \leq \frac{\log(m) }{\sqrt{m}} \right\rbrace}   \left| z_1^{p_1} \cdots z_n^{p_n} \right|^2 z^k_1 a^m  dV_g \right|  \leq  Cm^{-n-p-1-\frac{k}{2} } \left( 1 + \delta_{k,2} \log(m)+\delta_{k,4}\log(m) \right) .$$

Let $\psi = \log(a)+\pi |z|^2 $, $\varphi = \det\left( g_{\alpha\bar{\beta}} \right) -1$. Then we have 
\begin{eqnarray*}
& & \int_{\left\lbrace |z|  \leq \frac{\log(m) }{\sqrt{m}} \right\rbrace}   \left| z_1^{p_1} \cdots z_n^{p_n} \right|^2 z^k_1 a^m  dV_g \\
& = & \int_{\left\lbrace |z|  \leq \frac{\log(m) }{\sqrt{m}} \right\rbrace}   \left| z_1^{p_1} \cdots z_n^{p_n} \right|^2  z^k_1 e^{-\pi m|z|^2} \left( e^{m\psi } -1-m\psi \right) (1+\varphi )dV_{\mathbb{C}^n} \\
&  & + \int_{\left\lbrace |z|  \leq \frac{\log(m) }{\sqrt{m}} \right\rbrace}   \left| z_1^{p_1} \cdots z_n^{p_n} \right|^2  z^k_1 e^{-\pi m|z|^2} \left(  m\psi +\varphi +m\psi\varphi \right) dV_{\mathbb{C}^n} .
\end{eqnarray*}

Since there exists a constant $C_1 =C_1 (n,p ,Q,k,K)$ such that $ |\psi |\leq C_1 |z|^{3 } $, $ |\varphi | \leq C_1 |z| $, and $$\left| e^{m\psi } -1-m\psi \right|\leq C_1 m^2 |z|^{7  } ,$$
we can apply the Lemma \ref{ode} to $\varphi $ and $\psi $ to find a constant $C_2 =C_2 (n,p ,Q,k,K) $ such that 
\begin{eqnarray*}
& & \left| \int_{\left\lbrace |z|  \leq \frac{\log(m) }{\sqrt{m}} \right\rbrace}   \left| z_1^{p_1} \cdots z_n^{p_n} \right|^2 z^k_1 a^m  dV_g \right| \\
& \leq & C_2 \int_{\left\lbrace |z|  \leq \frac{\log(m) }{\sqrt{m}} \right\rbrace}   |z|^{2p+k} e^{-\pi m|z|^2} \left( m|z|^4 +|z|^2 +m|z|^{5} +m^2 |z|^7 \right) dV_{\mathbb{C}^n} \\
& & + C_2 \int_{\left\lbrace |z|  \leq \frac{\log(m) }{\sqrt{m}} \right\rbrace}   |z|^{2p+k} e^{-\pi m|z|^2} \left( m|z|^4 +|z|^2 \right) \left| \log(|z| )\right| \left( \delta_{k,2}+\delta_{k,4} \right) dV_{\mathbb{C}^n}  .
\end{eqnarray*}

Then a straightforward computation gives the following inequality:
\begin{eqnarray*}
\int_{\left\lbrace |z|  \leq \frac{\log(m) }{\sqrt{m}} \right\rbrace}   |z|^{2p+k} e^{-\pi m|z|^2} \left( m|z|^4 +|z|^2 +m|z|^{5} +m^2 |z|^7 \right) dV_{\mathbb{C}^n}
& \leq & C_3 m^{-n-p-1-\frac{k}{2}} ,
\end{eqnarray*}
where $C_3 =C_3 (n,p, k) $ is a constant.

Clearly, $\sqrt{|z|} |\log(|z| )| \geq -2e^{-1} $, and hence
\begin{eqnarray*}
& & \int_{\left\lbrace |z|  \leq \frac{\log(m) }{\sqrt{m}} \right\rbrace}   |z|^{2p+k} e^{-\pi m|z|^2} \left( m|z|^4 +|z|^2 \right) \left| \log(|z| )\right| dV_{\mathbb{C}^n} \\
& \leq & \int_{\left\lbrace m^{-2} \leq |z|  \leq \frac{\log(m) }{\sqrt{m}} \right\rbrace}   |z|^{2p+k} e^{-2\pi m|z|^2} \left( m|z|^4 +|z|^2 \right) \left| \log\left( m^2 \right)\right| dV_{\mathbb{C}^n} \\
& & + \int_{\left\lbrace  |z|  \leq m^{-2}  \right\rbrace}   |z|^{2p+k} \left( m|z|^4 +|z|^2 \right) \left| \log(|z| )\right| dV_{\mathbb{C}^n} \\
& \leq & 2C_{3} m^{-n-p-1-\frac{k}{2}} |\log(m)| + C_4 m^{-2n-4p-2k-3},
\end{eqnarray*}
where $C_4 $ is a constant depends only on $n$, $p$, $Q$, $k$, $K$.
\end{proof}

\section{Pointwise Estimates}
\label{pe}
In this section, we will prove Theorem \ref{thm1}.

Choosing an $L^2$ orthonormal basis $\left\lbrace S^{m}_{j} \right\rbrace_{j=0}^{N_m -1}$ of $H^0 \left( M,L^m \right)$, where $N_m =dim H^0 \left( M,L^m \right)$. Since $L|_{U}$ is a trivial bundle, we can find holomorphic functions $f^m_j\in\mathcal{O}(U)$ s.t. $S^{m}_{j} = f^m_j e_L $ on $U$. By an orthogonal transformation we may further assume that
\begin{eqnarray}
f^m_j (0) & = & 0, \textrm{ for } j\geq 1, \label{fm10}\\
\frac{\partial f^m_j}{\partial z_k} (0) & = & 0, \textrm{ for } j\geq k+1, \; j=1,2,\cdots ,n, \label{fm11} \\
\frac{\partial^2 f^m_j}{\partial z^2_1} (0) & = & 0, \textrm{ for } j\geq n+2. \label{fm12} 
\end{eqnarray}

\begin{lmm}
\label{fm}
Under the conditions stated above, for each given $k\in\mathbb{N}$, $\alpha\in (0,1)$, there exists a positive constant $C$ depends only on $t,\; Q,\; r,\; n,\; k,\; \alpha ,\; K  $, such that
\begin{eqnarray}
\left| \sqrt{\frac{1}{ m^n}} \left| f^m_0 (0) \right| -1\right| & \leq & Cm^{-1} ,\label{fm0}\\
\left| \sqrt{\frac{1}{ m^{n+1}}} \left| \frac{\partial f^m_1 }{\partial z_1} (0) \right| -\sqrt{\pi } \right| & \leq & Cm^{-1} ,\label{fm1} \\
\left| \sqrt{\frac{1}{2 m^{n+2}}} \left| \frac{\partial f^m_{n+1} }{\partial z^2_1} (0) \right| -\pi \right| & \leq & Cm^{-1} ,\label{fm2} \\
m^{-\frac{n+k-2}{2}} \left| \frac{\partial f^m_0 }{\partial z^k_1} (0) \right| + m^{-\frac{n+k-1}{2}} \left| \frac{\partial^2 f^m_1 }{\partial z^{1+k}_1} (0) \right| & \leq & C +C\log(m)\left( \delta_{k,2}+\delta_{k,4} \right) ,\label{fm3}
\end{eqnarray}
and
\begin{eqnarray}
 m^{-\frac{n+\alpha }{2}} \left| \frac{\partial^2 f^m_j }{\partial z^2_1} (0) \right|  \leq  C, \;  1\leq j\leq n . \label{fm4}
\end{eqnarray}
\end{lmm}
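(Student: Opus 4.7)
My plan is to probe the orthonormal basis $\{S_j^m\}$ with Tian's peak sections $S_m^{(P)}$ from Lemma \ref{peaksec}, using the triangular vanishing conditions (\ref{fm10})--(\ref{fm12}) to single out individual Taylor coefficients of the $f_j^m$ at $0$. Concretely, expand $S_m^{(P)}=\sum_j c_j^{(P)}S_j^m$ with $c_j^{(P)}=\langle S_m^{(P)},S_j^m\rangle$ and differentiate $P$ times at $0$. The local formula in Lemma \ref{peaksec} yields $\partial_z^P S_m^{(P)}(0)=P!\,\lambda_P(1+O(m^{-2p'}))$; on the other hand, (\ref{fm10})--(\ref{fm12}) force most $\partial_z^P f_j^m(0)$ to vanish, so only a few basis elements contribute, and the resulting linear equation essentially identifies the leading Taylor coefficient in question.

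I would apply this for $P=(0,\ldots,0),\ e_1,\ 2e_1$ to isolate $c_0^{(0)}f_0^m(0)$, $c_1^{(e_1)}\partial_1 f_1^m(0)$, and $c_{n+1}^{(2e_1)}\partial_1^2 f_{n+1}^m(0)$ respectively as the leading contributions. The residual cross terms are inner products of the peak section with basis elements whose Taylor expansion omits $z^P$; these are controlled by Lemma \ref{inprodlmm}, sharpened by Lemma \ref{adpeaksec} in the adjacent-index cases. Using $|c_j^{(P)}|\le 1$ together with the asymptotics $\lambda_P^{-2}=\frac{P!}{\pi^p m^{n+p}}(1+O(r^{-2}m^{-1}))$ from Lemma \ref{ldlm}, and choosing $p'$ large enough that $m^{-2p'}$ is dominated by $m^{-1}$, this produces the lower bounds implicit in (\ref{fm0})--(\ref{fm2}).

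For the matching upper bounds, and for the derivative estimates (\ref{fm3})--(\ref{fm4}), I would derive a Parseval-type identity on the polydisc $\prod_j |z_j|\le \log m/(n\sqrt m)$. Expanding $f_j^m(z)=\sum_P a_P^{(j)}\,z^P$ and using the $L^2$-orthogonality of $\{z^P\}$ with respect to the rotationally symmetric weight $e^{-\pi m|z|^2}$ (whose squared norms are $\lambda_P^{-2}$ up to the computable error in Lemma \ref{ldlm}), together with $\|S_j^m\|_{L^2}^2=1$, gives
\[
\sum_P |a_P^{(j)}|^2\,\lambda_P^{-2}\ \le\ 1+O(m^{-1}).
\]
Selecting the relevant $P$ extracts each of the required Taylor-coefficient bounds. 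The $\delta_{k,2}\log m$ and $\delta_{k,4}\log m$ contributions in (\ref{fm3}) arise from the critical Fourier modes in Lemma \ref{ode}: the expansion $\log a=-\pi|z|^2+\cdots$ produces a $\cos 2\theta_1$ resonance from the curvature terms, and the volume-form correction $\det g_{i\bar j}-1$, which involves the Ricci curvature through Proposition \ref{holochart}, produces a $\cos 4\theta_1$ resonance after pairing with $|z_1|^2$.

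The main obstacle is making the Parseval-type identity uniform with error $O(m^{-1})$: the naive bound from Lemma \ref{inprodlmm} supplies only $O(m^{-(1+\alpha)/2})$ per term and cannot be summed over the roughly $m^n$ basis elements. One must therefore handle every angular Fourier cross-term $\int z^P\bar z^Q a^m dV_g$ with $P\ne Q$, together with the Gaussian-tail contributions outside the chart and the polydisc-versus-ball discrepancy; Lemma \ref{ode} is designed exactly for this. The Hölder exponent $\alpha$ surfacing in (\ref{fm4}) then reflects the limited $C^{1,\alpha}$-regularity of $a$ provided by Proposition \ref{holochart}.
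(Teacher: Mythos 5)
Your opening two paragraphs are essentially the paper's own mechanism for (\ref{fm0})--(\ref{fm2}) (probe the basis with the peak sections $T_i=\sum_j\beta_{ij}S^m_j$, control off--diagonal coefficients by Lemma \ref{inprodlmm} and the normalization by Lemma \ref{ldlm}), but the Parseval route you propose for (\ref{fm3})--(\ref{fm4}) cannot work, even if the inequality $\sum_P|a^{(j)}_P|^2\lambda_P^{-2}\le 1+O(m^{-1})$ were granted. That inequality only gives, for a single coefficient, $|a^{(j)}_P|\le\lambda_P(1+O(m^{-1}))$, i.e. $\bigl|\frac{\partial^k f^m_0}{\partial z_1^k}(0)\bigr|\lesssim m^{\frac{n+k}{2}}$, so $m^{-\frac{n+k-2}{2}}\bigl|\frac{\partial^k f^m_0}{\partial z_1^k}(0)\bigr|\lesssim m$ --- a full factor of $m$ weaker than the required $C+C\log m(\delta_{k,2}+\delta_{k,4})$; the same computation gives only $m$ for the $f^m_1$ term in (\ref{fm3}) and $m^{1-\frac{\alpha}{2}}$ instead of $C$ in (\ref{fm4}). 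The point of (\ref{fm3})--(\ref{fm4}) is not the $L^2$ normalization of each $S^m_j$ (which is all Parseval encodes) but that the particular basis elements aligned with the lowest peak sections have anomalously small higher Taylor coefficients. That extra gain of $m^{-1}$ (resp.\ $m^{-\frac{1+\alpha}{2}}$) comes from the near-orthogonality of \emph{distinct} peak sections: the paper uses Lemma \ref{adpeaksec} plus Cauchy--Schwarz to get $|\beta_{01}|=O(m^{-1})$ (with the $\log m$ factor exactly when $k=2,4$), and then the fact that the local representative of $T_0$ has vanishing $z_1^k$-coefficients at $0$ for $1\le k<2p'$, which together with (\ref{fm10})--(\ref{fm12}) yields the cancellation identity $\beta_{00}\frac{\partial f^m_0}{\partial z_1}(0)+\beta_{01}\frac{\partial f^m_1}{\partial z_1}(0)=0$, hence $\bigl|\frac{\partial f^m_0}{\partial z_1}(0)\bigr|\lesssim m^{-1}\cdot m^{\frac{n+1}{2}}$; (\ref{fm4}) comes from the analogous matrix relation with $\Vert B^{-1}-I\Vert\lesssim m^{-\frac{1+\alpha}{2}}$. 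Your proposal never invokes this cancellation at the origin, and no Parseval-type statement can substitute for it.

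In addition, the uniform Parseval inequality itself is left as an acknowledged but unresolved obstacle: Lemma \ref{ode} controls one angular mode on a distinguished boundary and does not by itself give summable, uniform control of all cross terms $\int z^P\bar z^Q a^m\,dV_g$ over the $\sim m^n$ relevant multi-indices, so even that intermediate step is a gap. Finally, the detour is unnecessary for the two-sided estimates (\ref{fm0})--(\ref{fm2}): since $\sum_j|\beta_{ij}|^2=1$ and Lemma \ref{inprodlmm} forces $\sum_{j\ne i}|\beta_{ij}|^2\lesssim m^{-1-\alpha}$, the diagonal coefficient satisfies $\bigl||\beta_{ii}|^2-1\bigr|\lesssim m^{-1-\alpha}$, and Lemma \ref{ldlm} then converts this into both the upper and lower bounds with error $O(m^{-1})$ --- which is exactly how the paper argues.
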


\begin{proof}
Let $T_0 $, $T_1 $, $\cdots$, $T_{n+1}$ be peak sections of $L^m $ for $P=(0,\cdots ,0)$,  $(1,0,\cdots ,0)$, $\cdots$, $(0,\cdots ,1)$, and $(2,0,\cdots ,0)$, respectively. We can find constants $\beta_{ij} $, satisfying that $T_i = \sum_{j=0}^{N_m -1}\beta_{ij} S^m_j $, for $j=1,2,\cdots ,n+1$. By Lemma \ref{inprodlmm}, 
\begin{eqnarray*}
& & \left| \int_{M}  \left\langle T_0 ,\sum_{j=1}^{N_m -1}\beta_{0j} S^m_j \right\rangle_{h^m}  dV_g \right| \\
 & \leq & C_{1} m^{-\frac{1+\alpha}{2} } \left| \int_{M}  \left\langle \sum_{j=1}^{N_m -1}\beta_{0j} S^m_j ,\sum_{j=1}^{N_m -1}\beta_{0j} S^m_j \right\rangle_{h^m}  dV_g \right|^{\frac{1}{2}} \\
& \leq & C_{1} m^{-\frac{1+\alpha}{2} } ,
\end{eqnarray*}
and hence
\begin{eqnarray*}
\sum_{j=1}^{N_m -1}\left| \beta_{0j} \right|^2 & \leq & C_{1} m^{-\frac{1+\alpha}{2} } ,\\ 
\end{eqnarray*}
where $C_1 =C_1 (t,Q,r,n,p,\alpha )$ is a constant. Since $\int_M ||T_0 ||^2_{h^m} dV_g =1$, it follows that
\begin{eqnarray*}
\sum_{j=1}^{N_m -1}\left| \beta_{0j} \right|^2 & = & \left| \int_{M}  \left\langle T_0 ,\sum_{j=1}^{N_m -1}\beta_{0j} S^m_j \right\rangle_{h^m}  dV_g \right|  \\
& \leq &  C_1 m^{-\frac{1+\alpha}{2}} \left( \sum_{j=1}^{N_m -1}\left| \beta_{0j} \right|^2 \right)  \\
& \leq & C^2_1 m^{-1-\alpha }     . 
\end{eqnarray*}

Thus we have 
\begin{eqnarray*}
 \left| \sqrt{\frac{1}{ m^n}} \left| f^m_0 (0) \right| -1\right| & \leq & \left| \sqrt{\frac{1}{ m^n}} \left| \beta_{00} \right|^{-1} \lambda_{(0,\cdots ,0)} -1\right| +C_2 m^{-1 } \\
& \leq & C_3 m^{-1 },
\end{eqnarray*}
where $C_2 =C_2 (t,Q,r,n,p,\alpha ,K)$, $C_3 =C_3 (t,Q,r,n,p,\alpha ,K)$ are constants.

Writing $T_i = f_{T_i} e_L $ locally, then we have $f_{T_i} (0)=0 $, $\forall i>0$, and $ \frac{\partial f_{T_i}}{ \partial z_j } (0)=0 $, when $i\neq j$. Then we have $\beta_{ij} =0$, if $i>j$. By a similar argument, Lemma \ref{inprodlmm} now shows that
\begin{eqnarray*}
\sum_{j=i}^{N_m -1}\left| \beta_{ij} \right|^2 & = & 1 , \\
\sum_{j=i+1}^{N_m -1}\left| \beta_{ij} \right|^2 & \leq & C_4 m^{-1-\alpha } ,
\end{eqnarray*}
and hence $$ \left| \left| \beta_{ii} \right|^2 -1 \right| \leq  C_{4} m^{-1-\alpha } ,$$ for some constant $C_4 =C_4 (t,Q,r,n,p,\alpha )$.

It follows that there exists a constant $C_5 =C_5 (t,Q,r,n,p,\alpha ,K ) >0$ such that
\begin{eqnarray*}
\left| \sqrt{\frac{1}{ m^{n+1}}} \left| \frac{\partial f^m_1 }{\partial z_1} (0) \right| - \sqrt{\pi} \right| & \leq &  C_5 m^{-1} , \\
\left| \sqrt{\frac{1}{2 m^{n+2}}} \left| \frac{\partial f^m_{n+1} }{\partial z^2_1} (0) \right| -\pi \right| & \leq & C_5 m^{-1} .
\end{eqnarray*}

Next we come to (\ref{fm3}) and (\ref{fm4}). 

Let $B=(\beta_{ij} )_{0\leq i,j\leq n+1}$ be a matrix, and let $X=(f^{m}_{i} )_{0\leq i\leq n+1}$, $Y=(f_{T_i} )_{0\leq i\leq n+1}$ be row vectors with function elements. Then $T_i = \sum_{j=0}^{N_m -1}\beta_{ij} S^m_j $ gives $$\frac{\partial^2 Y}{\partial z_1^2}=\frac{\partial^2 X}{\partial z_1^2}B ,\quad \frac{\partial Y}{\partial z_j}=\frac{\partial X}{\partial z_j }B ,\; 1\leq j\leq n ,$$ 
and thus $$\frac{\partial^2 X}{\partial z_1^2}=\frac{\partial^2 Y}{\partial z_1^2}B^{-1} ,\quad \frac{\partial X}{\partial z_j}=\frac{\partial Y}{\partial z_j }B^{-1} ,\; 1\leq j\leq n .$$ 

Since $f_{T_i} (0)=0 $, $\forall i>0$, and $ \frac{\partial f_{T_i}}{ \partial z_j } (0) =0 $, when $i\neq j$, then $\sum_{j=i+1}^{N_m -1}\left| \beta_{ij} \right|^2  \leq  C_4 m^{-1-\alpha }$ shows that $|| B^{-1} -I_{n+2} ||\leq (1+C_4 )^n(2+n)^n m^{-\frac{1+\alpha }{2}} $, where $I_{n+2} $ is the identity matrix. Then Lemma \ref{ldlm} implies (\ref{fm4}).

Apply the Lemma \ref{adpeaksec} to the peak sections $T_0 $ and $T_1 $, one can see that 
\begin{eqnarray*}
\left| \beta_{01} \bar{\beta}_{11} +\sum_{j=2}^{N_m -1} \beta_{0j} \bar{\beta}_{1j} \right| & \leq & C_6 m^{-1} ,
\end{eqnarray*}
where $C_6 =C_6 (t,Q,r,n,p ,K ) $ is a constant, and the Cauchy-Schwartz inequality shows that 
\begin{eqnarray*}
\left| \sum_{j=2}^{N_m -1} \beta_{0j} \bar{\beta}_{1j} \right| & \leq & C_4 m^{-1} ,
\end{eqnarray*}
hence we have $ \left| \beta_{01} \bar{\beta}_{11} \right|  \leq  (C_4 + C_6 ) m^{-1} $, and $ \left| \beta_{01} \right|  \leq  (1+C_4)(C_4 + C_6 ) m^{-1} $.

Recall the definition of peak section, $ \frac{\partial f_{T_0}}{\partial z_1 } (0)=0$, and we can rewrite it as $$ \beta_{00} \frac{\partial f^m_0}{\partial z_1} (0) + \beta_{01} \frac{\partial f^m_1}{\partial z_1} (0) =0 . $$

By the argument above, 
\begin{eqnarray*}
\left| \frac{\partial f^m_0}{\partial z_1} (0) \right| & = & \left|  \beta_{00}^{-1}  \beta_{01} \frac{\partial f^m_1}{\partial z_1} (0)  \right| \\
& \leq & (1+C_4 )^2 (C_4 +C_6)m^{-1} \cdot (1+C_5 ) m^{\frac{n+1}{2}} .
\end{eqnarray*}

Similarly, we can find a constant $C_7 =C_7 (t,Q,r,n,p ,K ) $ satisfying that $ |\beta_{1,n+1} |\leq C_7 m^{-1} $, and 
\begin{eqnarray*}
\left| \frac{\partial f^m_1}{\partial z^2_1} (0) \right| & \leq & \left|  \beta_{11}^{-1}  \beta_{1,n+1} \frac{\partial f^m_{n+1} }{\partial z^2_1} (0)  \right| +C_7 m^{-\frac{3 }{4} }\cdot m^\frac{n+1}{2}  \\
& \leq & (1+C_4 )\cdot (1+ C_7 ) m^{-1} \cdot (1+2C_5 ) m^{\frac{n+2}{2}} .
\end{eqnarray*}
It gives (\ref{fm3}) when $k=1 $. Likewise, the Lemma \ref{adpeaksec} can obtain (\ref{fm3}) in general, if we consider the peak sections of $L^m$ for $P=(1,0,\cdots , 0),\cdots ,(k,0,\cdots , 0) $. The proof is almost the same as that of the case $k=1$, so we omit it.
\end{proof}

Now we are ready to prove Theorem \ref{thm1}.

\noindent \textbf{Proof of Theorem \ref{thm1}: } The proof is completed by showing the following inequalities: 
\begin{eqnarray*}
\left| \frac{1}{\pi m} \frac{\partial^2 }{\partial z_1 \partial \zbar_1} \log\left( \sum_{j=0}^{N_m -1} \left| f_j^m \right|^2 \right) -1 \right| (x_0) \leq Cm^{-1} ,
\end{eqnarray*}
and
\begin{eqnarray*}
\left| \frac{1}{m} \frac{\partial^3 }{\partial z_1^2 \partial \zbar_1} \log\left( \sum_{j=0}^{N_m -1} \left| f_j^m \right|^2 \right) \right| (x_0) \leq Cm^{-\frac{1}{2}} .
\end{eqnarray*}

Since $f^m_j$ are holomorphic functions satisfying the assumptions (\ref{fm10}) and (\ref{fm11}), we can conclude that
\begin{eqnarray*}
 \frac{\partial^2 }{\partial z_1 \partial \zbar_1} \log\left( \sum_{j=0}^{N_m -1} \left| f_j^m \right|^2 \right) (x_0 ) & = &  \frac{\partial}{\partial z_1 } \left(  \frac{ \sum_{j=0}^{N_m -1}  f^m_j \partial \bar{f}^m_j / \partial\zbar_{1} }{ \sum_{j=0}^{N_m -1} \left| f_j^m \right|^2 }   \right) (x_0 ) \\
& = &  \frac{ \left| \partial f^m_1 /\partial z_1 \right|^2  }{\left| f^m_0 \right|^2} (x_0 ) ,
\end{eqnarray*}
and similarly,
\begin{eqnarray*}
& & \frac{\partial^3  }{\partial z_1^2 \partial \zbar_1} \log\left( \sum_{j=0}^{N_m -1} \left| f_j^m \right|^2 \right) (x_0 ) \\
& = & \frac{ \partial^2 f^m_1 /\partial z^2_1  \cdot \partial \bar{f}^m_1 / \partial\zbar_1 }{ \left| f_0^m \right|^2 } - \frac{2   \bar{f}^m_0 \partial {f}^m_0 / \partial z_{1} \cdot \partial f^m_1 /\partial z_1 \cdot \partial \bar{f}^m_1 / \partial\zbar_1   }{  \left| f_0^m \right|^4 } .
\end{eqnarray*}

Combining Corollary \ref{secvol} with the lemmas in this section, we can assert that
\begin{eqnarray*}
\left| \frac{ \left| \partial f^m_1 /\partial z_1 \right|^2  }{2\pi m\left| f^m_0 \right|^2} -1 \right| & \leq &  \left| \frac{ (m+n+1)!/m!  }{m\cdot (m+n)!/m!} -1 \right| +C_1 m^{-1} \\
& = &  \frac{ n+1  }{m} +C_1 m^{-1} ,
\end{eqnarray*}
where $C_1 =C_1 (K,v ) >0$ is a constant.

Likewise, we can find a constant $C_2 =C_2 (K,v ) >0 $ satisfies that
\begin{eqnarray*}
\left| \frac{\partial^3  }{\partial z_1^2 \partial \zbar_1} \log\left( \sum_{j=0}^{N_m -1} \left| f_j^m \right|^2 \right) (x_0 ) \right| \leq C_2 m^{ \frac{1}{2}} .
\end{eqnarray*}

This theorem follows. \qed

\section{H\"older Estimates}
\label{lhestm}
In this section, we will show that the sequence of Bergman metrics, $\{ g_{m} \}_{m=m_0 }^{\infty} $, converges to $g$ in the $C^{1,\alpha}$-topology, and we will try to control the $C^2 $-norm and $W^{2,p}$-norm of $g_{m}$ now. Although $g_m $ cannot converge into $g$ in the Sobolev space $W^{2,1} $ (see section \ref{exp}), we can still make some estimates.

Fix a uniform holomorphic chart $(z_1 ,\cdots ,z_n )$ as in the previous section, then we follow the assumptions and notations here.

\begin{lmm}
\label{pppp}
Under the conditions stated in the previous section, there exists a positive constant $C$, $m_0 $ depends only on $t$, $Q$, $r$, $n$, $K$, such that
\begin{eqnarray*}
\left| \frac{\partial^2 g_{k\bar{l} ,m} }{\partial z_i \partial z_j} \right| \leq C\log(m) , 
\end{eqnarray*}
when $m>m_0 $,  where $g_{k\bar{l} ,m} = g_m \left( \frac{\partial }{\partial z_k} ,\frac{\partial }{\partial \zbar_l }  \right) $.
\end{lmm}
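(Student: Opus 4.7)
The plan is to estimate the fourth derivative $\partial_i \partial_j \partial_k \partial_{\bar l} \log F$ of the Bergman potential at each point $p$ in the chart, after re-centering to a uniform chart at $p$ and using an adapted orthonormal basis of $H^0(M, L^m)$ that isolates a single dominant term in which the $\log m$ factor is manifest. At a fixed $p$, I would use Proposition \ref{holochart} to produce a chart $\tilde\phi$ centered at $p$ whose transition with the original chart is a biholomorphism with uniformly bounded derivatives of all orders; it thus suffices to bound $\partial_{\tilde z_i} \partial_{\tilde z_j} g_{k\bar l, m}(0)$ in the new chart at $\tilde z = 0$. Choose an $L^2$-orthonormal basis $\{S^m_j\}$ satisfying the analogues of (\ref{fm10})--(\ref{fm12}) at $p$ (extended by further Gram--Schmidt so that $\partial_l f^m_s(0) = 0$ for all $l$ and all $s \geq n+1$), write $S^m_j = f^m_j e_L^m$, $F = \sum_j |f^m_j|^2$, and recall that the goal is to bound $m^{-1} \partial_i \partial_j \partial_k \partial_{\bar l} \log F(0)$.

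The main simplification is the decomposition $\log F = \log|f^m_0|^2 + \log(1+G)$ with $G = \sum_{s \geq 1} |h_s|^2$ and $h_s = f^m_s/f^m_0$. The first term is pluriharmonic on $\{f^m_0 \neq 0\}$, hence contributes zero to the $\partial^3 \partial_{\bar l}$ derivative. Since $h_s(0) = 0$ for $s \geq 1$, every pure holomorphic derivative of $G$ at $0$ vanishes; applying the Leibniz rule to $G^N$ for $N \geq 2$ shows $\partial_i \partial_j \partial_k \partial_{\bar l} G^N(0) = 0$, so only the linear term of the Taylor series of $\log(1+G)$ survives:
\begin{equation*}
\partial_i \partial_j \partial_k \partial_{\bar l} \log F(0) \;=\; \sum_{s \geq 1} (\partial_i \partial_j \partial_k h_s)(0) \cdot \overline{(\partial_l h_s)(0)}.
\end{equation*}
Writing $h_s = f^m_s/f^m_0$ and using $f^m_s(0) = 0$ for $s \geq 1$, the quotient rule gives dominant terms $\partial_l h_s(0) = \partial_l f^m_s(0)/f^m_0(0)$ and $\partial_i \partial_j \partial_k h_s(0) = \partial_i \partial_j \partial_k f^m_s(0)/f^m_0(0)$, up to subleading corrections involving derivatives of $f^m_0$ which are themselves controlled by Lemma \ref{fm}.

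It remains to bound the surviving sum $|f^m_0(0)|^{-2} \sum_{s \geq 1} \partial_i \partial_j \partial_k f^m_s(0) \cdot \overline{\partial_l f^m_s(0)}$. By the extended adapted basis, $\partial_l f^m_s(0) = 0$ for $s \geq n+1$, so at most $n$ indices contribute. For each such $s$, Lemma \ref{fm} gives $|f^m_0(0)|^2 \gtrsim m^n$ from (\ref{fm0}), $|\partial_l f^m_s(0)| \leq C m^{(n+1)/2}$ from (\ref{fm1}), and the key bound $|\partial_i \partial_j \partial_k f^m_s(0)| \leq C m^{(n+1)/2} \log m$ from (\ref{fm3}) applied to the third-order derivative (after a unitary rotation of coordinates that aligns the direction of interest with $z_1$, which is permitted because the hypotheses of Proposition \ref{holochart} and Lemma \ref{fm} are unitarily invariant); the $\log m$ here is precisely the factor arising in the $\delta_{\cdot, 2}$ term of (\ref{fm3}). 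Combining the estimates and dividing by $|f^m_0(0)|^2 \cdot m \approx m^{n+1}$ yields the claimed bound $C \log m$. The main obstacle is the bookkeeping for the subleading corrections (quotient-rule terms involving $\partial^a f^m_0(0)$) and for the reduction of arbitrary mixed third-order holomorphic derivatives to a single direction via unitary rotation; both are routine once the overall strategy is in place.
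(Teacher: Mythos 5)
Your overall strategy is the same as the paper's: reduce to bounding the $(3,1)$-derivative $\partial_i\partial_j\partial_k\partial_{\bar l}\log\big(\sum_j |f_j^m|^2\big)$ at the center of a uniform chart, use the adapted basis (\ref{fm10})--(\ref{fm12}), and feed in Lemma \ref{fm}, with the $\log m$ coming from the $\delta_{k,2}$ term of (\ref{fm3}); your $\log|f_0^m|^2+\log(1+G)$ decomposition and the observation that only the linear term of $\log(1+G)$ survives is just a cleaner way of organizing the quotient-rule expansion the paper writes out directly, and the recentering at a general point $p$ is consistent with the uniformity of the constants.

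There is, however, a genuine gap in how you treat general mixed indices. After fixing the adapted basis, your surviving main term is $|f_0^m(0)|^{-2}\sum_{s}\partial_i\partial_j\partial_k f_s^m(0)\cdot\overline{\partial_l f_s^m(0)}$, where by (\ref{fm11}) the sum runs over $1\le s\le l$ (not just $s=1$), and you invoke (\ref{fm3}) to bound $|\partial_i\partial_j\partial_k f_s^m(0)|$. But (\ref{fm3}) only controls pure $z_1$-derivatives of the two specific basis elements $f_0^m$ and $f_1^m$; nothing in Lemma \ref{fm} bounds third derivatives of $f_2^m,\dots,f_n^m$, nor mixed third derivatives of $f_1^m$. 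The proposed ``unitary rotation aligning the direction of interest with $z_1$'' cannot be applied term-by-term at this stage: a mixed derivative $\partial_i\partial_j\partial_k$ is not along a single direction, and, more importantly, the conditions (\ref{fm11})--(\ref{fm12}) — hence the identity of the function called $f_1^m$ in Lemma \ref{fm} — are tied to the chosen coordinates, so the rotated-frame version of (\ref{fm3}) speaks about a different, re-adapted basis, not about your fixed $f_s^m$. The repair is to reorder the reduction, which is what the paper implicitly does: since $F=\sum_j|f_j^m|^2$ is independent of the choice of $L^2$-orthonormal basis, first recover the full tensor $\partial_i\partial_j\partial_k\partial_{\bar l}\log F(0)$ (up to a dimensional constant) by polarization from its diagonal values $\partial_v^3\partial_{\bar v}\log F(0)$ over unit directions $v$; then for each $v$ rotate coordinates so that $v=e_1$ (the hypotheses of Proposition \ref{holochart} are unitarily invariant) and re-adapt the basis to the rotated coordinates. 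In that pure case (\ref{fm11}) kills every contribution except those of $f_0^m$ and $f_1^m$, and (\ref{fm3}) with $k=2$ gives $|\partial_1^3 f_1^m(0)|\le Cm^{\frac{n+1}{2}}\log m$, after which your size count ($|f_0^m(0)|^2\sim m^n$, $|\partial_1 f_1^m(0)|\sim m^{\frac{n+1}{2}}$, division by $m$) yields $C\log m$ exactly as in the paper.
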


\begin{proof}
It's sufficient to show that 
\begin{eqnarray*}
\left| \frac{\partial^4 }{\partial z^3_1 \partial \zbar_1} \log\left( \sum_{j=0}^{N_m -1} \left| f_j^m \right|^2 \right) (x_0 ) \right| \leq Cm\log(m) . 
\end{eqnarray*}

By a straightforward computation, we obtain
\begin{eqnarray*}
& & \frac{\partial^4 }{\partial z^3_1 \partial \zbar_1} \log\left( \sum_{j=0}^{N_m -1} \left| f_j^m \right|^2 \right) (x_0 ) \\
& = & \frac{ \partial^3 f^m_1 /\partial z^3_1 \cdot \partial \bar{f}^m_1 /\partial \zbar_1  }{\left| f^m_0 \right|^2} - 3\frac{ \sum_{i=1}^{2}  \sum_{j=0}^{1} \bar{f}^m_0 \partial^i f^m_0 /\partial^i z_1 \cdot \partial^{2-i} f^m_j /\partial z^{2-i}_1 \cdot \partial \bar{f}^m_j /\partial \zbar_1   }{\left| f^m_0 \right|^4} \\
& & + 6\frac{| \partial f^m_0 /\partial z_1 |^2 \cdot \left( \sum_{j=0}^{1} | \partial f^m_j /\partial z_1 |^2 \right) }{\left| f^m_0 \right|^4} + 6\frac{ (  f^m_0  )^2 ( \partial f^m_0 /\partial z_1 )^2 \cdot  | \partial f^m_1 /\partial z_1 |^2  }{\left| f^m_0 \right|^6} .
\end{eqnarray*}
Then this lemma follows from Lemma \ref{fm}.
\end{proof}

Similarly, we have:

\begin{lmm}
\label{pppb}
Under the conditions stated in the previous section, there exists a positive constant $C$, $m_0 $ depends only on $t$, $Q$, $r$, $n$, $K$, such that
\begin{eqnarray*}
\left| \frac{\partial^2 g_{k\bar{l} ,m} }{\partial z_i \partial \zbar_j} \right| \leq C , 
\end{eqnarray*}
when $m>m_0 $,  where $g_{k\bar{l} ,m} = g_m \left( \frac{\partial }{\partial z_k} ,\frac{\partial }{\partial \zbar_l }  \right) $.
\end{lmm}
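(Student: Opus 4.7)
The strategy mirrors Lemma \ref{pppp}: it is enough to show
\[ \left| \frac{\partial^4}{\partial z_i \partial z_k \partial \bar{z}_j \partial \bar{z}_l} \log\!\left( \sum_{\nu=0}^{N_m -1} |f^m_\nu|^2 \right)(x_0) \right| \leq Cm, \]
and then divide by $\pi m$. I would expand this fourth derivative by applying the quotient rule twice to $\log N$ (where $N = \sum_\nu |f^m_\nu|^2$), obtaining the fifteen-or-so rational monomials in $N$, $N_I$, $N_{\bar{J}}$, $N_{I\bar{J}}$ familiar from Lemma \ref{pppp}. Using (\ref{fm10})--(\ref{fm12}) to evaluate at $x_0$ reduces every $N_{I\bar{J}}(x_0)$ to an explicit expression in the first and second holomorphic derivatives of $f^m_0, f^m_1, \ldots, f^m_{n+1}$, plus a Bergman-style tail $\sum_{\nu\geq n+2}$ whose size is controlled by Lemma \ref{inprodlmm}.

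The critical difference from Lemma \ref{pppp} is that a naive application of Lemma \ref{fm} now produces two $O(m^2)$ contributions that must cancel to leading order. On the one hand, $N_{ik\bar{j}\bar{l}}/N$ inherits a dominant piece of size $\approx 2\pi^2 m^2$ (for $i=k=j=l=1$) from the quadratic peak section $f^m_{n+1}$ via (\ref{fm0}) and (\ref{fm2}); on the other hand, each factor in $(N_{k\bar{l}}N_{i\bar{j}} + N_{k\bar{j}}N_{i\bar{l}})/N^2$ satisfies $N_{k\bar{l}}/N \approx \pi m\, g_{k\bar{l}}(x_0)$ by Theorem \ref{thm1}, so this sum also contributes $\approx 2\pi^2 m^2$. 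The precise asymptotic $\lambda^{-2}_P = P!/(\pi^p m^{n+p})(1 + O(m^{-1}))$ from Lemma \ref{ldlm} shows these two contributions agree to first order; for $i=k=j=l=1$ this is the identity
\[ \frac{|\partial_1^2 f^m_{n+1}(x_0)|^2}{|f^m_0(x_0)|^2} = 2\left(\frac{|\partial_1 f^m_1(x_0)|^2}{|f^m_0(x_0)|^2}\right)^2 + O(m), \]
and an analogous coefficient match, obtained by polarizing over the peak sections indexed by $P=e_i+e_k$, handles the general case. Once this cancellation is in hand, every surviving term is $O(m)$ or smaller: the next worst is $N_{ik}N_{\bar{j}\bar{l}}/N^2 = |\partial_i\partial_k f^m_0|^2/|f^m_0|^2 \lesssim \log(m)^2$ by (\ref{fm3}) with $k=2$, while the remaining mixed products are $O(m^{\alpha/2})$ or $O(1)$ by (\ref{fm3}) and (\ref{fm4}).

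\textbf{The main obstacle} is executing this $O(m^2)$ cancellation quantitatively for all mixed indices with an error of only $O(m)$. This requires more than the pointwise data of Lemma \ref{fm}: one must track how the orthonormal basis $\{S^m_j\}$ differs from the peak section system $\{T^P_m\}$ at the quadratic level, handling off-diagonal Gram--Schmidt coefficients via Lemma \ref{inprodlmm} and ``adjacent'' peak-section inner products via Lemma \ref{adpeaksec}. Provided the error estimates from these lemmas are combined carefully, so that no extra $\log m$ factor creeps in, the cancellation goes through and the final bound follows. Morally, the lemma is a first-order consequence of the Tian--Yau--Zelditch expansion, but it has to be extracted from the finite-order algebraic bookkeeping of the peak sections already constructed in Section \ref{tps}.
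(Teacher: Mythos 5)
Your proposal follows essentially the same route as the paper: reduce to bounding $\left|\partial^2_{z_1}\partial^2_{\bar z_1}\log\left(\sum_j |f^m_j|^2\right)(x_0)\right|$ by $Cm$, expand the log-derivative, and observe that the two $O(m^2)$ contributions, $\left|\partial^2_1 f^m_{n+1}\right|^2/\left|f^m_0\right|^2$ and $2\left|\partial_1 f^m_1\right|^4/\left|f^m_0\right|^4$, cancel up to $O(m)$, with the surviving terms controlled by (\ref{fm3}) and (\ref{fm4}). The only remark is that the cancellation you single out as the main obstacle requires nothing beyond the relative $O(m^{-1})$ accuracy already recorded in (\ref{fm0})--(\ref{fm2}) of Lemma \ref{fm} (no further Gram--Schmidt bookkeeping via Lemma \ref{inprodlmm} or Lemma \ref{adpeaksec} is needed), which is exactly how the paper closes the argument.
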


\begin{proof}
It's sufficient to show that 
\begin{eqnarray*}
\left| \frac{\partial^4 }{\partial z^2_1 \partial \zbar^2_1} \log\left( \sum_{j=0}^{N_m -1} \left| f_j^m \right|^2 \right) (x_0 ) \right| \leq Cm . 
\end{eqnarray*}

By a straightforward computation, we obtain
\begin{eqnarray*}
& & \frac{\partial^4 }{\partial z^2_1 \partial \zbar^2_1} \log\left( \sum_{j=0}^{N_m -1} \left| f_j^m \right|^2 \right) (x_0 ) \\
& = & \frac{\sum_{i=1}^{n+1} \left| \partial^2 f^m_i /\partial z^2_1 \right|^2 }{\left| f^m_0 \right|^2} + \frac{ -2 \left| \partial f^m_1 /\partial z_1 \right|^4 + 4 \left| \partial f^m_0 /\partial z_1 \right|^2 \left| \partial f^m_1 /\partial z_1 \right|^2 }{\left| f^m_0 \right|^4} \\
& &-4Re\left(   \frac{\partial^2 f^m_1 /\partial z^2_1 \cdot \partial \bar{f}^m_1 /\partial \zbar_1 \cdot f^m_0 \cdot \partial \bar{f}^m_0 /\partial \zbar_1  }{\left| f^m_0 \right|^4}     \right) .
\end{eqnarray*}
By Lemma \ref{fm}, 
\begin{eqnarray*}
\frac{\sum_{i=1}^{n} \left| \partial^2 f^m_i /\partial z^2_1 \right|^2 }{\left| f^m_0 \right|^2} \leq C_1 \frac{m^{n+\frac{1}{2}}}{m^n } = C_1 m^{\frac{1}{2}} ,
\end{eqnarray*}
\begin{eqnarray*}
 \frac{\left| \partial f^m_0 /\partial z_1 \right|^2 \left| \partial f^m_1 /\partial z_1 \right|^2 }{\left| f^m_0 \right|^4} \leq C_1 \frac{ m^{\frac{n+1}{2}} \cdot m^{\frac{n-1}{2}} }{ m^{2n} } = C_1 ,
\end{eqnarray*}
and
\begin{eqnarray*}
 \left|   \frac{\partial^2 f^m_1 /\partial z^2_1 \cdot \partial \bar{f}^m_1 /\partial \zbar_1 \cdot f^m_0 \cdot \partial \bar{f}^m_0 /\partial \zbar_1  }{\left| f^m_0 \right|^4}     \right| 
 \leq  C_1 \frac{ m^{\frac{n}{2}} \cdot m^{\frac{n+1}{2}} \cdot  m^{\frac{n}{2}} \cdot m^{\frac{n-1}{2}} }{ m^{2n} } = C_1 ,
\end{eqnarray*}
where $C_1 =C_1 (t,Q,r,n,K)$ is a constant.

Apply Lemma \ref{fm} again, we can conclude that 
\begin{eqnarray*}
& & \left| \frac{ \left| \partial^2 f^m_{n+1} /\partial z^2_1 \right|^2 }{\left| f^m_0 \right|^2} - \frac{ 2 \left| \partial f^m_1 /\partial z_1 \right|^4 }{\left| f^m_0 \right|^4} \right| \\
& \leq & \frac{2m^{2n+2} }{m^{2n}} (1+ C_2 m^{-1} ) - \frac{2m^{2n+2} }{m^{2n}} (1- C_2 m^{-1} ) \\
& = & 4C_2 m ,
\end{eqnarray*}
where $C_2 =C_2 (t,Q,r,n,K)$ is a constant.

This proves the Lemma.
\end{proof}

Using the classical $L^p$-estimates of elliptic partial differential equations, we can give the following result as a corollary:
\begin{coro}
\label{lpestcoro}
Let $(M,g)$ be a polarized K\"ahler manifold. Assume that there are constants $K,v >0$ and $t\geq 0$ s.t. $Ric\geq -t g $ on $M$, $\left| \sec \right|\leq K$ on $B_1 (x_0 )$, and $\Vol B_{1}(x_0 ) >v ,$ for $ x_0 \in M$. Then for each $q>1$, we have constants $\epsilon =\epsilon (K,v,t)$, $m_0 =m_0 (K,v,t )\in\mathbb{N}$ and $C=C(K,v,t,q ) >0$, such that
$$ ||\nabla^2 g_m ||_{q, B(x_0 ,\epsilon )} \leq C ,\; \forall m>m_0 ,$$
where $||\cdot ||_{q, B(x_0 ,\epsilon )}$ is the $L^q$ norm of tensors on $B(x_0 ,\epsilon )$ which is induced by $g$.
\end{coro}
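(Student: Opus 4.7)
The plan is to reduce the corollary to a standard interior $L^{q}$ estimate for the Poisson equation on the chart, applied to each component $g_{k\bar l,m}$. The two ingredients already established — the uniform bound on the mixed Hessian $\partial\bar\partial g_{k\bar l,m}$ from Lemma~\ref{pppb} and the uniform $C^{0}$ control of $g_{m}$ coming from Theorem~\ref{thm1} — essentially suffice; only the pointwise-to-ball upgrade requires care.

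First I would fix $\epsilon=\epsilon(K,v,t)$ small enough that for every $y\in B(x_{0},\epsilon)$ the geometric hypotheses still hold on $B_{1/2}(y)$ and Proposition~\ref{holochart} supplies a holomorphic chart at $y$ with constants depending only on $K,v,t$. The key observation is that the derivations of Lemma~\ref{fm} and Lemma~\ref{pppb} use only (i) such a uniform holomorphic chart, (ii) the peak section construction of Section~\ref{tps}, and (iii) the orthonormalisation (\ref{fm10})--(\ref{fm12}) performed at the base point. Re-centring the whole construction at any $y\in B(x_{0},\epsilon)$ therefore yields
$$\left|\frac{\partial^{2}g_{k\bar l,m}}{\partial w_{i}\partial\bar w_{j}}(y)\right|\leq C$$
in the local chart $(w_{i})$ at $y$, with $C$ depending only on $K,v,t$. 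Since the transition between the fixed chart at $x_{0}$ and any shifted chart near $y$ is a biholomorphism with uniformly bounded derivatives, this pointwise bound transfers to the original chart, and hence the complex Laplacian $\sum_{i}\partial_{z_{i}}\partial_{\bar z_{i}}g_{k\bar l,m}$ is bounded in $L^{\infty}(B(x_{0},\epsilon))$ uniformly in $m$.

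Combined with the uniform $C^{0}$ bound on $g_{k\bar l,m}$, the interior $L^{q}$ estimate for the Laplace operator on $\mathbb{C}^{n}\cong\mathbb{R}^{2n}$ (e.g.\ Gilbarg--Trudinger, Theorem~9.11) gives, after possibly shrinking $\epsilon$,
$$\|D^{2}g_{k\bar l,m}\|_{L^{q}(B(x_{0},\epsilon))}\leq C\bigl(\|\Delta g_{k\bar l,m}\|_{L^{q}(B(x_{0},2\epsilon))}+\|g_{k\bar l,m}\|_{L^{q}(B(x_{0},2\epsilon))}\bigr)\leq C',$$
where $D^{2}$ denotes Euclidean second derivatives in the chart. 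Converting $\|D^{2}\|_{L^{q}}$ to the intrinsic norm $\|\nabla^{2}g_{m}\|_{L^{q}}$ appearing in the statement costs only a multiplicative constant, because the reference metric $g$ is $C^{1}$-equivalent to the flat metric in the chart by Proposition~\ref{holochart}.

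The step I expect to be most delicate is the translation-invariance claim: checking that the peak section estimates of Section~\ref{tps} and the specific normalisation leading to Lemma~\ref{pppb} really do propagate to every $y\in B(x_{0},\epsilon)$ with a constant independent of $y$. Morally this is a consequence of uniform bounded geometry, but it requires rerunning the construction in a family of charts moving with $y$ rather than invoking the single chart centred at $x_{0}$ used earlier. Once this uniformity is granted, the Calder\'on--Zygmund step is entirely routine, and no $\log m$ loss appears because only the mixed Hessian (already bounded by $C$, not $C\log m$) enters the Laplacian.
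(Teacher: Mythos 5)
Your proposal is correct and takes essentially the same route as the paper, whose entire proof is to apply the interior $L^{p}$ elliptic estimate (Gilbarg--Trudinger, Theorem 9.13) to the components $g_{k\bar l,m}$, with the flat Laplacian controlled uniformly (no $\log m$) via the mixed-Hessian bound of Lemma \ref{pppb} and the lower-order term via the $C^{0}$ control from Theorem \ref{thm1}. The re-centering of the chart and peak-section construction at points $y$ near $x_{0}$, which you rightly flag as the delicate step, is left implicit in the paper but is exactly the uniformity furnished by the hypotheses $|\sec|\leq K$ on $B_{1}(x_{0})$, $\Vol(B_{1}(x_{0}))>v$ and Corollary \ref{secvol}.
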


\begin{proof}
It follows the $L^p$-estimates about the solutions of elliptic partial differential equations. For example, see Theorem 9.13 in \cite{gilt1}.
\end{proof}

\begin{rmk}
Actually, we can obtain a $C^{1,\alpha}$ estimate by combining Corollary \ref{lpestcoro} and Lemma \ref{thm1}. But this estimate is weaker than the estimate we will make below. 
\end{rmk}

Now we need some estimates of Newtonian potential here. Let $\Gamma (x)$ be the fundamental solution of Laplace's equation on $\mathbb{R}^n$, i.e.,
\begin{displaymath}
\Gamma (x) =\Gamma (|x| ) = \left\{ \begin{array}{ll}
\frac{1}{n(2-n)|B_1| } |x|^{2-n}, & \textrm{if $n>2$,}\\
\frac{1}{2\pi} \log|x|, & \textrm{if $n=2$,}\\
\end{array} \right.
\end{displaymath}
where $|B_1 |$ is the volume of unit ball in $\mathbb{R}^n$. Fix $R>0$. For each $f\in L^{\infty} \left( B_R (0) \right) $, let $\omega $ be the Newtonian potential of $f$, then $\omega (x) =\int_{B_R (0)} \Gamma (x-y) f(y) dy$, $ \forall x\in \mathbb{R}^n $. The classical theory of Newtonian potential gives $D_i \omega (x) =\int_{B_R (0)} D_i \Gamma (x-y) f(y) dy$, where $D_i = \frac{\partial}{\partial x_i}$. By the definition of $\Gamma $, we have $D_i \Gamma (x-y) = \frac{1 }{n|B_1 |  } \cdot \frac{ x_i -y_i}{ |x-y|^n }$. Then we will estimate the difference of of $\nabla \omega $ locally.

\begin{lmm}
\label{newtpot}
There are constant $ C=C(n ) $, $\epsilon =\epsilon (n,R  )$, such that
$$  \left| \nabla \omega (x) -\nabla \omega (0) \right|  \leq C||f||_{L^{\infty}} \cdot |x| \left| \log\left( |x| \right) \right|  ,$$
when $|x|\leq \epsilon $, where $||\cdot ||_{L^{\infty}} $ is the $L^\infty $-norm.
\end{lmm}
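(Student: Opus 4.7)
The plan is to split the difference into a singular near-zone contribution and a smoother far-zone contribution, handled by direct size estimates and by the mean value theorem respectively. Write
\begin{equation*}
\nabla \omega (x) - \nabla \omega (0) = \int_{B_R (0)} \bigl[ \nabla \Gamma (x-y) - \nabla \Gamma (-y) \bigr] f(y) \, dy,
\end{equation*}
and assume $|x| \leq \epsilon \leq R/4$ so that $B_{2|x|}(0) \subset B_R(0)$. The basic pointwise bounds
\begin{equation*}
|\nabla \Gamma (z) | \leq \frac{C}{|z|^{n-1}}, \qquad |D^2 \Gamma (z) | \leq \frac{C}{|z|^n}
\end{equation*}
for $z \neq 0$, with $C=C(n)$, will be used throughout.

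First I would handle the near zone $\{|y| \leq 2|x|\}$ by estimating each of the two gradients of $\Gamma$ separately. Since $|y| \leq 2|x|$ implies $|x-y| \leq 3|x|$, a change of variables gives
\begin{equation*}
\int_{|y| \leq 2|x|} \frac{dy}{|x-y|^{n-1}} \leq \int_{|z| \leq 3|x|} \frac{dz}{|z|^{n-1}} = C(n)\, |x|,
\end{equation*}
and similarly for the integral of $|y|^{1-n}$. Therefore the near-zone contribution is bounded by $C\|f\|_{L^\infty}|x|$.

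Next I would handle the far zone $\{2|x| \leq |y| \leq R\}$ by the mean value theorem applied to $\nabla \Gamma$ along the segment $[-y, x-y]$: for some $\theta \in [0,1]$,
\begin{equation*}
|\nabla \Gamma (x-y) - \nabla \Gamma (-y) | \leq |x| \cdot \sup_{\theta \in [0,1]} |D^2 \Gamma (\theta x - y)| \leq \frac{C |x|}{|\theta x - y|^n}.
\end{equation*}
On the far zone $|\theta x - y| \geq |y| - |x| \geq |y|/2$, hence the integrand is bounded by $C |x| |y|^{-n}$, and integrating in spherical coordinates,
\begin{equation*}
\|f\|_{L^\infty} \cdot C|x| \int_{2|x|}^{R} \frac{dr}{r} = C\|f\|_{L^\infty} |x| \, \log\!\left( \frac{R}{2|x|} \right).
\end{equation*}
Choosing $\epsilon = \epsilon(n,R)$ small enough so that $\log(R/(2|x|)) \leq C(n,R) |\log |x||$ for $|x| \leq \epsilon$, and combining with the near-zone bound (which is dominated by the same expression since $|x| \leq |x||\log|x||$ for $|x|$ small), yields the claim.

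There is no essential obstacle here; it is a standard potential-theoretic computation. The one point worth noting is that the logarithmic loss is genuine and arises precisely because the second derivative of $\Gamma$ is a Calder\'on--Zygmund kernel of critical homogeneity $-n$, so the far-zone integral in $r$ is logarithmically divergent, exactly matching the $|\log|x||$ factor in the statement.
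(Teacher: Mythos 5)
Your proof is correct and follows essentially the same route as the paper: split the integral into a ball near the singularity (estimated by direct size bounds on $\nabla\Gamma$) and the complementary annulus (estimated via the mean value theorem and the bound $|D^2\Gamma(z)|\leq C|z|^{-n}$), producing the logarithmic factor from the critically homogeneous far-zone integral. The only difference is your cut radius $2|x|$ versus the paper's choice $r=-|x|\log|x|$, which is immaterial (your choice is in fact a bit cleaner); just note that to get $C=C(n)$ rather than $C(n,R)$ in the final bound you should absorb the $\log R$ term by shrinking $\epsilon(n,R)$, exactly as you indicate.
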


\begin{proof}
Since $D_i \Gamma (x) = \frac{1 }{n|B_1 |  } \cdot \frac{ x_i }{ |x|^n }$, $ i=1,\cdots ,n$, a direct computation shows that $$ \left| \nabla^2 \Gamma (x) \right|\leq \frac{2n^2 }{|B_1|} |x|^{-n} .$$

Let $r \leq \min\left\lbrace \frac{1}{10R}, \frac{R}{10} \right\rbrace $. If $|x|\leq \frac{r}{5}$, we have
\begin{eqnarray*}
D_i \omega (x) - D_i \omega (0) & = & \int_{B_{r} (0)} \left( D_i \Gamma (x-y) - D_i\Gamma (-y) \right) f(y) dy \\
& & + \int_{B_{R} (0) -B_{r} (0)} \left( D_i \Gamma (x-y) - D_i\Gamma (-y) \right) f(y) dy .
\end{eqnarray*} 

When $|y|\geq r$, we can find $t\in (0,1)$, such that $$\left| D_i \Gamma (x-y) - D_i\Gamma (-y) \right| \leq |x|\cdot\left| \nabla^2 \Gamma (tx-y) \right| .$$ 

Without loss of generality, we can assume that $||f||_{L^{\infty}} =1$. Then 
\begin{eqnarray*}
& & \left|\int_{B_{R} (0) -B_{r} (0)} \left( D_i \Gamma (x-y) - D_i\Gamma (-y) \right) f(y) dy \right| \\  
& \leq &  |x| \int_{B_{R} (0) -B_{r} (0)} \left|\nabla^2 \Gamma (tx-y) \right| dy \\
& = & 2^{n+1}n^3 |x| \left( \log(R) -\log(r)  \right) \leq 2^{n+2}n^3 |x| \left| \log(r) \right| .
\end{eqnarray*} 

On the small ball $B_r (0)$, we can obtain another estimate as follows. 
\begin{eqnarray*}
\left|\int_{B_{r} (0)} \left( D_i \Gamma (x-y) - D_i\Gamma (-y) \right) f(y) dy \right| & \leq &  2\int_{B_{2r} (0)} \left| D_i\Gamma (-y)\right|  dy \\
& \leq &  2\int_{B_{2r} (0)}  \frac{|y|^{1-n}}{n|B_1|}  dy = 4r.
\end{eqnarray*} 

Combine the estimation of this two integrals, we have
\begin{eqnarray*}
 \left| \nabla \omega (x) -\nabla \omega (0) \right|  & \leq & 4 r + 2^{n+1}n^3 |x| \left| \log(r) \right| .
\end{eqnarray*}

Fix $\epsilon = \min\left\lbrace \frac{1}{200R^2 }, \frac{R^2 }{200} \right\rbrace $. If $|x|\leq \epsilon $, then we can choose $r=-|x|\log(|x|)$. It is easy to check that $r \leq \min\left\lbrace \frac{1}{10R}, \frac{R}{10} \right\rbrace $, $r>5|x|$, and thus
\begin{eqnarray*}
 \left| \nabla \omega (x) -\nabla \omega (0) \right|  & \leq & -4 |x|\log(|x|) + 2^{n+1}n^3 |x| \left| \log(r) \right| \\
 & \leq &  2^{n+3}n^3 |x|\log(|x|) .
\end{eqnarray*}

This completes the proof.
\end{proof}

Since $\Delta \omega =f$, we can obtain a similar estimate about the solution of Poisson's equation as following:
\begin{lmm}
\label{poissonc1alpha}
If $f\in C^\infty \left( B_R(0) \right)$, and $|f|+|\Delta f|\leq K$, then we can find constant $ C=C(n ,K ) $, $\epsilon =\epsilon (n,R,K  )$, such that for each $|x|\leq \epsilon $,
$$  \left| \nabla f (x) -\nabla f (0) \right|  \leq C |x| \left| \log\left( |x| \right) \right|  .$$ 
\end{lmm}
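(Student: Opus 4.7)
The plan is to reduce this statement to Lemma \ref{newtpot} by means of a Newton-potential decomposition. Set $\rho = R/2$ and consider the Newtonian potential
\[
 u(x) = \int_{B_\rho (0)} \Gamma (x-y)\, \Delta f(y)\, dy,
\]
so that $\Delta u = \Delta f$ on $B_\rho (0)$ and $h := f-u$ is harmonic on $B_\rho (0)$. Because $\|\Delta f\|_{L^\infty (B_\rho (0))} \leq K$, Lemma \ref{newtpot} (applied on the ball $B_\rho$ with source $\Delta f$) yields constants $C_1 = C_1(n)$ and $\epsilon_1 = \epsilon_1 (n,R)$ such that
\[
 \bigl|\nabla u (x) - \nabla u(0)\bigr| \leq C_1 K\, |x|\, \bigl|\log |x|\bigr|, \qquad |x|\leq \epsilon_1 .
\]

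Next I would bound the harmonic remainder $h$. Standard Newtonian-potential $L^\infty$-bounds give $|u| \leq C_2 (n,R) K$ on $B_\rho (0)$, and since $|f|\leq K$ we get $|h|\leq C_3(n,R) K$ on $B_\rho(0)$. Because $h$ is harmonic, interior derivative estimates on $B_{\rho/2}(0)$ yield
\[
 \sup_{B_{\rho /2}(0)} |\nabla^2 h| \leq C_4 (n,R)\, K,
\]
hence $|\nabla h(x) - \nabla h(0)| \leq C_4(n,R)\, K\, |x|$ for $|x|\leq \rho /2$. Adding the two contributions and using $|x| \leq |x|\,|\log |x||$ once $|x|$ is small enough gives the claimed estimate with a constant $C = C(n,K)$ on a ball $B_{\epsilon}(0)$ where $\epsilon = \epsilon(n,R,K)$ is chosen small enough to apply both bounds and ensure $|\log|x||\geq 1$.

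There is essentially no analytic obstacle: the only subtlety is making sure the harmonic part really is controlled independently of the cancellations in $\Delta f$, which is why I isolate it via the Newton potential rather than working directly with $f$. The $|x||\log |x||$ factor is forced by Lemma \ref{newtpot}, and the harmonic piece contributes only the milder $|x|$ term, so the composite rate is exactly the one stated.
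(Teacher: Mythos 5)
Your proposal is correct and is essentially the paper's own argument: decompose $f$ into the Newtonian potential of $\Delta f$ (handled by Lemma \ref{newtpot}, giving the $|x|\left|\log|x|\right|$ term) plus a harmonic remainder controlled by interior derivative estimates, which contributes only an $O(|x|)$ term. No substantive difference from the paper's proof.
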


\begin{proof}
Let $\psi $ be the Newtonian potential of $\Delta f$, then we have $\Delta (\psi -f) =0$, and it is immediate that there exists a constant $C_1 =C_1 (n,R,K)>0$ such that $|\psi |\leq C_1 $.

By the interior derivative estimates for harmonic functions, there is a constant $C_2 =C_2 (n,R,K) ,$ satisfies that $|\nabla^2 (\psi -f) |\leq C_2 $ on $B_{\frac{R}{2}} (0)$. By Lemma \ref{newtpot}, $\left| \nabla \psi (x) -\nabla \psi (0) \right|  \leq C_3 |x| \left| \log\left( |x| \right) \right|  $, when $|x|\leq \epsilon $, where $\epsilon $ and $C_3 $ are constants depend only on $n$, $R$, $K$.

Hence we have
\begin{eqnarray*}
 \left| \nabla f (x) -\nabla f (0) \right|  & \leq & \left| \nabla \psi (x) -\nabla \psi (0) \right| +  \left| \nabla (\psi -f) (x) -\nabla (\psi -f) (0) \right| \\
 & \leq &  C_3 |x| \left| \log\left( |x| \right) \right| + C_2 |x| ,
\end{eqnarray*}
and the lemma follows.
\end{proof}

Now we will consider the $C^{1,\alpha}$-convergence of Bergman metrics. Recall that $$ R_{k\bar{k} i\bar{j}} = -\frac{\partial^2 g_{i\bar{j}}}{\partial z_k \partial \zbar_k} - g^{s\bar{t}} \frac{\partial g_{s\bar{j}}}{\partial z_k} \frac{\partial g_{i\bar{t}}}{\partial \zbar_k} ,$$
then we can apply Lemma \ref{poissonc1alpha} to $g_{i\bar{j}}$:
\begin{lmm}
\label{oric1a}
Under the conditions stated in the previous section, there exists a positive constant $\epsilon $, $C$, $m_0 $ depends only on $t$, $Q$, $r$, $n$, $K$, such that for each $|x-y|\leq \epsilon $,
\begin{eqnarray*}
\left| \nabla g_{i\bar{j} } (x) -\nabla g_{i\bar{j} } (y) \right|  \leq C |x-y| \left| \log\left( |x-y| \right) \right|  , 
\end{eqnarray*}
when $m>m_0 $,  where $g_{i\bar{j} } = g \left( \frac{\partial }{\partial z_i} ,\frac{\partial }{\partial \zbar_j }  \right) $.
\end{lmm}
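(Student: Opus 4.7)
The plan is to apply Lemma \ref{poissonc1alpha} directly to each component $g_{i\bar j}$, viewed as a smooth function on the holomorphic chart (identifying $\mathbb{C}^n \cong \mathbb{R}^{2n}$, so that the Euclidean Laplacian reads $\Delta = 4\sum_k \partial_{z_k}\partial_{\zbar_k}$). Lemma \ref{poissonc1alpha} requires two inputs: a sup-norm bound on $g_{i\bar j}$ and a sup-norm bound on $\Delta g_{i\bar j}$ throughout a ball of fixed radius $R$ in the chart.

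The sup bound on $g_{i\bar j}$ is immediate from the assumption $\|(M,g,x_0)\|^{holo}_{C^{1,\alpha},r}\leq Q$ together with Proposition \ref{holochart}. To bound $\Delta g_{i\bar j}$, I would sum the curvature identity
\[
R_{k\bar k i\bar j} = -\frac{\partial^2 g_{i\bar j}}{\partial z_k\,\partial \zbar_k} - g^{s\bar t}\frac{\partial g_{s\bar j}}{\partial z_k}\frac{\partial g_{i\bar t}}{\partial \zbar_k}
\]
over $k$, which rearranges to
\[
-\tfrac{1}{4}\,\Delta g_{i\bar j} \;=\; \sum_k R_{k\bar k i\bar j} \;+\; \sum_{k,s,t} g^{s\bar t}\,\partial_{z_k} g_{s\bar j}\,\partial_{\zbar_k} g_{i\bar t}.
\]
The curvature sum is bounded by a constant depending only on $K$ and $n$ since $|\sec|\leq K$ forces the full curvature tensor to be uniformly bounded. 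The quadratic term in derivatives of $g$ is bounded by a constant depending only on $Q$ and $n$, since the holomorphic $C^{1,\alpha}$ norm controls $|\nabla g|$, and $g$ (hence $g^{s\bar t}$) is uniformly comparable to the flat metric on the chart by Proposition \ref{holochart}.

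With both $|g_{i\bar j}|$ and $|\Delta g_{i\bar j}|$ uniformly bounded on a sub-chart of definite size, I would fix an arbitrary base point $y$ in a slightly smaller ball and translate it to the origin, so that a ball of definite radius is still contained in the chart. Lemma \ref{poissonc1alpha} applied to $f=g_{i\bar j}$ at this translated origin then produces exactly the required inequality $|\nabla g_{i\bar j}(x)-\nabla g_{i\bar j}(y)|\leq C|x-y||\log|x-y||$ whenever $|x-y|\leq \epsilon$, with constants $\epsilon$ and $C$ depending only on $t,Q,r,n,K$.

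The only subtle point is to ensure these two bounds hold uniformly throughout the chart, not merely at the distinguished point $x_0$ where Proposition \ref{holochart} provides the vanishing of first derivatives. This is not a real obstacle: the holomorphic norm hypothesis and the sectional-curvature bound are both global on the chart, so they yield uniform control of $g$, $g^{-1}$, $\nabla g$ and the curvature tensor, which is all the curvature identity needs.
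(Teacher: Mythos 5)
Your proposal is correct and follows essentially the same route as the paper: the paper's proof consists precisely of invoking the curvature identity $R_{k\bar k i\bar j} = -\partial_{z_k}\partial_{\zbar_k} g_{i\bar j} - g^{s\bar t}\partial_{z_k} g_{s\bar j}\,\partial_{\zbar_k} g_{i\bar t}$ to bound $\Delta g_{i\bar j}$ via the sectional curvature bound and the $C^{1,\alpha}$ chart control, and then applying Lemma \ref{poissonc1alpha} to $g_{i\bar j}$. Your added remarks on uniformity over base points and on translating $y$ to the origin merely make explicit what the paper leaves implicit.
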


Now we are ready to prove Theorem \ref{thm2}.

\noindent \textbf{Proof of Theorem \ref{thm2}: }
It is sufficient to show that 
\begin{eqnarray*}
\left| \nabla g_{i\bar{j} ,m } (z) -\nabla g_{i\bar{j} ,m } (0) - \nabla g_{i\bar{j} } (z) + \nabla g_{i\bar{j} } (0) \right|  \leq C |z|^{\alpha } m^{\frac{-1+\alpha }{2}}  \left| \log(m) \right|^{\alpha}   , 
\end{eqnarray*}
on the fixed uniform holomorphic chart $(z_1 ,\cdots ,z_n )$. 

Clearly, Theorem \ref{thm1} implies that
\begin{eqnarray*}
& & \left| \nabla g_{i\bar{j} ,m } (z) -\nabla g_{i\bar{j} ,m } (0) - \nabla g_{i\bar{j} } (z) + \nabla g_{i\bar{j} } (0) \right| \\
& \leq &  \left| \nabla g_{i\bar{j} ,m } (z)  - \nabla g_{i\bar{j} } (z) \right| + \left|  \nabla g_{i\bar{j} ,m } (0) - \nabla g_{i\bar{j} } (0) \right|  \\
& \leq & C_1 m^{-\frac{1}{2}}  , 
\end{eqnarray*}
where $C_1 = C_1 (K,v,t)$ is a constant.

By Lemma \ref{pppp}, Lemma \ref{pppb} and Lemma \ref{oric1a}, we can conclude that there are constants $\epsilon = \epsilon (K,v,t) $ and $C_2 = C_2 (K,v,t) $, such that if $|z| \leq \epsilon $
\begin{eqnarray*}
& & \left| \nabla g_{i\bar{j} ,m } (z) -\nabla g_{i\bar{j} ,m } (0) - \nabla g_{i\bar{j} } (z) + \nabla g_{i\bar{j} } (0) \right| \\
& \leq &  \left| \nabla g_{i\bar{j} ,m } (z)  - \nabla g_{i\bar{j} ,m } (0) \right| + \left|  \nabla g_{i\bar{j} } (z) - \nabla g_{i\bar{j} } (0) \right|  \\
& \leq & C_2 |z|   \log(m) +C_2 |z|   \left| \log\left( |z|  \right) \right|  . 
\end{eqnarray*}

Let $\delta = m^{-\frac{1}{2}} \left| \log(m) \right|^{-1} $. If $|z|  \geq \delta$, an easy computation shows that
\begin{eqnarray*}
& & \left| \nabla g_{i\bar{j} ,m } (z) -\nabla g_{i\bar{j} ,m } (0) - \nabla g_{i\bar{j} } (z) + \nabla g_{i\bar{j} } (0) \right| \\
& \leq &  |z|^{\alpha} m^{\frac{\alpha}{2}} \left| \log(m) \right|^{\alpha} \cdot C_1 m^{-\frac{1}{2}} \\
& \leq &  C_1 |z|^{\alpha } m^{\frac{-1+\alpha }{2}}  \left| \log(m) \right|^{\alpha} .
\end{eqnarray*}
Since $ \lim_{m\to\infty} m^{-\frac{1}{2}} \left| \log(m) \right|^{-1} =0 $, we can find $m_1 = m_1 (\epsilon ) >0 $ such that $\delta\leq\epsilon$, $\forall m>m_1 $. By above argument, if $|z|  \leq \delta $ and $m>m_1 $, we have
\begin{eqnarray*}
& & \left| \nabla g_{i\bar{j} ,m } (z) -\nabla g_{i\bar{j} ,m } (0) - \nabla g_{i\bar{j} } (z) + \nabla g_{i\bar{j} } (0) \right| \\
& \leq & C_2 |z|^{\alpha} \delta^{1-\alpha} \log(m) +C_2 |z|^{\alpha} |z|^{1-\alpha} \left| \log\left( |z|  \right) \right|  \\
& \leq & C_2 |z|^{\alpha } \left( m^{\frac{-1+\alpha }{2}}  \left| \log(m) \right|^{\alpha} + |z|^{1-\alpha} \left| \log\left( |z|  \right) \right| \right) .
\end{eqnarray*}

A direct computation shows that 
\begin{eqnarray*}
\delta^{1-\alpha} \left| \log\left( \delta \right) \right| \leq 2 m^{-\frac{1-\alpha }{2}} \log(m) .
\end{eqnarray*}
We thus get
\begin{eqnarray*}
& & \left| \nabla g_{i\bar{j} ,m } (z) -\nabla g_{i\bar{j} ,m } (0) - \nabla g_{i\bar{j} } (z) + \nabla g_{i\bar{j} } (0) \right| \\
& \leq & C_2 |z|^{\alpha } \left( m^{\frac{-1+\alpha }{2}}  \left| \log(m) \right|^{\alpha} + |z|^{1-\alpha} \left| \log\left( |z|  \right) \right| \right) \\
& \leq & 2C_2 |z|^{\alpha }  m^{\frac{-1+\alpha }{2}}  \left| \log(m) \right|^{\alpha} ,
\end{eqnarray*}
when $ |z| \leq\delta $, and $m>m_1 +m_2 $, which proves the theorem. \qed

\section{Examples}
\label{exp}
Examples \ref{swc}-\ref{hgs} show that we cannot control the $C^0 $-$convergence$ rate of Bergman metrics if we drop only one of the conditions $\sec\geq K$, $\sec\leq K$ or $\Vol B_1 ( x_0 ) >v$ in Theorem \ref{thm1}.

Example \ref{sharp} provides a $C^{1,1}$ polarized pointed K\"ahler manifold $\left( M,g,L,x_0 \right)$ that satisfies $$ \liminf_{m\to\infty } \sqrt{m} \left\Vert \nabla g_{m} \right\Vert > 0 .$$

Example \ref{oscillation} demonstrates that the conditions in Theorem \ref{thm1} are not sufficient to control the convergence rate of $\nabla^2 g_m $ in $L^1$ norm.

\begin{exap}
\label{swc}
\upshape Let $M=\mathbb{C}P^1$, $L=\mathcal{O} (1)$, $\theta\in (0,1)$, then we have two open sets 
\begin{eqnarray*}
U_0 & = & \left\lbrace \left[ 1 , w \right]\in \mathbb{C}P^1  \right\rbrace ,\\
U_1 & = & \left\lbrace \left[ z , 1 \right]\in \mathbb{C}P^1 : \left| z \right| <1 \right\rbrace
\end{eqnarray*} 
in $M$, such that $U_0 \cup U_1 =M $. Choose a radial cut-off function $\eta \in C_0^{\infty} \left( B_1(0) \right) \subset C_0^{\infty } \left( \mathbb{C} \right)$ such that $0\leq\eta\leq 1$ and $\eta =1$ on $B_{\frac{1}{2}} (0)$. Then we construct a sequence of $C^{1,1}$ functions $f_n \in C^{1,1} \left( \mathbb{C} \right) $,
\begin{displaymath}
f_n (z) = \left\{ \begin{array}{ll}
\frac{1}{2}(\theta -1)\left( e^{2n} |z|^2 -1-2n \right), & \textrm{if $|z|\leq e^{-n} $,}\\
\eta(|z|)(\theta -1)\log(|z|) - \left( 1-\eta(|z|) \right) \log\left( 1+|z|^2\right),  & \textrm{if $|z|> e^{-n} $.}
\end{array} \right.
\end{displaymath}

Clearly, $\Delta f_n|_{\left\lbrace |z|<e^{-n} \right\rbrace} \leq 0$. Then we can find a sequence of functions $u_{n} =u_n (|z|) \in C^{\infty} (\mathbb{C} ) $ such that $\left| u_n - f_n \right| \leq 1 $, $u_n = f_n$ on $\mathbb{C} - B_{e^{-n}} (0)$, and $\Delta u_n  \leq 0 $ on $B_{e^{-n}} (0)$. Then there is a K\"ahler metric $g_{n}'  $ on $M$ such that $g'_{n,1\bar{1}} = e^{-2u_n } $ on $U_0 $, and $g'_{n,1\bar{1}} = \frac{1}{\left( 1+|z|^2\right)^2} $ on $U_1 $. Let $g_n =\frac{g_{n}'}{\int_M \omega_{g_{n}' }}$, it's clear that $\omega_{g_n} \in c_{1} (L) $, and there exists a constant $K>0$ such that $\sec>-K$, $\forall n$.

For each given $m\in\mathbb{N}$, there is a basis $\left\lbrace z_0^j z_1^{m-j} \right\rbrace_{j=0}^m $ of $H^0 \left( M,L^m \right) $. Since $f_n (z)\leq (\theta - 1)\log(|z|)$ on $B_{\frac{1}{2}} (0)$ and $|z|^{1-\theta } \in L^{2} \left( B_1 (0) \right) $, we have a constant $c >0 $ satisfying $c < \int_M \omega_{g_{n}' } <\frac{1}{c}$, $\forall n\in\mathbb{N} $. Let $h_n=\frac{e^{\varphi_n}}{\left| z_0 \right|^2 +\left| z_1 \right|^2 }$ be the unique hermitian metric on $L$ such that $$Ric(h_n)=\omega_{g_n} = -\frac{\sqrt{-1}}{2\pi } \partial\partialbar \varphi_n +\omega_{FS} ,$$ and $\int_{M} \varphi_n \omega_{FS} =0 $. Apply the boundedness of the Green operator to $\left( M,\omega_{FS} \right)$, we have a constant $t>0$ such that $$\left\Vert \varphi_{n} \right\Vert_{L^2 (M)} = \left\Vert G_{FS}\Delta_{FS} \varphi_{n} \right\Vert_{L^2 (M)} \leq t\left\Vert \Delta_{FS} \varphi_{n} \right\Vert_{L^2 (M)}  ,\; \forall n\in\mathbb{N} .$$ 

Now we have a constant $C>0$ such that
$$ \sup_{M} \left| \varphi_n \right| \leq \sqrt{C} \left( \left\Vert \omega_g -\omega_{FS} \right\Vert_{L^{2} (M)} + \left\Vert \varphi_{n} \right\Vert_{L^{2} (M)} \right) \leq C ,\; \forall n\in\mathbb{N} .$$
Since $h_n $ is unique ,$\partial\partialbar \varphi_n (e^{ix} w) = \partial\partialbar \varphi_n ( w)$, we have $\varphi_n (w) =\varphi_n (|w|) $ on $U_{0}$, and hence we have
\begin{eqnarray*}
\int_{M} \left\langle z_0^j z_1^{m-j} , z_0^k z_1^{m-k} \right\rangle_{h_n^m} & = & 0, \textrm{ if $j\neq k$,} \\
\int_{M} \left\langle z_0^j z_1^{m-j} , z_0^j z_1^{m-j} \right\rangle_{h_n^m} & = & a_{m,n} .
\end{eqnarray*}

Now we can find a constant $M>0$ such that $ a_{m,n} \in \left( \frac{1}{M} ,M \right) $, $\forall n\in\mathbb{N} $. Then the Bergman metrics $\frac{\sqrt{-1}}{2\pi} \partial\partialbar \log \left( \sum_{j=1}^{m+1} a^{-\frac{1}{2}}_{m,n} |z_{0}|^{2j} |z_1 |^{2m-2j} \right) \leq C' \omega_{FS} $ for some constant $C'>0$. But $$\lim_{n\to \infty} \frac{g_{n,1\bar{1}} ([1,0])}{g_{FS,1\bar{1}} ([1,0]) } =\infty ,$$ it shows that the distances between $g_n$ and the Bergman metrics $g_{n,m}$ satisfying
$$\liminf_{n\to \infty } \sup_{M} \left\Vert g_{n,m } -g_n \right\Vert\geq 1,$$
for each given $m\in\mathbb{N}$.

So we cannot control the rate of convergence of Bergman metrics if we only assume that $\sec\geq -K$ and $\Vol\left( B_1 (x)\right) >v$, $\forall x\in M$.
\end{exap}

\begin{exap}
\upshape Let $M=\mathbb{C}P^1$, $L=\mathcal{O} (1)$, choose an open covering $U_0 ,$ $U_1 $ of $M$, where 
\begin{eqnarray*}
U_0 & = & \left\lbrace \left[ 1 , w \right]\in \mathbb{C}P^1 : \left| w \right| <2 \right\rbrace ,\\
U_1 & = & \left\lbrace \left[ z , 1 \right]\in \mathbb{C}P^1 : \left| z \right| <2 \right\rbrace .
\end{eqnarray*}

Pick a radial cut-off function $\eta = \eta (|z|) \in C_0^{\infty} \left( B_2(0) \right) \subset C_0^{\infty } \left( \mathbb{C} \right)$, s.t. $0\leq\eta\leq 1$ and $\eta =1$ on $B_{\frac{3}{2}} (0)$. For each $n\in\mathbb{N} $, we define $$ f_n (z) = \eta (z) + \left( 1-\eta(z) \right) \eta \left(  \frac{z}{e^{e^n}} \right) \frac{1}{|z|\log(|z|)} + \left( 1-\eta(z) \right)  \left( 1-\eta\left( \frac{z}{e^{e^n}} \right) \right)\frac{1}{e^{n} |z|} .$$

It's obvious that there is a sequence of K\"ahler metrics $g'_n$ on $M$ such that
\begin{displaymath}
g'_{n,1\bar{1} } = \left\{ \begin{array}{ll}
e^{4e^n } f_n^2 \left( e^{2e^n} w \right) & \textrm{on $U_0 $,}\\
e^{4e^n } f_n^2 \left( e^{2e^n} z \right) & \textrm{on $U_1 $.}
\end{array} \right.
\end{displaymath}
 Through a direct calculation, we get $ 2\pi < \Vol\left( M,g'_n \right) <20\pi $, $\forall n\in\mathbb{N} $, and the sectional curvature of $g'_n$ satisfies that $ |\sec |\leq 100\pi $.

Let $g_n =\frac{ 1 }{\sqrt{\Vol(M,g'_n)}} g'_n $, then $\Vol(M,g_n )=1$ shows that the K\"ahler form $\omega_{n}$ associated with $g_n $ belongs to $c_1(L)$, $\forall n\in\mathbb{N}$. Now we can choose a hermitian metric $h_n $ on $L$ for each $n\in\mathbb{N} $, such that $Ric(h_n )=\omega_n $. Since $g_n $ is invariant under the following $S^1$-action:
\begin{eqnarray*}
S^1 \times M & \to & M,\\
\left( e^{\theta \sqrt{-1} } , [z,w]   \right)  & \mapsto &  \left[e^{\theta \sqrt{-1} } z,w \right],
\end{eqnarray*}
$\partial\partialbar \log\left( h_n \right)  = -2\pi \omega_n $ shows that $h_n $ can be represented as radial functions on $U_0 $ and $U_1 $, if we choose the standard trivialization. It follows that $ \left\lbrace z^j w^{m-j} \right\rbrace_{j=0}^{m} $ becomes an $L^2 $-orthogonal basis of $H^{0} \left( M,L^m \right) $, $\forall n\in\mathbb{N}$. Write $ a_{n,m,j} = \left( \int_{M} \left\Vert z^j w^{m-j} \right\Vert^2_{h^m} dV_{g_n}  \right)^{-\frac{1}{2}}  $, then $a_{n,m,j} =a_{n,m,m-j} $, and the $m$-th Bergman metric of $g_n $, which is $g_{n,m}$, satisfies that 
\begin{eqnarray*}
g_{n,m,1\bar{1}} & = & \frac{1}{2\pi } \frac{\partial^2}{\partial z\partial \zbar } \log\left( \sum_{j=0}^{m}  \left| a_{n,m,j} z^{j} \right|^2  \right) \\
& = & \frac{1}{2\pi }  \frac{ \sum_{j=1}^{m}  j^2 \left| a_{n,m,j} z^{j-1} \right|^2 }{ \sum_{j=0}^{m}  \left| a_{n,m,j} z^{j} \right|^2 } - \frac{1}{2\pi } \frac{  \left( \sum_{j=1}^{m}  j\left| a_{n,m,j} z^{2j-1} \right|^2  \right)^2 }{ \left( \sum_{j=0}^{m}  \left| a_{n,m,j} z^{j} \right|^2 \right)^2 }, 
\end{eqnarray*}
on $U_1 $. Setting $z=1$ and $m=2l+1$ one obtains
\begin{eqnarray*}
g_{n,m,1\bar{1}} \left( [1,1] \right) & = & \frac{1}{2\pi }  \frac{ \sum_{j=1}^{m}  j^2 \left| a_{n,m,j} \right|^2 }{ \sum_{j=0}^{m}  \left| a_{n,m,j} \right|^2 } - \frac{1}{2\pi } \frac{  \left( \sum_{j=1}^{m}  j\left| a_{n,m,j} \right|^2  \right)^2 }{ \left( \sum_{j=0}^{m}  \left| a_{n,m,j} \right|^2 \right)^2 } \\
& \geq &  \frac{ m^2 \left| a_{n,m,0} \right|^4 + \sum_{1\leq j < k\leq m} \left(  j-k  \right)^2 \left| a_{n,m,j} \right|^2 \left| a_{n,m,k} \right|^2 }{ 2\pi (m+1)^2 \left( \sum_{j=0}^{m}  \left| a_{n,m,j} \right|^4 \right)  } \\
& \geq & \frac{1}{4\pi (m+1)^2 },
\end{eqnarray*}
when $l>m_0 +2 $. Since $\lim_{n\to \infty } g_{n,1\bar{1}} \left( [1,1] \right) =0 $, we get $$\liminf_{n\to \infty } \sup_{M} \left\Vert g_{n,2m+1 } -g_n \right\Vert\geq 1,$$
for each given $m>m_0 +2$.
\end{exap}

 \begin{exap}
 \label{hgs}
\upshape Let $ U=\mathbb{R}^2 $ be a riemannian manifold with metric $$ g_{U} = dr^2 +\psi^2 (r)d\theta^2 ,$$ where $\psi  $ be a non-negative smooth function on $\mathbb{R} $, such that $ \psi (r) = r $ on $[0,10]$, $\psi >0$ on $ (0,\infty ) $, and $ \psi (r)=e^{-r} $ when $r>20$.

When $k>30 $, we can construct a Riemannian manifold $\left( k\mathbb{T}^2 ,h \right) $ such that $\Vol_h \left( k\mathbb{T}^2  \right) = 10 $, where $k\mathbb{T}^2 = \mathbb{T}^2 \#\cdots \#\mathbb{T}^2 $ ($k$ times). By gluing $B_{\log(k)} (0)\subset U$ and $ k\mathbb{T}^2 $, we can get a Riemannian manifold $(M,g)$ that satisfies $ \Vol_{g} (M)= 400 $, $ \Vol B_1 (p) = \pi $, and $\sec = 0$ on $B_1 (p)$, where $p=0\in U$. Since $dim M=2$, there exist a complex structure on $M$ such that $(M,g)$ becomes a K\"ahler manifold. Let $L=\mathcal{O}_{400p}$. It's clear that $\omega_g \in c_1 (L) $.

For each $m\in\mathbb{N}$, choose $k>30+400m$, then we have $H^0 (M,L^m ) \cong \mathbb{C} $. If not, the Riemann-Hurwitz formula shows that $ -\chi_M =-2deg(f) +\sum_{p\in M} (e_p -1) $ for each meromorphic function on $M$, where $ e_p $ is the ramification index under $f$, and hence $2k-2=-\chi_M \leq\sum_{p\in M} (e_p -1) \leq 400m $.

It shows that if we drop the condition $\sec >-t$, then although the local geometric structure can still be controlled, $m_0 $ may become too large.
\end{exap}

\begin{exap}
\label{sharp}
\upshape Let $M=\mathbb{C}P^1$, $L=\mathcal{O} (1)$, then we consider the dense open subset 
\begin{eqnarray*}
U & = & \left\lbrace \left[ z , 1 \right]\in \mathbb{C}P^1 : z\in\mathbb{C} \right\rbrace
\end{eqnarray*} 
of $M$ and the classical frame $e_L$ of $L$ over $U$. Choose a radial cut-off function $\eta \in C_0^{\infty} \left( B_1(0) \right) $, such that $0\leq\eta\leq 1$, $ \left| \nabla \eta \right| \leq 3 $, $ \left| \nabla^2 \eta \right| \leq 30 $ and $\eta =1$ on $B_{\frac{1}{2}} (0)$. Of course, $$ h \left( e_L ,e_L \right) = \frac{1}{ 1+|z|^2 } e^{\frac{1}{400} {\eta (z)|z|^3 \left( z+\zbar \right)} }  $$ is a $C^{3,1}$ hermitian metric on $L$, and, in consequence, $$ \omega = Ric(h) = \omega_{FS} - \frac{\sqrt{-1}}{800\pi } \partial\partialbar \left(\eta (z) |z|^3 \left( z+\zbar \right) \right) $$
is a $C^{1,1}$ polarized K\"ahler metric with bounded sectional curvature, where $\omega_{FS} $ is the Fubini-Study metric. For each given $k\in\mathbb{Z}_+ $, a trivial verification shows that $$ T^m_k = \left( \int_{M} |z|^{2k} \left\langle e_L ,e_L \right\rangle_{h^m} \omega \right)^{-\frac{1}{2}} z^k e_L $$
are peak sections of the point $ x_0 =[0,1] \in U $. Write $a= h \left( e_L ,e_L \right) $, then we can conclude that
$$ \int_{M} |z|^{2k} \left\langle e_L ,e_L \right\rangle_{h^m} \omega = \frac{k!}{m^{k+1}} +O\left( \frac{1}{m^{k+2}} \right) ,$$
and similarly,
\begin{eqnarray*}
\int_{M} |z|^{2k} z \left\langle e_L ,e_L \right\rangle_{h^m} \omega & = & \frac{4k+1}{800 } \cdot \frac{\sqrt{\pi} (2k+3)!!}{2^{k+3}m^{k+\frac{5}{2}}} +O\left( \frac{1}{m^{k+3} } \right) ,
\end{eqnarray*}
for each given $k\in\mathbb{Z}_+ $, where $O\left( \frac{1}{m^k } \right) $ denotes a quantity dominated by $ \frac{C}{m^k } $ with the constant $C$ depending only on $k$.

Choosing an $L^2$ orthonormal basis $\left\lbrace S^{m}_{j} \right\rbrace_{j=0}^{m}$ of $H^0 \left( M,L^m \right)$ such that the local representation $S^{m}_{j} =f^m_j e_L $ satisfies that $\beta_{ii} >0$, and $$ \frac{\partial^i f^m_j }{\partial z^i} \left( x_0 \right) =0 ,\; \forall  i <j .$$
It is easy to check that there are constants $\beta^m_{ij} $ such that $T^m_i = \sum_{j=0}^{m} \beta^m_{ij} S^m_j $, and the above conditions implies that $ \beta_{ij}=0 $, $\forall i<j $. It follows that $ \beta^m_{ii} = 1+ O\left( \frac{1}{m} \right) $, $ \sum_{j>i} \left| \beta^m_{ij} \right|^2 = O\left( \frac{1}{m^{ \frac{3}{2} } } \right) $ and $ \sum_{j>i+1} \left| \beta^m_{ij} \right|^2 = O\left( \frac{1}{m^2 } \right) $. Consider the $L^2$ inner product of $H^0 \left( M,L^m \right) $, we have
\begin{eqnarray*}
\beta_{01}^m \bar{\beta}^m_{11} + \sum_{j=2}^m \beta_{0j}^m \bar{\beta}^m_{1j} & = & \int_{M} \left\langle T^m_0 ,T^m_1 \right\rangle_{h^m} \omega \\
& = & \frac{3\sqrt{\pi} }{6400m } + O\left( \frac{1}{m^{\frac{3}{2}}} \right) ,
\end{eqnarray*}
hence we can conclude that $$ \beta^m_{01} = \frac{3\sqrt{\pi} }{6400m } + O\left( \frac{1}{m^{\frac{3}{2}}} \right) .$$ 
Similar arguments apply to $T^m_1 $ and $T^m_2 $, one obtains
$$ \beta^m_{12} =  \frac{3\sqrt{\pi} }{512m \sqrt{2} } + O\left( \frac{1}{m^{\frac{3}{2}}} \right) . $$
Since $T^m_0 = \sum_{j=0}^{m} \beta^m_{0j} S^m_j $, we have $$ \beta^m_{00} \frac{\partial f^m_0}{\partial z} \left( x_0 \right) + \beta^m_{01} \frac{\partial f^m_1}{\partial z} \left( x_0 \right) =0 ,$$
hence that $$ \frac{\partial f^m_0}{\partial z} \left( x_0 \right) = - \frac{3\sqrt{\pi} }{6400 } + O\left( \frac{1}{m^{\frac{1}{2}}} \right) .$$
Likewise, $$ \frac{\partial^2 f^m_1}{\partial z^2 } \left( x_0 \right) = - \frac{3\sqrt{\pi} m^{\frac{1}{2}} }{512 } +O(1) .$$
Then we have
\begin{eqnarray*}
\frac{\partial g_{m,1\bar{1}}}{\partial z} \left( x_0 \right) & = & \frac{1}{2\pi m} \frac{ \partial^2 f^m_1 /\partial z^2  \cdot \partial \bar{f}^m_1 / \partial\zbar }{ \left| f_0^m \right|^2 } \\
& & - \frac{1}{\pi m} \frac{ \bar{f}^m_0 \partial {f}^m_0 / \partial z \cdot \partial f^m_1 /\partial z \cdot \partial \bar{f}^m_1 / \partial\zbar   }{  \left| f_0^m \right|^4 } \\
& = & -\frac{369 }{ 128000\sqrt{\pi} m^{\frac{1}{2}} } + O\left( \frac{1}{m} \right) .
\end{eqnarray*}
\end{exap}

\begin{rmk}
Choose a sequence of smooth polarized pointed K\"ahler manifold $\left( M,g_k ,L,x_0 \right)$ such that $ \frac{\partial g_{k,1\bar{1}}}{\partial z} \left( x_0 \right) =0 $, and $ g_k \to g $ in $C^{1,1}$-topology as $k\to\infty $. It is clear that $$ \liminf_{m\to\infty} \lim_{k\to\infty} \sqrt{m} \left| \frac{\partial g_{k,m,1\bar{1}}}{\partial z} \left( x_0 \right) \right| >0 . $$
\end{rmk}

\begin{exap}
\label{oscillation}
\upshape Let $M=\mathbb{C}P^1$, $L=\mathcal{O} (1)$, then we have two open sets 
\begin{eqnarray*}
U_0 & = & \left\lbrace \left[ 1 , w \right]\in \mathbb{C}P^1  \right\rbrace ,\\
U_1 & = & \left\lbrace \left[ z , 1 \right]\in \mathbb{C}P^1 : \left| z \right| <1 \right\rbrace
\end{eqnarray*} 
in $M$, such that $U_0 \cup U_1 =M $. Choose a radial cut-off function $\eta \in C_0^{\infty} \left( B_1(0) \right) \subset C_0^{\infty } \left( \mathbb{C} \right)$, s.t. $0\leq\eta\leq 1$ and $\eta =1$ on $B_{\frac{1}{2}} (0)$. For each $k\in\mathbb{N}$, we define $$\varphi_k = k^{-4} \sin(kz+k\zbar )\sin(\sqrt{-1}kz-\sqrt{-1}k\zbar ) \eta (z) $$ on $U_1 $. Then $h_k = e^{\varphi_k } h_{0} $ gives a Hermitian metric on $L$, where $h_0 $ be the normal metric on $L$, i.e. $h_0 = \frac{1}{|w|^2 +1} $ on $U_0 $, and $h_0 = \frac{1}{|z|^2 +1}$ on $U_1$. Clearly, $Ric(h_0 ) = \omega_{FS} $ on $M$, and hence 
\begin{eqnarray*}
Ric( h_k ) & = & \omega_{FS} - \frac{\sqrt{-1} }{2\pi } \partial\partialbar \varphi_k .
\end{eqnarray*}
For sufficiently large $k$, $Ric\left( h_k  \right) $ is also a K\"ahler form. Let $g_k $ be the K\"ahler metric corresponds to $Ric\left( h_k  \right) $. Recall that $$R_{1\bar{1} 1\bar{1}} = -\frac{\partial^2 g_{1\bar{1}}}{\partial z \partial \zbar } - g^{1\bar{1}} \frac{\partial g_{1\bar{1}}}{\partial z } \frac{\partial g_{1\bar{1}}}{\partial \zbar } ,$$
we can find a constant $C>0 $ such that on $U_1 $, 
\begin{eqnarray}
\left| R_{k, 1\bar{1} 1\bar{1}} + \frac{64 k^4}{\pi }   \varphi_k  - R_{FS,1\bar{1} 1\bar{1}} \right| \leq \frac{C}{k} , \label{rkfs}
\end{eqnarray}
where $ R_{FS,1\bar{1} 1\bar{1}} $ is the curvature of Fubini-Study metric on $U_1 $, and $ R_{k, 1\bar{1} 1\bar{1}} $ is the curvature of $g_k$ on $U_1 $.

It is easy to check that there is a constant $\delta >0$ satisfies that 
\begin{eqnarray}
\liminf_{k\to\infty } \left\Vert  k^4 \varphi_k -f  \right\Vert_{L^1 (M)} >\delta ,\label{oscillationvarphi}
\end{eqnarray}
for each given $f\in L^1 (M)$.

If $\nabla^2 g_{m,k} $ converges to $\nabla^2 g_k $ in $L^1$ norm uniformly as $m\to\infty $, then $ R^{m}_{k, 1\bar{1} 1\bar{1}} $ also converges to $ R_{k, 1\bar{1} 1\bar{1}} $ in $L^1$ sense uniformly, where $ R^m_{k, 1\bar{1} 1\bar{1}} $ is the curvature of $g_{m,k}$ on $U_1 $. But for each fixed $m $, it's clear that the sequence of Bergman metrics $ g_{m,k} $ must be convergence in $C^\infty $-topology as $k\to\infty $. Hence $R^{m}_{k,1\bar{1}1\bar{1} }$ is also convergence for each given $m$, then combining \upshape{(\ref{rkfs})} and \upshape{(\ref{oscillationvarphi})} we deduce that
\begin{eqnarray*}
0 & = & \liminf_{k\to\infty} \left\Vert  R_{k, 1\bar{1} 1\bar{1}} + \frac{64 k^4}{\pi }   \varphi_k  - R_{FS, 1\bar{1} 1\bar{1}}  \right\Vert_{L^1 (M)} \\
& = & \liminf_{m,k\to\infty} \left\Vert  R^{m}_{k, 1\bar{1} 1\bar{1}} + \frac{64 k^4}{\pi }   \varphi_k  - R_{FS, 1\bar{1} 1\bar{1}}  \right\Vert_{L^1 (M)}    \geq \frac{64 \delta }{\pi } ,
\end{eqnarray*}
contradiction. 

If there is a uniform sequence $b_m $ such that $\lim_{m\to\infty} b_m =0$, and 
\begin{eqnarray*}
\left\Vert  m^{1-n}\rho_{\omega_k ,m} -m - a_1 \right\Vert_{L^1 } \leq b_m  ,
\end{eqnarray*}
for any sufficiently large $k$ and $m$, then it is obvious that $a_1 = \frac{scal}{2}$, where $scal$ is the scalar curvature. Since $g_k $ converges to $g$ in $C^\infty $-topology as $k\to\infty $, $m^{1-n}\rho_{\omega_k ,m} -m$ is also convergence in in $C^\infty $-topology as $k\to\infty $, for each given $m$. It follows that the sequence of scalar curvatures of $g_k$ is convergence in $L^1$ norm as $k\to\infty $, contrary to \upshape{(\ref{oscillationvarphi})}.
\end{exap}

 \appendix
 \section{Holomorphic Norms of K\"ahler Manifolds} 
 \label{HNoKM}

The results in this appendix are essentially obtained by M. Anderson, J. Cheeger, and M. Gromov (\cite{ma1},~\cite{majc1},~\cite{jc1},~\cite{mg1}). In fact, this is just a holomorphic version of harmonic norm. The details of classical $C^{m,\alpha}$-$norm$ and $C^{m,\alpha}$-$convergence$ theory can also be found in Chapter 11 of \cite{pp1}. Now we will give some basic properties of holomorphic norms. The following result may be proved in much the same way as Proposition 11.3.2 in \cite{pp1}, where $  \left\Vert \cdot \right\Vert^{holo}_{C^{m,\alpha} ,r} $ is defined in Definition \ref{holonorm}.

\begin{prop}
Given $(M,g,p)$, $m\geq 0$, $\alpha\in (0,1]$ we have:
\begin{eqnarray}
& & \left\Vert (M,g,p) \right\Vert^{holo}_{C^{m,\alpha} ,r} = \left\Vert \left( M,\lambda^{2}g,p \right) \right\Vert^{holo}_{C^{m,\alpha} ,\lambda r } \textrm{ for all }  \lambda >0.\\
& &\textrm{The function } r\mapsto \left\Vert (M,g,p) \right\Vert^{holo}_{C^{m,\alpha} ,r} \textrm{ is increasing, continuous, and converges to $0$ as } r\to 0. \\ 
& &\textrm{If } \left\Vert (M,g,p) \right\Vert^{holo}_{C^{m,\alpha} ,r} < Q,\textrm{ then for all }x_1, x_2\in B_r (0)\textrm{ we have } \\
& & \quad\quad\quad\quad\quad e^{-Q} \min \left\{ |x_1 - x_2 | ,2r-|x_1 |-|x_2 | \right\} \leq |\phi (x_1 ) \phi (x_2 )| \leq e^Q |x_1 -x_2 | .\nonumber\\
& &\textrm{The norm }\left\Vert (M,g,p) \right\Vert^{holo}_{C^{m,\alpha} ,r} \textrm{ is realized by a }C^{m+1,\alpha }\textrm{-chart.}\\
& &\textrm{If $M$ is compact, then }\left\Vert (M,g) \right\Vert^{holo}_{C^{m,\alpha} ,r} = \left\Vert (M,g,p) \right\Vert^{holo}_{C^{m,\alpha} ,r  }\textrm{ for some }p\in M.
\end{eqnarray}
\end{prop}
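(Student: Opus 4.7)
The plan is to translate the harmonic-norm proof of Proposition~11.3.2 in \cite{pp1} to the holomorphic setting, since biholomorphic charts behave under rescaling, restriction, and limits in exactly the same way as harmonic ones. For (1), given a chart $\phi:B_r(0)\to U$ witnessing $\left\Vert (M,g,p) \right\Vert^{holo}_{C^{m,\alpha},r}\le Q$, the rescaled map $\tilde\phi(z)=\phi(z/\lambda)$ on $B_{\lambda r}(0)$ is biholomorphic, satisfies $|D\tilde\phi|_{\lambda^2 g}=|D\phi|_g$, and preserves the scale-invariant quantity $r^{|I|+\alpha}[D^I g_{i\bar j}]_\alpha$; this forces the two norms to be equal. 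For (2), monotonicity follows because a chart witnessing the norm at scale $r_2$ restricts to one at scale $r_1<r_2$ with the same (or smaller) constant, and continuity is obtained by further restriction combined with a normal-families argument on the family of charts; the decay $\left\Vert (M,g,p) \right\Vert^{holo}_{C^{m,\alpha},r}\to 0$ as $r\to 0$ follows by choosing, near $p$, a holomorphic chart in which $g_{i\bar j}(0)=\delta_{ij}$ (the existence being given by Proposition~\ref{holochart}), so that on $B_r(0)$ both $|g_{i\bar j}-\delta_{ij}|$ and $r^{|I|+\alpha}[D^I g_{i\bar j}]_\alpha$ tend to $0$. Finally (3) is the standard bi-Lipschitz comparison: the upper bound $|\phi(x_1)\phi(x_2)|\le e^Q|x_1-x_2|$ is the $g$-length of the image of the Euclidean segment, while for the lower bound one considers an arbitrary path from $\phi(x_1)$ to $\phi(x_2)$ and splits into the case that it stays inside $\phi(B_r(0))$ (where $|D\phi^{-1}|\le e^Q$ yields the bound $e^{-Q}|x_1-x_2|$) and the case that it must cross $\partial\phi(B_r(0))$ at some $\phi(z)$ with $|z|=r$, where the same estimate applied to the two sub-arcs gives $e^{-Q}(2r-|x_1|-|x_2|)$.

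Item (4), realization of the norm by an actual chart, is where the main work lies. Let $Q_\ast=\left\Vert (M,g,p) \right\Vert^{holo}_{C^{m,\alpha},r}$ and pick biholomorphic charts $\phi_k:B_r(0)\to U_k\subset M$ with constants $Q_k\searrow Q_\ast$. After precomposing with an element of $U(n)$ normalizing $D\phi_k(0)$, the bounds $|D\phi_k|,|D\phi_k^{-1}|\le e^{Q_k}$ combined with the Cauchy estimates for holomorphic maps give uniform $C^{m+1,\alpha}$ control on $\phi_k$ (viewed in any fixed background atlas on $M$) as well as a uniform lower bound on $|\det D\phi_k|$. By Arzel\`a-Ascoli, a subsequence converges in $C^{m+1,\alpha'}$ for every $\alpha'<\alpha$ to a holomorphic map $\phi_\infty:B_r(0)\to M$; the non-degeneracy of the differential, preserved under $C^1$-convergence, forces $\phi_\infty$ to be a biholomorphism onto its image. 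Lower semicontinuity of the H\"older seminorms $[D^I g_{i\bar j}]_\alpha$ under $C^m$-convergence of the charts then shows that $\phi_\infty$ is an admissible chart realizing $Q_\ast$.

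For (5), item (2) applied to varying $p$ yields upper semicontinuity of the pointwise norm: transporting an almost-optimal chart at $p$ along the flow of a smooth local frame produces charts at nearby $p'$ with constants not exceeding $Q+o(1)$ as $p'\to p$. Upper semicontinuity plus compactness of $M$ then guarantee that the supremum over $p$ is attained. The main obstacle is within step (4): guaranteeing that the limit map is genuinely a biholomorphic chart rather than a holomorphic limit with possibly degenerate differential, and that the H\"older seminorm does not jump upward at the limit. Both are handled by the uniform non-degeneracy $|D\phi_k^{-1}|\le e^{Q_k}\le e^{Q_\ast+1}$, which is preserved under $C^1$-convergence, together with the standard lower semicontinuity of $C^{m,\alpha}$ seminorms under $C^m$-convergence.
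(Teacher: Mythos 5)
Your proposal is correct and takes essentially the same route as the paper, which proves this proposition simply by remarking that it "may be proved in much the same way as Proposition 11.3.2 in \cite{pp1}"; your argument is precisely that holomorphic transcription (rescaling invariance, restriction and continuity in $r$, the bi-Lipschitz estimate, a normal-families/Arzel\`a--Ascoli argument for realization, and upper semicontinuity in $p$ plus compactness). Two small patches: in step (4), uniform non-degeneracy of $D\phi_k$ only yields local injectivity of the limit, so global injectivity of $\phi_\infty$ should instead be deduced from the distance estimate of item (3), which passes to the limit and gives $d\left(\phi_\infty(x_1),\phi_\infty(x_2)\right)\ge e^{-Q_\ast}\min\left\{ |x_1-x_2|,\,2r-|x_1|-|x_2| \right\}>0$ for distinct interior points; and for the decay as $r\to 0$ you should not invoke Proposition~\ref{holochart} (whose hypotheses already assume a bound on the holomorphic norm, so the appeal is circular) but merely the elementary fact that any point of a K\"ahler manifold admits holomorphic coordinates with $g_{i\bar j}(0)=\delta_{ij}$ after a linear change.
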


Now we will introduce the $C^{m,\alpha}$-convergence concept for this norm. The classic definition of $C^{m,\alpha}$-convergence in \cite{pp1} is not directly applicable to K\"ahler geometry, because we have to consider the complex structure on the K\"ahler manifold $M$ at the same time. The following example can explain this observation.

\begin{exap}
Pick a cut-functions $\eta =\eta (|z|)\in C^{\infty}_{0} \left( B_1 (0 )  \right) $, such that $\eta (0)=1$ and $0\leq\eta\leq 1$. Let $\left( M_{i},g_{i},p_{i} \right)$ be the constant sequence $\left( \mathbb{C} ,g ,0 \right)$, where the K\"ahler metric $$g_{1\bar{1}} = 1+{\eta\left( 4\left| z \right|\right) + \eta\left( 6\left| z-\sqrt{-1}\right|\right) +\eta\left( 8\left| z-1-2\sqrt{-1}\right| \right) }. $$

Clearly, $\left( M_{i},g_{i},p_{i} \right)$ converges to $\left( \bar{\mathbb{C}} ,g ,0 \right)$ in the classical pointed $C^{m,\alpha}$-topology, $\forall m\geq 0$, but the pointed K\"ahler manifold $\left( \mathbb{C} ,g ,0 \right)$ doesn't isomorphic to $\left( \bar{\mathbb{C}} ,g ,0 \right)$.
\end{exap}

This example tells us that the definition of $C^{m,\alpha}$-convergence needs to be modified when the manifolds are K\"ahler manifolds.

\begin{definition}[Holomorphic $C^{m,\alpha}$-convergence]
A sequence of pointed complete K\"ahler manifolds is said to converge in the pointed $C^{m,\alpha }$-topology, $\left( M_{i},g_{i},J_{i},p_{i} \right)\to \left( M,g,J,p \right)$, if for every $R>0$ we can find a domain $\Omega\supset B_R (p)\subset M$ and $C^{m+1,\alpha }$-embeddings $F_i :\Omega\to M_i$ for large $i$ such that $F_i (p)=p_i $, $F_i (\Omega )\supset B_R \left( p_i \right) $, $F_{i}^* g_i \to g$ and $F_{i}^* J_i \to J$ in the $C^{m,\alpha  }$-topology.
\end{definition}

We can now state and prove the Arzela-Ascoli type theorem on the K\"ahler manifolds.

\begin{thm}
For given $Q>0$, $n\geq 1$, $m\geq 0$, $\alpha\in (0,1]$, and $r>0$ consider the class $\mclass$ of complete, pointed K\"ahler n-manifolds $(M,g,p)$ with $\left\Vert (M,g) \right\Vert^{holo}_{C^{m,\alpha} ,r} \leq Q$. The class $\mclass $ is compact in the pointed $C^{m,\beta }$-topology for all $\beta < \alpha$.
\end{thm}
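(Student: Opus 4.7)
The plan is to adapt the classical Cheeger-Gromov-Anderson compactness argument for the harmonic $C^{m,\alpha}$-norm, with harmonic charts replaced by the holomorphic charts of Definition \ref{holonorm} and the extra feature that transition functions between our charts are holomorphic. Given a sequence $(M_i, g_i, J_i, p_i) \in \mclass$ and a fixed $R>0$, the hypothesis supplies at every $x \in B_R(p_i)$ a biholomorphic chart $\phi_{i,x} : B_r(0) \to M_i$ satisfying the conditions of Definition \ref{holonorm}. The bi-Lipschitz estimate from the previous proposition provides both an upper diameter bound and a lower volume bound for the image of $B_{r_0}(0)$ with a uniform $r_0 = r_0(Q,r) > 0$, so a standard packing argument covers $B_R(p_i)$ by at most $N = N(n,Q,r,R)$ such holomorphic charts with uniformly bounded overlap multiplicity.

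On each chart the metric components $g^{i,\alpha}_{k\bar l}$ are bounded in $C^{m,\alpha}$, so Arzel\`a-Ascoli together with a diagonal extraction over countably many charts at integer radii yields a subsequence whose metric components converge in $C^{m,\beta}$ for every $\beta<\alpha$. The transition maps $\tau_{i,\alpha\beta} = \phi_{i,\beta}^{-1}\circ\phi_{i,\alpha}$ are holomorphic on their overlaps and uniformly bi-Lipschitz, so Cauchy's integral formula gives uniform bounds on all their derivatives on compact subsets; passing to a further subsequence we obtain $C^\infty$-convergence to holomorphic limits $\tau_{\infty,\alpha\beta}$, and the cocycle condition passes to the limit. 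Gluing the model balls via these limit transition functions produces a complex manifold $M_\infty$ with complex structure $J_\infty$, and the limit metric components define a K\"ahler metric $g_\infty$ of class $C^{m,\beta}$ (closedness of $\omega$ survives $C^1$-convergence of forms). A distinguished chart around $p_i$ furnishes a base point $p_\infty \in M_\infty$.

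Inverting the limit charts produces maps $F_i : \Omega_R \to M_i$ defined on a neighborhood of $B_R(p_\infty)$ which are diffeomorphisms onto domains containing $B_R(p_i)$ and satisfy $F_i^* g_i \to g_\infty$ and $F_i^* J_i \to J_\infty$ in the $C^{m,\beta}$-topology on $\Omega_R$. Sending $R\to\infty$ through a standard diagonal argument yields the required pointed $C^{m,\beta}$-convergence. Completeness of $(M_\infty,g_\infty)$ is inherited from the $(M_i,g_i)$ because the $F_i$ become almost isometric on larger and larger balls, and lower semicontinuity of the holomorphic norm under this convergence shows $(M_\infty,g_\infty,p_\infty) \in \mclass$.

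The main obstacle is ensuring that the limit transition functions $\tau_{\infty,\alpha\beta}$ are honest biholomorphisms, so that $M_\infty$ is a well-defined complex manifold rather than a topological pre-limit. Injectivity follows from the uniform lower bound in the bi-Lipschitz estimate satisfied by each $\tau_{i,\alpha\beta}$, and invertibility of the Jacobian is guaranteed by $|D\phi^{-1}|\leq e^Q$, which forces $|\det D\tau_{i,\alpha\beta}|$ to remain bounded away from zero; both bounds pass to the $C^\infty$-limit. Hausdorffness of $M_\infty$ then follows by observing that points coming from distinct charts which stay at a definite distance apart in each $M_i$ remain so in the limit.
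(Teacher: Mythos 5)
Your proposal is correct and follows essentially the same route as the paper, which simply invokes the classical Cheeger--Gromov--Anderson convergence argument (Theorem 11.3.6 in Petersen's book) modified so that the charts are holomorphic and limits of holomorphic transition maps remain holomorphic. Your write-up just makes explicit the chart-covering, Arzel\`a--Ascoli, and transition-function limit steps that the paper leaves to the cited reference.
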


\begin{proof}
This theorem can be proved by modifying the proof of Theorem 11.3.6 in \cite{pp1}, since the limit of a sequence of holomorphic functions is also holomorphic.
\end{proof}

\begin{rmk}
We can also define the holomorphic $W^{m,p }$-norm of K\"ahler manifolds. When $ mp>n $, we can obtain similar propositions about $W^{m,p }$-norm. See \cite{pp2} for details.
\end{rmk}

Then we will prove a result about the continuity of holomorphic $C^{m,\alpha }$-norm.

\begin{prop}
Suppose $\left( M_i ,g_i ,J_i ,p_i \right)\to (M,g,J,p)$ in $C^{m,\alpha}$, $m\geq 1$, $\alpha >0$, and  the Ricci curvature $Ric(g_i )\geq -c\omega_{g_i} $, for some constant $c>0$ independent of $i$. Then $$\left\Vert \left( M_i ,g_i ,p_i \right) \right\Vert^{holo}_{C^{m,\alpha} ,r} \to\left\Vert (M,g,p) \right\Vert^{holo}_{C^{m,\alpha} ,r} \textrm{for all $r>0$.}$$

Moreover, when all the manifolds have uniformly bounded diameter
$$\left\Vert \left( M_i ,g_i  \right) \right\Vert^{holo}_{C^{m,\alpha} ,r} \to\left\Vert (M,g) \right\Vert^{holo}_{C^{m,\alpha} ,r} \textrm{for all $r>0$.}$$
\end{prop}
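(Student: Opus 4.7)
The statement is the holomorphic analogue of the Anderson–Cheeger–Gromov continuity result for the harmonic Cheeger–Gromov norm (see the corresponding discussion in \cite{pp1}). The new ingredient, compared with the harmonic case, is that the charts in Definition~\ref{holonorm} are required to be biholomorphic, so the natural constructions (pull-back and push-forward by the convergence diffeomorphisms $F_i$) yield only almost-holomorphic maps, which must be upgraded to genuinely holomorphic ones. The Ricci lower bound $Ric(g_i)\geq -c\omega_{g_i}$ enters precisely here, through Hörmander's $L^2$-estimate (Proposition~\ref{l2m}). The plan is to prove the two one-sided inequalities $\liminf_i\|(M_i,g_i,p_i)\|^{holo}_{C^{m,\alpha},r}\geq\|(M,g,p)\|^{holo}_{C^{m,\alpha},r}$ and $\limsup_i\|(M_i,g_i,p_i)\|^{holo}_{C^{m,\alpha},r}\leq\|(M,g,p)\|^{holo}_{C^{m,\alpha},r}$ separately, and then deduce the global statement under bounded diameter by a compactness argument.

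For the first inequality, pass to a subsequence with $\|(M_i,g_i,p_i)\|^{holo}_{C^{m,\alpha},r}\to Q$ and pick biholomorphic charts $\phi_i:B_r(0)\to M_i$ realizing the respective norms up to $1/i$. Define $\phi_i':=F_i^{-1}\circ\phi_i$, which takes values in $M$ once the image lies in $F_i(\Omega)$ (true for large $i$ by nesting). The $\phi_i'$ are uniformly bounded in $C^{m+1,\alpha}$, so Arzelà–Ascoli extracts a $C^{m+1,\beta}$-limit $\phi_\infty$ for each $\beta<\alpha$. Since each $\phi_i$ is $(J_0,J_i)$-holomorphic and $F_i^\ast J_i\to J$ in $C^{m,\alpha}$, the differential of $\phi_\infty$ intertwines $J_0$ and $J$, so $\phi_\infty$ is holomorphic. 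Passing to the limit in the bounds of Definition~\ref{holonorm} for $\phi_i'$ gives $\|(M,g,p)\|^{holo}_{C^{m,\alpha},r}\leq Q$.

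For the reverse inequality, fix $\varepsilon>0$ and an optimal chart $\phi:B_r(0)\to M$ with norm at most $\|(M,g,p)\|^{holo}_{C^{m,\alpha},r}+\varepsilon$. Form $\psi_i:=F_i\circ\phi: B_r(0)\to M_i$; since $F_i^\ast J_i\to J$ in $C^{m,\alpha}$, the $(0,1)$-part $\bar\partial_{J_i}\psi_i$, viewed locally in $J_i$-holomorphic coordinates on $M_i$ (which exist by Proposition~\ref{holochart} applied to $M_i$ once a uniform $C^{1,\alpha}$-bound is available), is a $(0,1)$-form whose $C^{m-1,\alpha}$-norm tends to zero. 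Apply Proposition~\ref{l2m} on a Stein polydisc containing the image, with plurisubharmonic weight $\psi=(c+1)|z|^2$ chosen so that $\sqrt{-1}\partial\bar\partial\psi + Ric(\omega_{g_i})\geq\omega_{g_i}$ uniformly in $i$, to solve $\bar\partial u_i=\bar\partial_{J_i}\psi_i$ with $u_i$ small in $L^2$; interior Schauder estimates then give $\|u_i\|_{C^{m+1,\alpha}}\to 0$ on a slightly smaller ball. The corrected map $\tilde\psi_i:=\psi_i-u_i$ is $(J_0,J_i)$-holomorphic, still a diffeomorphism for large $i$, and inherits the derivative bounds of $\phi$ up to an $o(1)$ correction, giving $\|(M_i,g_i,p_i)\|^{holo}_{C^{m,\alpha},r}\leq\|(M,g,p)\|^{holo}_{C^{m,\alpha},r}+2\varepsilon$ eventually.

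The hard part is the correction step in the third paragraph: producing genuine biholomorphic charts close to the almost-holomorphic pushforwards $\psi_i$, uniformly in $i$. One must work on a Stein domain slightly larger than the image, choose the weight so that the curvature hypothesis of Proposition~\ref{l2m} holds with a constant $\gamma>0$ independent of $i$, and carefully account for the shrinkage of radius when promoting the $L^2$-bound on $u_i$ to a $C^{m+1,\alpha}$-bound on a smaller ball. Finally, the global statement under uniformly bounded diameter follows from the pointwise version: the supremum defining $\|(M_i,g_i)\|^{holo}_{C^{m,\alpha},r}$ is attained at some $p_i\in M_i$; after passing to a subsequence we may assume $p_i\to p\in M$, and the pointwise continuity at $p$ together with continuity of the norm in the basepoint (a consequence of the triangle-type properties built into Definition~\ref{holonorm}) yields the claim.
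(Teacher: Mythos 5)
Your overall strategy is the paper's: prove the $\liminf$ inequality by extracting a limit of optimal charts (your second paragraph is exactly the paper's "easy part"), and prove the $\limsup$ inequality by pushing a slightly larger chart of $M$ forward to $M_i$ and correcting the resulting almost-holomorphic data with H\"ormander's $L^2$-estimate, the Ricci lower bound entering through the curvature hypothesis of Proposition~\ref{l2m}, followed by elliptic estimates to upgrade $L^2$-smallness of the correction. However, the key correction step as you formulate it has a genuine gap. You propose to correct the forward map $\psi_i=F_i\circ\phi:B_r(0)\to M_i$, and to make sense of $\bar{\partial}u_i=\bar{\partial}_{J_i}\psi_i$ you invoke "$J_i$-holomorphic coordinates on $M_i$ \dots by Proposition~\ref{holochart} applied to $M_i$ once a uniform $C^{1,\alpha}$-bound is available." This is circular: Proposition~\ref{holochart} presupposes a bound on $\left\Vert (M_i,g_i,x)\right\Vert^{holo}_{C^{1,\alpha},r}$, which is precisely the quantity the $\limsup$ inequality is supposed to control, and no such uniform bound follows from the hypotheses ($C^{m,\alpha}$-convergence only produces almost-holomorphic charts, and $\Ric\geq -c$ gives no chart control by itself). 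Relatedly, your setup is ambiguous about where the $\bar{\partial}$-problem lives: if it is posed on $B_r(0)$ with the flat metric, the stated weight inequality $\sqrt{-1}\partial\bar{\partial}\psi+\Ric(\omega_{g_i})\geq\omega_{g_i}$ is out of place and the unknown is valued in $\psi_i^{*}T^{1,0}M_i$, again requiring uniform target data; if it is posed on a domain in $M_i$, the unknown should be a scalar function there, not a correction of the forward map. You also assert a "Stein polydisc containing the image" rather than proving Steinness.

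The paper resolves exactly these points by reversing the direction of the correction: it works with the scalar functions $z_{i,k}=z_k\circ\varphi^{-1}\circ F_i^{-1}$ on $U_i=F_i\left(\varphi\left(B_{r+\epsilon}(0)\right)\right)\subset M_i$, so the $\bar{\partial}$-equation is for $\mathbb{C}$-valued functions on $(U_i,g_i,J_i)$ and no coordinates on a target are needed. The convergence $F_i^{*}J_i\to J$ in $C^{m,\alpha}$ (here $m\geq1$ is used) shows that $\partial\bar{\partial}\sum_k|z_{i,k}|^2$ converges to the flat K\"ahler form; this single computation both proves that $U_i$ is Stein for large $i$ and, combined with $\Ric(g_i)\geq -c\,\omega_{g_i}$, verifies the hypothesis of Proposition~\ref{l2m} for the weight $\psi_i=10e^{Q}\sum_k|z_{i,k}|^2$, uniformly in $i$. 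H\"ormander then gives $u_{i,k}$ with $w_{i,k}=z_{i,k}-u_{i,k}$ holomorphic and $u_{i,k}$ small in weighted $L^2$; the upgrade to $C^{m+1,\alpha}$-smallness goes through an intermediate $C^0$-bound by Moser iteration before Schauder (your proposal jumps from $L^2$ directly to Schauder, which also needs this intermediate step, though that is minor). If you recast your third paragraph along these lines — correct the inverse-chart components on $U_i$, prove Steinness and the weight inequality from the $J_i$-convergence, and insert the $L^2\to C^0\to C^{m+1,\alpha}$ chain — your argument coincides with the paper's; the $\liminf$ direction and your bounded-diameter reduction are fine as written.
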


\begin{proof}
First, we show the easy part: $$\liminf_{i\to\infty } \left\Vert \left( M_i ,g_i ,p_i \right) \right\Vert^{holo}_{C^{m,\alpha} ,r} \geq\left\Vert (M,g,p) \right\Vert^{holo}_{C^{m,\alpha} ,r}.$$

For each $Q>\liminf \left\Vert \left( M_i ,g_i ,p_i \right) \right\Vert^{holo}_{C^{m,\alpha} ,r}$, we can find holomorphic charts $\phi_{i} : B_r (0) \to M_{i}$ with the requisite properties for sufficiently large $i>0$. After passing to a subsequence, we can make these charts converge to a holomorphic chart
$$\phi=\lim F^{-1}_{i}\circ \phi_{i} : B_r (0)\to M.$$
Since the metrics and complex structure converge in $C^{m,\alpha}$, $\phi$ must be a holomorphic chart, and it follows that $\left\Vert (M,g,p) \right\Vert^{holo}_{C^{m,\alpha} ,r} \leq Q.$

For the reverse inequality
$$\limsup \left\Vert \left( M_i ,g_i ,p_i \right) \right\Vert^{holo}_{C^{m,\alpha} ,r} \leq\left\Vert (M,g,p) \right\Vert^{holo}_{C^{m,\alpha} ,r},$$
select $Q>\left\Vert (M,g,p) \right\Vert^{holo}_{C^{m,\alpha} ,r} .$ Then we can find $\epsilon >0$ such that also $\left\Vert (M,g,p) \right\Vert^{holo}_{C^{m,\alpha} ,r+2\epsilon } <Q.$ Choose $\varphi :B_{r+3\epsilon } (0 )\to U\in M$ satisfying the usual conditions, and let 

\begin{eqnarray*}
U_{i} & = & F_{i} \left( \varphi \left( B_{ r+\epsilon } (0 ) \right) \right), \\
V_{i} & = & F_{i} \left( \varphi \left( B_{ r+\epsilon } (0 ) \right) \right).
\end{eqnarray*}

Obviously, we can assume that $U_{i}$, $V_{i}$ are domains with smooth boundaries 
\begin{eqnarray*}
\partial U_{i} & = & F_{i} \left( \varphi \left( \partial B_{ r+\epsilon } (0 ) \right) \right) , \\
\partial V_{i} & = & F_{i} \left( \varphi \left( \partial B_{ r } (0 ) \right) \right).
\end{eqnarray*}
for each $i\in\mathbb{N}$.

Consider the coordinates $z_{1},...,z_{n}$ of $B_{r+3\epsilon} (0 )\subset \mathbb{C}^{n}$, and denote the function $ z_k\circ\varphi^{-1}\circ F^{-1}_{i}$ on $F_{i} \left( \varphi \left( B_{r+2\epsilon} (0 ) \right) \right) $ as $z_{i,k}$. 

Since $2\partialbar z_{i,k} = \left( J_{i}+\sqrt{-1} \right) dz_{i,k}$ and $\partialbar^{2} =0$ on $M_i$, we can conclude that
\begin{eqnarray*}
2\partial\partialbar \left( \sum_{k=1}^{n} \left| z_{i,k}\right|^{2} \right) & = & \sum_{k=1}^{n} d\left[ z_{i,k} \left( J_{i}+\sqrt{-1} \right) d\zbar_{i,k} +\zbar_{i,k} \left( J_{i}+\sqrt{-1} \right) dz_{i,k} \right] \\
& = &\sum_{k=1}^{n}  dz_{i,k} \wedge  \left( J_{i}+\sqrt{-1} \right) d\zbar_{i,k} + \sum_{k=1}^{n} z_{i,k}  d \left(  J_{i} d\zbar_{i,k} \right)\\
& & +\sum_{k=1}^{n}  d\zbar_{i,k} \wedge  \left( J_{i}+\sqrt{-1} \right) dz_{i,k} + \sum_{k=1}^{n} \zbar_{i,k}  d \left(  J_{i} dz_{i,k} \right).
\end{eqnarray*}

Note that the above formula only contains the first-order derivative of $J_{i}$ at most. By the definition of holomorphic $C^{m,\alpha }$-convergence, we have \begin{eqnarray}\label{eqn1}
lim_{i\to\infty}\left\Vert \varphi^{*}\left[ F^{*}_{i} \partial\partialbar \left( \sum_{k=1}^{n} \left| z_{i,k}\right|^{2} \right) \right] -\sum_{k=1}^{n} dz_{k}\wedge d\zbar_{k} \right\Vert_{C^{m-1,\alpha} \left( B_{r+2\epsilon} (0) \right) } =0 .
\end{eqnarray}

It shows that $U_{i}$ must be Stein manifold for sufficiently large $i>0$. We now plan to use the $L^2$ method to find coordinates $w_{i,k}$ on $U_{i}$ that are close to $z_{i,k}$.

Apply an argument similar as above, we can obtain
$$\lim_{i\to\infty}\left\Vert  \partialbar z_{i,k}   \right\Vert_{C^{m,\alpha} (U_{i})} =0 .$$

Combining $Ric(g_i )\geq -c\omega_{g_i} $ with (\ref{eqn1}), we can assert that $$10e^{Q} \partial\partialbar \left( \sum_{k=1}^{n} \left| z_{i,k}\right|^{2} \right) -Ric(g_{i}) \geq \omega_{g_{i}} ,$$
for sufficiently large $i>0$.

Pick weight function $\psi_i = 10e^{Q} \sum_{k=1}^{n} \left| z_{i,k}\right|^{2} $ on such $U_{i}$. Proposition \ref{l2m} now gives a $C^{m+1,\alpha }$ function $u_{i,k}$ such that $\partialbar (u_{i,k} - z_{i,k})=0$, and $$\int_{U_{i}} |u_{i,k} |^{2} e^{-\psi_i } dV_{g_i} \leq \int_{U_i} |\partialbar z_{i,k} |^{2} e^{-\psi_i} dV_{g_i} .$$ 

Let $w_{i,k}= z_{i,k} - u_{i,k} $. The proof is completed by showing that $$\lim_{i\to\infty}\left\Vert u_{i,k} \circ F_i \right\Vert_{C^{m+1,\alpha} (B(0,r) )}\to 0 .$$

Using the K\"ahler conditions on $\left( M_i ,g_i ,J_i \right)$, we can conclude that
\begin{eqnarray*}
\lim_{i\to\infty}\left\Vert  \Delta_{g_i} z_{i,k}   \right\Vert_{C^{m-1,\alpha} (U_{i})} & \leq &  \lim_{i\to\infty}\left\Vert  \partialbar z_{i,k}   \right\Vert_{C^{m,\alpha} (U_{i})} =0.
\end{eqnarray*}

By Moser iteration, $\lim_{i\to\infty}\left\Vert z_{i,k}   \right\Vert_{C^{0} (V_{i})} = 0$. According to the Schauder's elliptic estimate, we have
\begin{eqnarray*}
\lim_{i\to\infty}\left\Vert  z_{i,k}   \right\Vert_{C^{m+1,\alpha} (V_{i})} & \leq & \lim_{i\to\infty}C_{1} \left\Vert  \Delta_{g_i} z_{i,k}   \right\Vert_{C^{m-1,\alpha} (V_{i})}
+ \lim_{i\to\infty}C_{2} \left\Vert   z_{i,k}   \right\Vert_{C^{0} (V_{i})} =0,
\end{eqnarray*}
where $C_{1}$, $C_{2}$ are constants independent of $i$, which completes the proof.
\end{proof}

As an application, we will construct $C^{1,\alpha} $-bounded holomorphic chart here.

\begin{lmm}
\label{c1alphaest}
Let $\alpha\in (0,1)$, $n\geq 1$, $K>0$ and $R>0$ be given. For every $Q>0$, there is an $r=r(n,\alpha ,K,R )>0$ such that if the K\"ahler $n$-manifold $(M,g)$ satisfies
\begin{eqnarray*}
\sup_{B_{1} (x) } |Ric| &\leq & K ,\\
inj(p) & \geq & R,\; \forall p\in B_{1} (x),
\end{eqnarray*}
then $\left\Vert (M,g,x) \right\Vert^{holo}_{C^{1,\alpha} ,r}  \leq Q .$
\end{lmm}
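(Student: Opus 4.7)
The plan is to argue by contradiction, relying on Anderson's $C^{1,\alpha}$-compactness theorem for Riemannian manifolds with two-sided Ricci bound and injectivity radius bound, together with the continuity of the holomorphic norm established in the preceding proposition.

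Suppose the conclusion fails. Then there exist $\alpha\in(0,1)$, $K,R,Q>0$ and a sequence of pointed K\"ahler $n$-manifolds $(M_i,g_i,J_i,x_i)$ with $\sup_{B_1(x_i)}|\Ric(g_i)|\leq K$ and $\mathrm{inj}(p)\geq R$ for every $p\in B_1(x_i)$, yet $\|(M_i,g_i,x_i)\|^{holo}_{C^{1,\alpha},1/i}>Q$. By Anderson's theorem, the bounds on $\Ric$ and $\mathrm{inj}$ give uniform harmonic-coordinate charts of definite size on $B_{1/2}(x_i)$ in which $g_i$ is bounded in $C^{1,\beta}$ for every $\beta<1$; since $(M_i,g_i,J_i)$ is K\"ahler, the relation $\nabla_{g_i}J_i=0$ forces $J_i$ to be $C^{1,\beta}$ in these charts as well. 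Fix some $\beta\in(\alpha,1)$. After passing to a subsequence we obtain pointed $C^{1,\beta}$-convergence
$$(M_i,g_i,J_i,x_i)\longrightarrow(M_\infty,g_\infty,J_\infty,x_\infty),$$
where the limit is a (possibly incomplete but locally well-defined) K\"ahler structure of class $C^{1,\beta}$, with $J_\infty$ integrable as a $C^{1,\beta}$-limit of integrable almost complex structures.

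Next I would build a holomorphic chart on the limit at $x_\infty$ with a sharp bound on $\|(M_\infty,g_\infty,x_\infty)\|^{holo}_{C^{1,\alpha},r_0}$. Starting from the limiting harmonic chart, in which $J_\infty\in C^{1,\beta}$, one produces $n$ approximately holomorphic coordinates $\tilde z_k$ (e.g.\ by projecting the harmonic coordinates onto the $(1,0)$-part), and then solves the $\bar\partial$-equation $\bar\partial u_k=\bar\partial\tilde z_k$ on a small ball by H\"ormander's $L^2$-estimate (Proposition~\ref{l2m}), exactly as in the proof of Proposition~\ref{holochart} and in the last part of the proof of the continuity proposition just above. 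Schauder interior estimates, combined with the fact that $\bar\partial\tilde z_k$ is arbitrarily small in $C^{0,\alpha}$ on a shrinking ball, yield a biholomorphic chart $(z_1,\dots,z_n)$ near $x_\infty$ of some definite radius $r_0=r_0(n,\alpha,K,R)$ with $\|(M_\infty,g_\infty,x_\infty)\|^{holo}_{C^{1,\alpha},r_0}<Q/2$.

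To conclude, apply the continuity of the holomorphic norm proved just before this lemma: since $\Ric(g_i)\geq-K\omega_{g_i}$ and the convergence is in $C^{1,\alpha}$ (as $\beta>\alpha$), one gets
$$\|(M_i,g_i,x_i)\|^{holo}_{C^{1,\alpha},r_0}\;\longrightarrow\;\|(M_\infty,g_\infty,x_\infty)\|^{holo}_{C^{1,\alpha},r_0}<Q/2.$$
Then for all $i$ large enough with $1/i<r_0$, monotonicity of the holomorphic norm in $r$ gives
$$\|(M_i,g_i,x_i)\|^{holo}_{C^{1,\alpha},1/i}\;\leq\;\|(M_i,g_i,x_i)\|^{holo}_{C^{1,\alpha},r_0}\;<\;Q,$$
contradicting the choice of the sequence.

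The main obstacle I expect is the construction of the holomorphic chart on the $C^{1,\beta}$ limit in Step~3: one must carry out the H\"ormander solve-and-correct argument in a regularity class where $g_\infty$ and $J_\infty$ are only $C^{1,\beta}$, and verify that the resulting holomorphic coordinates satisfy the pointwise and H\"older bounds required by Definition~\ref{holonorm} with a quantitative radius depending only on $(n,\alpha,K,R)$. Once this is granted, the contradiction closes immediately via the continuity proposition.
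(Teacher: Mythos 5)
Your overall strategy (argue by contradiction, use Anderson's harmonic-coordinate compactness under $|\mathrm{Ric}|\leq K$ and $inj\geq R$, then transfer a chart bound back to the sequence via the continuity of the holomorphic norm and its monotonicity in $r$) is coherent, and it differs from the paper's route: the paper simply reruns Anderson's Lemma 2.2 with the harmonic norm replaced by the holomorphic norm, i.e.\ a rescaling (blow-up) contradiction in which the rescaled limit is flat $\mathbb{C}^n$, whose norm is zero. However, your Step 3 contains a genuine gap, and it is precisely the point you flagged. To build a holomorphic chart on the limit $(M_\infty ,g_\infty ,J_\infty ,x_\infty )$ you propose to solve $\partialbar u_k=\partialbar \tilde{z}_k$ ``by H\"ormander's $L^2$-estimate (Proposition~\ref{l2m})''. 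But Proposition~\ref{l2m} is a statement about K\"ahler manifolds (Stein if noncompact): it presupposes that the underlying space is already a complex manifold, i.e.\ that holomorphic charts exist, and its proof uses the smooth Bochner--Kodaira machinery. Your limit is, a priori, only a manifold carrying a $C^{1,\beta}$ metric and a $C^{1,\beta}$ integrable almost complex structure obtained from harmonic-coordinate compactness; no holomorphic charts are available yet, so invoking Proposition~\ref{l2m} on $M_\infty$ is circular, and an $L^2$ $\partialbar$-theory for a merely $C^{1,\beta}$ integrable $J_\infty$ is not provided anywhere in the paper. What the step really requires is a low-regularity Newlander--Nirenberg theorem (e.g.\ Nijenhuis--Woolf for $C^{1,\beta}$ structures), a substantial external ingredient; everything downstream (finiteness and smallness of $\left\Vert (M_\infty ,g_\infty ,x_\infty )\right\Vert^{holo}_{C^{1,\alpha},r_0}$, hence the contradiction via the continuity proposition and monotonicity in $r$) hinges on it.

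There are two natural ways to close the gap. First, perform the solve-and-correct on the smooth manifolds $M_i$ themselves, where Proposition~\ref{l2m} does apply: transplant to $M_i$ the complex-linear combinations of the limiting harmonic coordinates whose differentials are $(1,0)$ with respect to $J_\infty (x_\infty )$; on a small ball these are approximately $J_i$-holomorphic with $\partialbar$ small in H\"older norm (by the $C^{1,\beta'}$ convergence of $J_i$ and continuity of $J_\infty$), and the H\"ormander correction plus the Moser/Schauder estimates, exactly as in the hard direction of the continuity proposition, produce genuine holomorphic coordinates on $B_{r_0}(x_i)$ satisfying the bounds of Definition~\ref{holonorm}, contradicting $\left\Vert (M_i ,g_i ,x_i )\right\Vert^{holo}_{C^{1,\alpha},1/i}>Q$ without ever constructing a chart on the limit. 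Second, follow the paper's intended argument: using scale invariance and continuity of the norm, normalize so that the norm equals $Q$ at scales $r_i\to 0$, rescale $g_i\mapsto r_i^{-2}g_i$ so the curvature bound tends to $0$ and the injectivity radius to $\infty$; the limit is then flat $\mathbb{C}^n$, where the holomorphic chart is free and the norm vanishes, and continuity of the norm gives the contradiction. Either route avoids, rather than confronts, the difficulty you correctly identified.
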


\begin{proof}
Relacing harmonic norms by holomorphic norms in Lemma 2.2 in \cite{ma1}, we can obtain the proof of this lemma.
\end{proof}

Then we state the Cheeger's lemma.

\begin{lmm}[Cheeger, 1967]
Given $n\geq 2$, $v,K>0$, and an $n$-dimensional Riemannian manifold $(M,g)$ with
\begin{eqnarray*}
 \sup_{B_{1} (x) } |\sec | & \leq & K  ,\\
 \Vol B_1 (x) & \geq & v ,
\end{eqnarray*}
then $inj (x)\geq R$, where $R$ depends only on $n$, $K$, and $v$.
\end{lmm}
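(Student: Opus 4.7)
I plan to argue by contradiction, combining Klingenberg's lemma with rescaling and pointed Cheeger--Gromov compactness. Suppose there is a sequence of pointed $n$-manifolds $(M_k,g_k,x_k)$ satisfying $|\sec_{g_k}|\le K$ on $B_1(x_k)$ and $\Vol B_1(x_k)\ge v$, but with $i_k := \mathrm{inj}(x_k)\to 0$. For $k$ large, $i_k<\pi/\sqrt{K}$, so Klingenberg's lemma supplies a closed geodesic loop $\gamma_k$ at $x_k$ of length $2i_k$. Rescale by $\tilde g_k := i_k^{-2}g_k$: then $|\sec_{\tilde g_k}|\le K i_k^2\to 0$, and $\gamma_k$ becomes a closed geodesic of length $2$ through $x_k$ in $(M_k,\tilde g_k)$.

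Next I extract a pointed limit. Bishop--Gromov volume comparison in the original metric (using only the Ricci lower bound coming from $|\sec|\le K$) together with the hypothesis $\Vol B_1(x_k)\ge v$ yields $\Vol_{g_k}B_r(x_k)\ge c(n,K,v)\,r^n$ for $r\in(0,1]$; after rescaling this becomes $\Vol_{\tilde g_k} B_R(x_k)\ge c(n,K,v)\,R^n$ for $R\in (0,1/i_k]$. Together with the vanishing curvature, the classical pointed Cheeger--Gromov $C^{1,\alpha}$ compactness theorem (the Riemannian analogue of the statements collected earlier in this appendix) produces a subsequential limit $(M_\infty,g_\infty,x_\infty)$ which is a complete flat Riemannian $n$-manifold. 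The loops $\gamma_k$ pass to a closed geodesic of length $2$ through $x_\infty$, so $M_\infty\cong\mathbb{R}^n/\Gamma$ for some non-trivial discrete group of isometries $\Gamma$ (containing a translation of length at most $2$).

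The contradiction comes from volume growth. The rescaled volume estimate passes to the limit, giving $\Vol_{g_\infty}B_R(x_\infty)\ge c(n,K,v)\,R^n$ for every $R>0$. But a non-trivial complete flat quotient of $\mathbb{R}^n$ admitting a translation of length at most $2$ has strictly subeuclidean volume growth: explicitly $\Vol_{g_\infty}B_R(x_\infty)\le C(n)R^{n-1}$ for all $R\ge 2$. Letting $R\to\infty$ forces the required contradiction. The main obstacle I expect is justifying the smooth Cheeger--Gromov step: one must upgrade the injectivity-radius control from the single basepoint $x_k$ to a full neighborhood of $x_k$, which is essentially the statement we are trying to prove. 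The standard way around this mild circularity is to combine Gromov--Hausdorff precompactness (requiring only a Ricci lower bound) with the two-sided sectional curvature bound, and then to propagate the injectivity bound to nearby points by iterating the Klingenberg--Bishop--Gromov argument on the almost-flat rescaled manifolds.
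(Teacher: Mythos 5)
The paper does not actually prove this lemma: it cites page 34 of Cheeger's thesis, i.e.\ the classical quantitative comparison-geometry argument that bounds the length of the shortest geodesic loop from below in terms of $K$ and the volume. Your blow-up/compactness strategy is therefore a genuinely different route, and in outline it is a known viable one; but as written it has a genuine gap exactly at the step you flag, and the patch you sketch does not close it. To extract a pointed $C^{1,\alpha}$ limit of the rescaled manifolds on balls $B_R(x_k)$ with $R\to\infty$ you need a harmonic-radius, hence injectivity-radius, lower bound at \emph{all} points of those balls; your hypotheses control $\mathrm{inj}$ only at the single point $x_k$, and ``propagating the injectivity bound to nearby points by iterating the Klingenberg--Bishop--Gromov argument'' means ruling out short geodesic loops at nearby points using only curvature and volume bounds --- which is precisely Cheeger's lemma at those points. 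The circularity is not mild; it is the entire content of the statement. Two standard non-circular repairs: (i) a point-selection argument: choose $q_k\in B_{1/4}(x_k)$ nearly minimizing $\mathrm{inj}(q)/\left(\tfrac14-d(x_k,q)\right)$ and rescale by $\mathrm{inj}(q_k)^{-2}$; then the rescaled injectivity radius is $\geq \tfrac12$ on balls about $q_k$ of radius tending to infinity, the volume lower bound transfers to $q_k$ by Bishop--Gromov performed inside $B_1(x_k)$ (note the curvature hypothesis is only on $B_1(x_k)$, so all balls used must stay there), and Anderson's $C^{1,\alpha}$ compactness applies; or (ii) drop compactness altogether and use the Cheeger--Gromov--Taylor inequality bounding $\mathrm{inj}(x)$ below by $r$, $K$ and $\Vol B_r(x)$, which is essentially the quantitative route the paper cites.

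Two smaller points. First, the deck transformation produced by the limiting loop need not be a translation (a fixed-point-free screw motion is possible), so ``admits a translation of length at most $2$'' is unjustified; the subeuclidean growth you need still holds, because if $\gamma\neq\mathrm{id}$ has $d(\tilde q,\gamma\tilde q)\leq 2$ then $\#\{k:d(\tilde q,\gamma^k\tilde q)\leq R\}\geq R/2$, and since the translates of (a Dirichlet domain intersected with $B_R(\tilde q)$) are pairwise disjoint and lie in $B_{2R}(\tilde q)$, one gets $\Vol B_R(q_\infty)\leq C(n)R^{n-1}$, restoring the contradiction with $\Vol B_R\geq cR^n$. Second, minor but worth stating in a full write-up: Klingenberg's lemma is applied at the rescaled basepoint (conjugate radius $\to\infty$ since the rescaled curvature tends to $0$), the loops pass to the limit because $C^{1,\alpha}$ convergence of the metrics gives $C^{0,\alpha}$ convergence of the Christoffel symbols, and flatness plus completeness of the limit is what licenses $M_\infty\cong\mathbb{R}^n/\Gamma$.
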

\begin{proof}
The proof can be found on page 34 of \cite{jc1}.
\end{proof}

Combining those two lemmas above:
\begin{coro} 
\label{secvol}
Given $n\geq 1$ and $\alpha\in (0,1)$, $v,K>0$, one can find $r(n,\alpha ,K ,R)>0$ for each $Q>0$ such that if the K\"ahler $n$-manifold $(M,g)$ satisfies
\begin{eqnarray*}
\sup_{B_{1} (x) } |\sec | &\leq & K ,\\
\Vol B_1 (x) & \geq & v,
\end{eqnarray*}
then $\left\Vert (M,g,x) \right\Vert^{holo}_{C^{1,\alpha} ,r}  \leq Q .$
\end{coro}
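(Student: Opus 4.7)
The plan is to combine the two preceding lemmas in the most direct way possible: Cheeger's lemma converts the volume lower bound plus sectional curvature bound into an injectivity radius bound, and then Lemma~\ref{c1alphaest} converts the injectivity radius bound plus a Ricci curvature bound into the desired holomorphic $C^{1,\alpha}$-norm bound.

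More concretely, I would proceed as follows. First, the hypothesis $|\sec|\leq K$ on $B_1(x)$ trivially gives $|\Ric|\leq (2n-1)K$ (or any comparable dimensional constant) on the same ball, so the Ricci hypothesis of Lemma~\ref{c1alphaest} is immediately met with $K' = K'(n,K)$. Second, Cheeger's lemma, applied with the hypotheses $|\sec|\leq K$ and $\Vol B_1(y)\geq v$ that hold for every $y\in B_1(x)$ after a routine adjustment of the ball (one may shrink to $B_{1/2}(x)$ to ensure the hypotheses hold in a neighborhood of each point), produces a constant $R=R(n,K,v)>0$ such that $\mathrm{inj}(y)\geq R$ for every $y\in B_{1/2}(x)$. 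Third, given $Q>0$, apply Lemma~\ref{c1alphaest} with the parameters $n$, $\alpha$, $K'(n,K)$ and $R(n,K,v)$ to extract $r = r(n,\alpha,K,v,Q)>0$ with the property that $\|(M,g,x)\|^{holo}_{C^{1,\alpha},r}\leq Q$.

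There is essentially no obstacle here; the work has been done in the two cited lemmas. The only minor bookkeeping issue is ensuring that the sectional curvature and volume hypotheses, which are stated only at the point $x$, actually propagate to a full neighborhood so that Cheeger's lemma applies uniformly. This is handled by replacing $B_1(x)$ with a slightly smaller concentric ball and using the trivial inclusion $B_{1/2}(y)\subset B_1(x)$ for $y\in B_{1/2}(x)$ together with Bishop-Gromov to transfer the volume lower bound from $x$ to nearby points (absorbing the resulting constant into $v$). After this cosmetic adjustment the chain Cheeger $\Rightarrow$ injectivity radius bound $\Rightarrow$ Lemma~\ref{c1alphaest} closes with no further analysis required.
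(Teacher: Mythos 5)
Your proposal is correct and is essentially the paper's argument: the paper proves this corollary simply by combining Cheeger's lemma (volume plus sectional curvature bounds give an injectivity radius lower bound) with Lemma~\ref{c1alphaest} (Ricci bound plus injectivity radius bound give the holomorphic $C^{1,\alpha}$-norm bound), exactly as you do. The rescaling/ball-shrinking bookkeeping you mention is the standard adjustment and is left implicit in the paper as well.
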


\section{Proof of Lemma \ref{ode}}
\label{pode}

We consider the case $n=1$ at first. In this case, we have $z=re^{\theta\sqrt{-1}}$, and $U=D_\delta (0 )$. Let $h(r) = \int_{\partial D_r (0)} f\cos k\theta d\theta $. Then $\Delta = \frac{\partial^2}{\partial r^2} +\frac{1}{r} \frac{\partial}{\partial r} + \frac{1}{r^2} \frac{\partial^2}{\partial\theta^2} $ shows that
\begin{eqnarray}
h'' + \frac{1}{r} h' - \frac{k^2 h}{r^2} & = & \int_{\partial D_r (0)} \Delta f \cos k\theta d\theta - \frac{1}{r^2} \int_{\partial D_r (0)} \frac{\partial^2 f}{\partial\theta^2} \cos k\theta d\theta - \frac{k^2 h}{r^2} \label{laplace} \\
& = & \int_{\partial D_r (0)} \Delta f \cos k\theta d\theta  . \nonumber
\end{eqnarray}
Likewise, 
\begin{eqnarray}
h'''' + \frac{2}{r} h''' - \frac{2k^2 +1}{r^2} h'' +\frac{2k^2 +1}{r^3} h' +\frac{k^2 ( k^2 -4 )}{r^4} h  = \int_{\partial D_r (0)} \Delta^2 f \cos k\theta d\theta . \label{doblaplace} 
\end{eqnarray}
Define $\varphi (t) = h\left( e^t \right)$, we can rewrite (\ref{laplace}) as
$$ \varphi'' - k^2 \varphi = e^{2t} \int_{\partial D_{e^t} \left( 0 \right)} \Delta f \cos k\theta d\theta ,$$
and hence $$ \left| \varphi'' - k^2 \varphi \right|\leq 16n^2 Ke^{2t} .$$ 

Then $ \left( e^{kt} \varphi' - ke^{kt} \varphi \right)' = e^{kt} \left( \varphi'' - k^2 \varphi \right) $ shows that we can find $a\in \mathbb{R}$ such that 
$$ \left| \varphi' - k \varphi -ae^{-kt} \right|\leq 16n^2 Ke^{2t} , $$
when $t\leq \log \left( \delta \right) $. Similarly, we can find $b\in\mathbb{R} $ such that 
$$ \left| \varphi -\frac{a (1-\delta_{k,0} ) }{k+\delta_{k,0} } e^{-kt} -a\delta_{k,0} t -be^{kt} \right|\leq 16n^2 Ke^{2t} + 16n^2 Kte^{2t} \delta_{k,2}  , $$
when $t\leq \log \left( \delta \right) $.

Assume that $f(0)= \nabla f(0) =0$ now, then we have $a=b=0$ when $k\neq 2$. If $k=2$, we can obtain $a=0 $, and hence $$|b\delta^2 - h(\delta ) |\leq 16n^2 K\delta^2 + 16n^2 K\delta^2 |\log(\delta )| .$$ It follows that $$|h|\leq 16n^2 C_1 r^{2} + 16n^2 Kr^{2} \log(r) \delta_{k,2}  ,\; \forall r\in [0,\delta ] ,$$
where $C_1 =C_1 (n,K,\delta )>0 $ be a constant. This gives $(i)$.

Now we will prove $(ii)$ in the case $n=1$. Rewrite (\ref{doblaplace}) as
$$ \varphi'''' -4\varphi''' -2(k^2 -2)\varphi'' +4k^2 \varphi' +k^2(k^2 -4)\varphi = e^{4t} \int_{\partial D_{e^t} \left( 0 \right)} \Delta^2 f \cos k\theta d\theta .$$

The classical Schauder's estimates shows that $|f|+|\nabla f|+|\nabla^2 f| +|\nabla^3 f| \leq C_2 $ on $\prod_{j=1}^{n} D\left( 0,\frac{\delta }{2n} \right) $, where $C_2 $ is a constant depending on $n$, $K$, and $\delta $. Recall that $\varphi (t) = h\left( e^t \right)$, we can rewrite (\ref{doblaplace}) as
$$ \varphi'''' - 4\varphi''' -2\left( k^2-2 \right) \varphi'' +4k^2 \varphi' +k^2 \left( k^2 -4 \right) \varphi = e^{4t} \int_{\partial D_{e^t} \left( 0 \right)} \Delta^2 f \cos k\theta d\theta ,$$
then we can conclude that $$ \left| \varphi'''' - 4\varphi''' +2\varphi'' +4\varphi' -\varphi \right|\leq 16n^4 Ke^{4t} .$$ 

Argument similar to above implies that
$$ |\varphi (t) | \leq C_3 e^{4t} +C_3 |t|e^{4t} \left( \delta_{k,2} +\delta_{k,4} \right) ,$$
when $t\leq \log(\delta ) -\log(2n) $. It gives $(ii)$ in the case $n=1$. 

When $n>1$, a similar argument gives
\begin{eqnarray*}
& & \left| \int_{\prod_{j=1}^{n}\partial D_{r_j } (0 )} f(z) \cos k\theta_1 d\theta_1 \wedge\cdots\wedge d\theta_n \right| \\
& \leq & 2\pi \left|  \int_{\prod_{j=1}^{n-1}\partial D_{r_j } (0 )} f\left(  z' ,0\right) \cos k\theta_1 d\theta_1 \wedge\cdots\wedge d\theta_{n-1} \right| \\
& & + (2\pi )^{n+2} C_1 r_1^2 ,
\end{eqnarray*}
where $z=\left( z' ,z_n \right) \in U$. 

By induction on $n$, we can reduce $(i)$ to the special case $n=1$, which we have proved.

Apply the same argument again, we can conclude that
 \begin{eqnarray*}
& & \left| \int_{\prod_{j=1}^{n}\partial D_{r_j } (0 )} f(z) \cos k\theta_1 d\theta_1 \wedge\cdots\wedge d\theta_n \right| \\
& \leq & 2\pi \left|  \int_{\prod_{j=1}^{n-1}\partial D_{r_j } (0 )} f\left(  z' ,0\right) \cos k\theta_1 d\theta_1 \wedge\cdots\wedge d\theta_{n-1} \right| \\
& & + 2\pi r^2_n \left|  \int_{\prod_{j=1}^{n-1}\partial D_{r_j } (0 )} \frac{\partial^2 f}{\partial z_n \partial \zbar_n}\left(  z' ,0\right) \cos k\theta_1 d\theta_1 \wedge\cdots\wedge d\theta_{n-1} \right| \\
& & + (2\pi )^{n+2} C_3 r_1^4 .
\end{eqnarray*}

Combine this and the case $n=1$, we obtain $(ii)$ by induction on $n$.
\qed


\end{document}